\documentclass[12pt,reqno]{amsart}
\usepackage[margin=2.0cm]{geometry}

\usepackage{amsmath,amssymb,latexsym,amsthm,amstext}
\usepackage{bbm}
\usepackage[nobysame,alphabetic,initials]{amsrefs}
\usepackage{enumerate}

\usepackage{graphicx}
\usepackage{todonotes}

\usepackage{pgfplots}
\pgfplotsset{compat=1.15}
\usepackage{mathrsfs}
\usetikzlibrary{arrows}
\usetikzlibrary{intersections,angles,quotes}

\usepackage{float}

\usepackage{nicefrac}

\usepackage{hyperref}

\usepackage{orcidlink}
\usepackage{marginnote} 
\usepackage{soul} 
\usepackage{cancel} 

\usepackage{mathtools}
\mathtoolsset{showonlyrefs}

\newtheorem{theorem}{Theorem}[section]

\newtheorem{lemma}[theorem]{Lemma}

\newtheorem{corollary}[theorem]{Corollary}

\theoremstyle{remark}
\newtheorem{case}{Case}

\newcommand{\R}{{\mathbb R}}

\newcommand{\hyp}{{\mathbb H}}
\newcommand{\sph}{{\mathbb S}}
\newcommand{\M}{\mathcal{M}}

\DeclareMathOperator{\arcosh}{\mathrm{arcosh}}

\DeclareMathOperator{\arccot}{\mathrm{arccot}}
\DeclareMathOperator{\diam}{\mathrm{diam}}

\DeclareMathOperator{\per}{per}

\DeclareMathOperator{\area}{\mathrm{area}}
\DeclareMathOperator{\vol}{\mathrm{vol}}

\makeatletter
\DeclareFontFamily{U}{tipa}{}
\DeclareFontShape{U}{tipa}{m}{n}{<->tipa10}{}
\newcommand{\arc@char}{{\usefont{U}{tipa}{m}{n}\symbol{62}}}%

\newcommand{\arc}[1]{\mathpalette\arc@arc{#1}}

\newcommand{\arc@arc}[2]{%
  \sbox0{$\m@th#1#2$}%
  \vbox{
    \hbox{\resizebox{\wd0}{\height}{\arc@char}}
    \nointerlineskip
    \box0
  }%
}
\makeatother

\title[Stability of the ball convex P\'al inequality]{Optimal stability of P\'al's isominwidth inequality for ball convex bodies in planes of constant curvature}

\author[F. Fodor]{Ferenc Fodor$^{\orcidlink{0000-0001-9747-1981}}$}

\address{Bolyai Institute, University of Szeged, Aradi v\'ertan\'uk tere 1, 6720 Szeged, Hungary}

\email{fodorf@math.u-szeged.hu}

\author[\'A. Sagmeister]{Ádám Sagmeister$^{\orcidlink{0000-0002-6863-6481}}$}

\address{\parbox{\linewidth}{Bolyai Institute, University of Szeged, Aradi v\'ertan\'uk tere 1, 6720 Szeged, Hungary\\HUN-REN Alfréd R\'enyi Institute of Mathematics, Re\'altanoda u. 13-15, 1053 Budapest, Hungary}}

\email{sagmeister@server.math.u-szeged.hu, sagmeister.adam@gmail.com}

\subjclass{52A40, 51M09, 51M10, 52A55, 52A10}
\keywords{convex geometry, spherical geometry, hyperbolic geometry, minimal width, thickness, isominwidth inequality, ball bodies, stability, Hausdorff metric, symmetric difference metric}

\date{\today}

\begin{document}

\begin{abstract}
    P\'al's isominwidth inequality (1921) answered the Kakeya needle problem (1917) for convex sets. It states that among convex bodies of fixed minimum width $w$ in the Euclidean plane, the regular triangle has minimal area. The isominwidth inequality was generalized to the $2$-dimensional sphere by Bezdek and Blekherman \cite{Bez00} and Freyer and Sagmeister \cite{FS}. Interestingly, in hyperbolic space, no minimizer exists, as shown by B\"or\"oczky, Freyer and Sagmeister \cite{BoFS}. The stability of the Euclidean Pál inequality with respect to the Hausdorff metric and the symmetric difference metric was proved by Lucardesi and Zucco \cite{LZ24}. Fodor, Robock and Sagmeister \cite{FRS26} proved  $r$-ball convex analogs of the isominwidth inequality in all three constant curvature planes connecting Pál's theorem with the Blaschke--Lebesgue inequality. In this paper, we prove optimal stability versions of this statement with respect to the Hausdorff distance and the symmetric difference metric in all three constant curvature planes. 
\end{abstract}

\maketitle

\section{Introduction}
\label{sec:intro}

The concept of width plays an important role in convex geometry. The width of a convex body $K$ (compact convex set whose interior is nonempty) in $\R^n$ in a given direction $u$ is the distance between its two supporting hyperplanes orthogonal to $u$. The body $K$ has constant width if its width is the same in every direction. The theory of bodies of constant width is a rich topic whose results have a wide range of applications; for more information, we refer to the book by Martini, Montejano and Oliveros \cite{MMO19}. One of the most well-known results about bodies of constant width is the Blaschke--Lebesgue theorem (Blaschke \cite{Bla15}, Lebesgue \cite{Leb14}), which states that the Reuleaux triangle has minimal area among plane convex bodies of fixed constant width $w>0$.  The Blaschke--Lebesgue theorem was extended to the $2$-dimensional sphere $\sph^2$ by Ara\'ujo \cite{A97} and Leichtweiss \cite{Lei05} under some smoothness conditions on the boundary. More recently, K. Bezdek \cite{Bez21} gave a proof in the general case when $w<\frac{\pi}{2}$, and Freyer, Sagmeister \cite{FS} proved the inequality when $w\geq\frac{\pi}{2}$. In \cite{FS}, the equality case was also characterized.
B\"or\"oczky and Sagmeister \cite{BoS22} proved the Blaschke--Lebesgue theorem on the hyperbolic plane $\hyp^2$. 
In the hyperbolic plane the Reuleaux triangle is the area minimizer. In the sphere, the Reuleaux triangle still has a minimal area if the minimal width is less than $\frac{\pi}2$. 
However, if the minimal width exceeds $\frac{\pi}2$, then the minimizer is the polar of a Reuleaux triangle. We note that the Blaschke--Lebesgue problem is open in higher dimensions. Campi, Colesanti and Gronchi \cite{CCG96} proved that, among $3$-dimensional rotationally symmetric convex bodies of constant width, the body that one obtains by rotating a Reuleaux triangle about one of its axes of symmetry minimizes the volume. 
Anciaux and Guilfoyle \cite{AB11} proved some necessary conditions on the boundary of extremal bodies in $\R^3$ with respect to the Blaschke--Lebesgue problem. Anciaux and Georgiou \cite{AG26} have recently established that, among bodies of revolution in $\R^3$ with constant width, the ratio of the volume to the cube of the width is minimal for the rotated copy of a Reuleaux triangle. We refer for further references and for an overview of problems of similar type in $\R^3$ to \cite{CCG96}.

In 1921, P\'al \cite{Pal} proved a theorem using the minimal width that is similar in taste to the Blaschke--Lebesgue theorem. The statement is often referred to as the isominwidth theorem and states that the regular triangle has minimal area among plane convex bodies of a given minimal width $w>0$. The original motivation for P\'al's result was to answer the Kakeya needle problem (1917) for convex sets for which the isominwidth theorem provides the solution.
Although no higher dimensional analog of the isominwidth theorem is known, it was extended to the $2$-dimensional sphere. Similarly to the Blaschke--Lebesgue theorem, the minimizer is the Reuleaux triangle if the minimal width is less than $\pi/2$ (K. Bezdek and Blekherman \cite{Bez00}). If the minimal width exceeds $\pi/2$, then the minimizer is the polar of a Reuleaux triangle (Freyer, Sagmeister \cite{FS}). Interestingly, no minimizer exists in hyperbolic space, as shown by B\"or\"oczky, Freyer and Sagmeister \cite{BoFS}. However, in $\hyp^2$, if the problem is restricted to \emph{horocyclically convex} bodies (also called \emph{horoconvex} or \emph{h-convex}, that is, the intersection of horoballs, which are Euclidean balls touching the unit ball in the Poincar\'e ball model), then the unique minimizer of the area with respect to the width introduced by Lassak \cite{Las23} is the regular horocyclic triangle; see B\"or\"oczky, Freyer, Sagmeister \cite{BoFS}. P\'al's problem is also open in higher dimensions. The only $3$-dimensional result that we are aware of is due to Campi, Colesanti, Gronchi \cite{CCG96}, who proved that among rotationally symmetric convex bodies in $\R^3$ whose minimal width is fixed, the rotation of the regular triangle about one of its axes of symmetry minimizes the volume.  

Since the unique extremizer is known both in the (planar) Blaschke--Lebesgue and P\'al theorems, the natural question of stability arises.  An essentially optimal stability version of the Blaschke--Lebesgue inequality with respect to the Hausdorff metric was proved by B\"or\"oczky and Sagmeister \cite{BoS22} in all three constant curvature planes. The stability of the Euclidean Pál inequality with respect to the Hausdorff metric and the symmetric difference metric was proved by Lucardesi and Zucco \cite{LZ24}. Their results are not only of optimal order, but they have also determined the involved constants. The stability of the horocyclically convex case of P\'al's inequality in $\hyp^2$ was proved by B\"or\"oczky, Freyer, Sagmeister \cite{BoFS}, although with a non-optimal order of magnitude.  Freyer and Sagmeister \cite{FS} obtained stability versions of the spherical P\'al inequality both in the case when $w\leq \frac{\pi}2$ and $w>\frac{\pi}{2}$. While the order is optimal in the first case, it is not optimal in the second case.

A further way to generalize the above statements is to consider the so-called $r$-ball convex bodies. A convex body is $r$-ball convex if it is the intersection of closed balls of radius $r$. The concept of $r$-ball convexity provides a unified framework for the Blaschke--Lebesgue and P\'al problems. For a fixed minimal width $w$, let $r\geq w$ in $\R^2$ and $\hyp^2$, and let $w\leq r<\frac{\pi}{2}$ in $\sph^2$. In $\R^2$, the case $r=w$ provides the Blaschke--Lebesgue inequality, and in the limit as $r\to\infty$ we obtain P\'al's problem. In $\sph^2$, the limit case $r\to\frac{\pi}{2}$ is the spherical version of the P\'al-inequality. In $\hyp^2$, the horocyclic P\'al inequality is obtained in the limit as $r\to\infty$. The $r$-ball convex version of P\'al's isominwidth inequality was proved by M. Bezdek \cite{Bez09} for disk polygons (intersection of a finite number of equal radius closed circular disks) in $\R^2$. The general version in all three constant curvature planes was proved by Fodor, Robock and Sagmeister \cite{FRS26}. The precise formulation is provided in the following theorem.

\begin{theorem}[\cite{FRS26}]
\label{thm:isominwidth:spindleconvex}
Let $K$ be an $r$-ball convex body in $\M^2$ where $\M^2$ stands for one of $\R^2$, $\hyp^2$ or $\sph^2$. Let $w$ denote the minimal width of $K$, and let $w\leq r$ (if $\M^2=\sph^2$, we also assume $r<\frac{\pi}{2}$). Then, we have
$$
\area\left(K\right)\geq\area\left(T_{w,r}\right),
$$
where $T_{w,r}$ is the $r$-ball convex hull of a regular triangle with the same minimal width as $K$. We have equality if and only if $K$ is congruent to $T_{w,r}$.
\end{theorem}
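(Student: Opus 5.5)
The plan is to follow Pál's original strategy, adapted to $r$-ball convexity: first reduce to ``$r$-disk triangles'', then minimize area over that finite-dimensional family. Let $K$ be an $r$-ball convex body in $\M^2$ with minimal width $w\le r$.

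\textbf{Step 1: find a suitable triangle of supporting lines.} Take a minimal width direction of $K$. Since the width is minimal, the two parallel supporting lines (in $\hyp^2$ and $\sph^2$, the lines perpendicular to a common normal segment of length $w$) touch $K$ at points $p,q$ with $d(p,q)=w$, and $pq$ is orthogonal to both lines. By the standard argument (as in Pál's proof, or via Blaschke's selection theorem together with the ball-convex analogue of the Blaschke rolling theorem), one finds three boundary points $a,b,c$ of $K$ whose ``$r$-spindles'' behave well. Specifically, the $r$-ball convex hull
\[
D=\operatorname{conv}_r\{a,b,c\}
\]
satisfies $D\subseteq K$ and still has minimal width at least $w$.

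The cleaner route, which I would actually use, is by compactness and symmetrization. Since the functional is continuous in the Hausdorff metric on $r$-ball convex bodies, a minimizer $K_0$ of the area exists among bodies with minimal width $\ge w$. I would then show that a minimizer must be an $r$-disk triangle, i.e.\ the $r$-hull of three points. The argument is a cutting one: if $K_0$ had more than three extreme ``corner'' arcs, we could cut off a small $r$-cap near a boundary point that is not involved in any width-$w$ chord, keep the width $\ge w$, and reduce the area.

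\textbf{Step 2: minimize over $r$-disk triangles.} Among $r$-hulls of triangles $abc$ with minimal width $\ge w$, the minimal width of $\operatorname{conv}_r\{a,b,c\}$ equals the minimum over the three vertices of the distance from the vertex to the opposite $r$-arc. This reduces the problem to the statement that among such triangles with all three ``heights'' $\ge w$, the area is minimized by the regular one, $T_{w,r}$. I would argue this by a two-step balancing:
\begin{itemize}
\item Fix one vertex and the opposite arc, and move the remaining vertex along the equidistant curve (height $=w$). Using the constant-curvature area formula (Gauss--Bonnet: area equals the angle excess or defect combined with the arc curvature terms), show that the area is monotone toward the symmetric (isosceles) position.
\item Iterate the balancing to reach the equilateral configuration.
\end{itemize}
Strict monotonicity of these steps gives the equality case: $K$ is congruent to $T_{w,r}$.

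\textbf{Main obstacle.} I expect the main difficulty to be Step 1, the reduction to $r$-disk triangles in $\hyp^2$ and $\sph^2$. There width is not translation-invariant, so cutting off caps can change the widths in other directions in uncontrolled ways. I would first try a ball-convex Pál-type argument. Inscribe in $K$ the $r$-hull of the three vertices of a regular-type configuration determined by an incircle-like extremal structure: the largest $T$-shaped body inside $K$ touching its boundary in three points. Then use the monotonicity of width under inclusion for $r$-ball convex sets to get $\area(K)\ge\area(D)\ge\area(T_{w,r})$.

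The other hard calculation is the monotonicity in Step 2, which has to be carried out uniformly in the curvature. I would do this with the trigonometric formulas of each of the three planes separately.
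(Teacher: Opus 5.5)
\textbf{There is a genuine gap in Step 1, and Step 1 carries the whole theorem.}

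\emph{The first variant of Step 1 is false.} You claim there are three boundary points whose $r$-ball convex hull $D\subseteq K$ still has minimal width $\ge w$. This already fails in $\R^2$. Take $K=B(p,\frac w2)$, which is $r$-ball convex with minimal width $w$. Any $D\subseteq K$ with $h_D(u)+h_D(-u)\ge w=h_K(u)+h_K(-u)$ for all $u$ must satisfy $h_D=h_K$, so $D=K$, and $K$ is not an $r$-disk triangle. Your closing paragraph uses monotonicity in the wrong direction for the same reason: $D\subseteq K$ gives $w(D)\le w(K)$, so the triangle case can never be applied to an inscribed $D$.

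\emph{The compactness-and-cutting variant breaks down on the same examples.} In $\R^2$, every body of constant width $w$ (the disk $B(p,\frac w2)$, a Reuleaux pentagon of width $w$, \dots) is $r$-ball convex for all $r\ge w$. No proper subset of such a body has minimal width $\ge w$, so no cap can ever be removed, even though these bodies are not $r$-disk triangles. Hence the implication ``not an $r$-disk triangle $\Rightarrow$ area can be lowered by an admissible cut'' is false. Ruling such bodies out as minimizers needs a global comparison. For $r=w$, where $T_{w,w}$ is the Reuleaux triangle, that comparison is the Blaschke--Lebesgue theorem itself, which your argument would have to reprove.

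\emph{Step 2 is not yet an argument either.} Fixing a vertex and its opposite arc already fixes all three vertices. If you mean sliding one vertex with the opposite side fixed, the other two width constraints are not controlled during the motion. The formula for the minimal width of an $r$-disk triangle is asserted without proof. This is not automatic in $\sph^2$ and $\hyp^2$, where width is defined through lunes and the Leichtweiss-type width respectively.

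\textbf{The missing idea is to work with the incircle instead of reducing to triangles.} This is what the paper does, following \cite{FRS26}. Let $B=B(p,\varrho)$ be the incircle of $K$.

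If $\varrho\ge\frac w2$, then $\area(K)\ge\area(B(p,\frac w2))>\area(T_{w,r})$.

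Otherwise, Lemma~\ref{lem:nonoverlapping} gives a cap-domain $B\cup C_1\cup C_2\cup C_3\subseteq K$ with three congruent, non-overlapping caps whose apexes lie at distance $w-\varrho$ from $p$. Its area does not depend on where the caps sit. So the caps can be rearranged with threefold symmetry, and the symmetric cap-domain contains the $r$-disk hexagon $Q_{w,r,\varrho}$. Two results then give the inequality: the Blaschke-type bound $\varrho\ge\varrho_0$ of Theorem~\ref{thm:Blaschke}, and the area comparison of Lemma~\ref{lem:areaofhexagonsincreases}, with $Q_{w,r,\varrho_0}=T_{w,r}$. The equality case reduces to the equality case of Theorem~\ref{thm:Blaschke}.

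Only the minimal width of $K$ itself is ever used, to bound $\varrho$ and to produce the apexes. The width of an inscribed body is never needed, and this is exactly what avoids the obstruction above.
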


In this paper, we prove optimal stability versions of Theorem~\ref{thm:isominwidth:spindleconvex} with respect to the Hausdorff distance $\delta(\cdot,\cdot)$ and the symmetric difference $\Delta(\cdot,\cdot)$; for precise definitions, see \eqref{def:Hausdorff} and \eqref{def:symmetric-difference}. Our main result is the following stability statement.

\begin{theorem}\label{thm:stab:isominwidth:spindleconvex}
Let $K$ be an $r$-ball convex body in $\M^2$ of minimal width $w$, and let $w\leq r$ (in the case $\M^2=\sph^2$ we also assume $r<\frac{\pi}{2}$). Assume
$$
\area\left(K\right)\leq\area\left(T_{w,r}\right)+\varepsilon
$$
for any $\varepsilon\in(0,\overline{\varepsilon})$ where $\overline{\varepsilon}$ is sufficiently small. Then, there are a positive constant $c_\delta$ and an isometry $\phi\colon\M^2\to\M^2$ such that
$$
\delta(K,\phi T_{w,r})\leq c_\delta\varepsilon.
$$
\end{theorem}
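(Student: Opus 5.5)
We will follow the structure of the proof of Theorem~\ref{thm:isominwidth:spindleconvex} in \cite{FRS26} and make each step quantitative. That proof reduces to the case where $K$ is an $r$-disk triangle, i.e.\ the $r$-ball convex hull of three points, with minimal width exactly $w$. The reduction uses two ingredients. First, a minimal-width chord is realized between a vertex and a supporting arc, or between parallel-type supports. Second, the area of an $r$-ball convex body is at least the area of the $r$-ball convex hull of a suitable inscribed triangle whose minimal width is still at least $w$; then a monotonicity argument shrinks this triangle to $T_{w,r}$.

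\textbf{Step 1: Hausdorff control of $K$ by an inscribed $r$-disk triangle.} We pick three boundary points $x_1,x_2,x_3$ of $K$, chosen as in \cite{FRS26}, and let $D=\mathrm{conv}_r\{x_1,x_2,x_3\}\subseteq K$. The key quantitative lemma is a lower bound
$$
\area(K)-\area(D)\ \geq\ c_1\,\delta(K,D).
$$
This is the step that produces the linear order. Suppose a point $y\in K$ lies at distance $t$ from $D$. By $r$-ball convexity, $K$ contains $\mathrm{conv}_r(D\cup\{y\})$. The added region is a cap-like set whose base has length bounded below in terms of $w$, since the sides of $D$ are long once the minimal width is $w$, and whose height is $t$. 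Its area is therefore at least $c\,t$, with $c$ depending only on $w$, $r$ and the curvature. Here the argument is linear rather than quadratic because the base does not shrink with $t$.

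\textbf{Step 2: stability of the triangle inequality.} Next we show $\area(D)\geq \area(T_{w,r})+c_2\,\eta(D)$, where $\eta(D)$ is the Hausdorff distance of $D$ from the nearest congruent copy of $T_{w,r}$. Since $r$-disk triangles are parametrized by three side lengths (a compact finite-dimensional family near $T_{w,r}$), this becomes a calculus statement. We must show that the area, restricted to the constraint set $\{\text{minimal width}\geq w\}$, increases at least linearly as we move away from the equilateral configuration.

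At $T_{w,r}$ all three heights are active constraints and are equal to $w$. Moving along the boundary of the constraint set, the relevant quantities change to first order: the area, and the widths attached to the vertices, which are boundary-distance quantities. The monotonicity argument of \cite{FRS26} gives strict first-order increase in every feasible direction. Since the feasible cone is closed and its directions are compact, we obtain a uniform positive directional derivative. This yields the linear bound near $T_{w,r}$. Away from $T_{w,r}$, compactness and the uniqueness of equality in Theorem~\ref{thm:isominwidth:spindleconvex} give a positive gap, which is handled by taking $\overline{\varepsilon}$ small. Here we should invoke the first-variation computations of \cite{FRS26}, or redo them separately in $\R^2$, $\hyp^2$ and $\sph^2$ with the respective trigonometry.

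\textbf{Step 3: combination.} With $\varepsilon\geq \area(K)-\area(T_{w,r})$ we get both $\delta(K,D)\leq \varepsilon/c_1$ and $\eta(D)\leq\varepsilon/c_2$. The triangle inequality for $\delta$ then gives the claim with $c_\delta=1/c_1+1/c_2$.

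\textbf{Main obstacle.} The main obstacle is Step 2. The minimizer lies at a corner of the constraint set, not an interior critical point, and this is exactly why the stability is linear rather than quadratic. To prove it we must verify that no feasible direction is tangent to a level set of the area. If the first-variation argument degenerates for some direction, we would instead track the three vertex-to-opposite-arc distances and express the area as a function of them.
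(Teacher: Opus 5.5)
Your argument has a genuine gap at Step~1, and it propagates to the whole scheme.

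\textbf{The inscribed triangle need not exist.} Everything rests on an inscribed $r$-disk triangle $D=\mathrm{conv}_r\{x_1,x_2,x_3\}\subseteq K$ whose minimal width is still at least $w$. You use this in Step~1 to get long sides, and hence a linear area gain. More importantly, Step~2 applies to $D$ only if $w(D)\ge w$. Without it, $\area(D)\ge\area(T_{w,r})$ fails, and the splitting of $\area(K)-\area(T_{w,r})$ into two nonnegative pieces collapses.

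Since the minimal width is monotone under inclusion, you actually need $w(D)=w$ exactly. Such a $D$ does not exist in general, even for nearly extremal $K$. Already in the Euclidean limit case, take a regular triangle of height $w+\tau$ and cut off its corners by lines parallel to the opposite sides at distance $w$ from them. The resulting hexagon has minimal width $w$ and area excess $O(\tau)$. A triangle inside it has width $w$ in the three side-normal directions only if it has a vertex on each of the six sides. That forces its vertices to be three alternate corners. This triangle is regular but rotated by a small nonzero angle relative to those directions, so its smallest height is strictly less than $w$.

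Allowing $w(D)<w$ does not obviously repair this. You would have to bound $w-w(D)$ by $\area(K)-\area(D)$ with a constant that beats the growth rate of $\area(T_{w,r})$ in $w$.

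\textbf{The cited ingredients are not in \cite{FRS26}.} That paper contains neither an inscribed-triangle reduction nor the first-variation computations you invoke in Step~2. As recalled before Lemma~\ref{lem:areaofhexagonsincreases}, Theorem~\ref{thm:isominwidth:spindleconvex} is proved via the incircle, cap-domains and the hexagon $Q_{w,r,\varrho}$. So the choice of $x_1,x_2,x_3$ is unspecified, and Step~2, a nonsmooth constrained problem, remains unproved.

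\textbf{What is missing.} You need a comparison body that is guaranteed to lie in $K$ and whose area exceeds $\area(T_{w,r})$ by an amount linear in its deviation. The paper uses the inradius as the parameter:
\begin{itemize}
\item $\eta=\varrho(K)-\varrho_0\ge 0$ by Theorem~\ref{thm:Blaschke}.
\item Lemma~\ref{lem:nonoverlapping}, which encodes the width hypothesis, gives a cap-domain $C=B\cup C_1\cup C_2\cup C_3\subseteq K$ with apices at distance $w-\varrho$ from the incenter.
\item Rearranging the caps symmetrically and comparing with $Q_{w,r,\varrho}$ yields $\eta\le c_1\varepsilon$ (Lemma~\ref{lem:boundingeta}).
\item The non-overlapping caps leave only $O(\eta)$ angular slack (Lemma~\ref{lem:angles-are-close}). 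Hence $\delta(C,\widetilde C)=O(\eta)$ and $\delta(\widetilde C,\phi T_{w,r})=\eta$.
\end{itemize}
Your Step~1 idea, that a far point forces an area gain linear in its distance, does appear in Lemma~\ref{lem:distance-of-K-and-C}. There it is applied to $C$ rather than to a triangle. For $C$, the needed bound $\area(K)-\area(C)\le\varepsilon$ holds because $\area(C)\ge\area(Q_{w,r,\varrho})\ge\area(T_{w,r})$.
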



Based on Gromer's ideas \cite{G00}, we prove (in Section~\ref{sec:symmetric-difference}) the following estimates that connect the Hausdorff distance and the symmetric difference on $\sph^2$ and $\hyp^2$.

\begin{lemma}\label{lem:groemer-noneuclidean-taylor}
Let $K$ and $L$ be convex bodies in $\M^2$. Let $D=\max\{\diam K,\diam L\}$. If $\M^2=\sph^2$, then assume also that $K\cup L$ is contained in an open hemisphere. Then there exists an $\tilde{\varepsilon}>0$ such that if $\delta(K,L)<\tilde{\varepsilon}$, then
$\Delta(K,L)\leq c_G\cdot\delta (K,L)$ for some constant $c_G$ that depends only on $D$.
\end{lemma}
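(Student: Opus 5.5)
Set $\eta=\delta(K,L)$. By the definition of the Hausdorff distance, $L\subset K_\eta$ and $K\subset L_\eta$, where $X_\eta$ denotes the closed $\eta$-neighbourhood of $X$ in $\M^2$. Hence
$$
K\,\triangle\, L\subset (K_\eta\setminus K)\cup(L_\eta\setminus L),
\qquad\text{so}\qquad
\Delta(K,L)\le \bigl(\area(K_\eta)-\area(K)\bigr)+\bigl(\area(L_\eta)-\area(L)\bigr).
$$
It therefore suffices to bound the area of the outer parallel shell $K_\eta\setminus K$ of a convex body $K$ of diameter at most $D$ by $c\,\eta$, with $c$ depending only on $D$, for $\eta$ small. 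This is Groemer's Euclidean argument transported to $\sph^2$ and $\hyp^2$.

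For the shell estimate I will use the Steiner-type formula in constant curvature $\kappa\in\{1,-1\}$. For a convex body $K$ in $\M^2$ with perimeter $\per(K)$ and area $A$,
$$
\area(K_\eta)=\per(K)\,s_\kappa(\eta)+\bigl(2\pi-\kappa A\bigr)\bigl(1-c_\kappa(\eta)\bigr)/\kappa + A,
$$
where $s_\kappa=\sin,\sinh$ and $c_\kappa=\cos,\cosh$, respectively. This formula can be checked first for geodesic polygons, where the shell splits into rectangles-like strips over the edges and circular sectors at the vertices whose angles sum to $2\pi-\kappa A$ by Gauss--Bonnet. It then passes to general $K$ by approximation, or follows directly from the Gauss--Bonnet theorem applied to $\partial K_\eta$. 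On $\sph^2$ the hemisphere assumption keeps $K$ proper convex, and for $\eta$ small $K_\eta$ stays in an open hemisphere, so the formula is valid there.

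Consequently
$$
\area(K_\eta)-\area(K)\le \per(K)\,s_\kappa(\eta)+\bigl|2\pi-\kappa A\bigr|\,\bigl|1-c_\kappa(\eta)\bigr|.
$$
By monotonicity of perimeter and area under inclusion of convex sets, $K$ lies in a ball of radius $D$. Hence $\per(K)\le 2\pi s_\kappa(D)$ and $A\le\area(B_D)$, both depending only on $D$. Since $s_\kappa(\eta)\le 2\eta$ and $|1-c_\kappa(\eta)|\le \eta^2\le\eta$ for $\eta<\tilde\varepsilon\le 1$, we obtain $\area(K_\eta\setminus K)\le c(D)\,\eta$. The same bound holds for $L$, which yields $c_G=2c(D)$.

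The main obstacle is justifying the perimeter monotonicity on $\sph^2$ and $\hyp^2$ and the parallel-body formula for non-smooth $K$. For the monotonicity I would use the Cauchy--Crofton formula, since a geodesic meeting the inner body meets the outer one at least as often on the boundary. For the formula I would rely on polygonal approximation together with continuity of area and perimeter in the Hausdorff metric.
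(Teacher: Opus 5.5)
Your skeleton is the paper's: the inclusions $K\setminus L\subset L_\eta\setminus L$ and $L\setminus K\subset K_\eta\setminus K$ (the paper writes this as $\area(K\cup L)-\area(K)\le\area(K^{(\delta(K,L))})-\area(K)$), then the non-Euclidean Steiner formula, then bounds on $\per(K)$ and $\area(K)$ in terms of $D$. You differ only in that last step.

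The paper uses sharp extremal results: Schmidt's hyperbolic isodiametric inequality, and Lemma~\ref{lem:maximal-perimeter}, which rests on completion and Barbier's theorem in $\hyp^2$ and on polarity plus the spherical P\'al inequality in $\sph^2$. You instead put $K$ inside a disk $B(x,D)$ with $x\in K$ and use monotonicity of area and, via Cauchy--Crofton, of perimeter. That is more elementary and adequate, since the lemma only asks for some constant depending on $D$. The paper's route just gives better explicit constants.

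\textbf{The step that fails.} On $\sph^2$ the disk $B(x,D)$ is convex only when $D<\pi/2$. The lemma, and its later use with $D\le 2r<\pi$, needs $D$ up to $\pi$. For $D>\pi/2$, $B(x,D)$ is the complement of an open cap, so a great circle can cross $\partial K$ twice while missing $\partial B(x,D)$. The claimed bound $\per(K)\le 2\pi\sin D$ is then false: a thin spherical triangle of diameter $D$ has perimeter close to $2D$, while $2\pi\sin D\to 0$ as $D\to\pi$.

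\textbf{The repair.} $K$ lies in a closed hemisphere, which is convex, so your Crofton argument gives $\per(K)\le 2\pi$ (equivalently, use $2\pi\sin\min\{D,\pi/2\}$). Also $0\le 2\pi-\area(K)\le 2\pi$ holds trivially. On the sphere this even gives a constant independent of $D$.

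\textbf{A minor point.} You do not need $K_\eta$ to stay in an open hemisphere. The spherical Steiner formula holds for every $0\le\eta<\pi/2$, because points at distance less than $\pi/2$ from a spherically convex body have a unique nearest point. Imposing that extra requirement would make $\tilde\varepsilon$ depend on $K$. Without it you can take $\tilde\varepsilon=1$ uniformly.
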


Lemma~\ref{lem:groemer-noneuclidean-taylor} yields the following corollary.

\begin{theorem}
\label{thm:stab:symmetric-difference}
    Under the same hypotheses as in Theorem~\ref{thm:stab:isominwidth:spindleconvex}, it holds that
 there are a positive constant $c_\Delta$ and an isometry $\phi\colon\M^2\to\M^2$ such that
$$
\Delta(K,\phi T_{w,r})\leq c_\Delta\varepsilon.
$$ 
\end{theorem}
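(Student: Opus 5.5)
The plan is to derive Theorem~\ref{thm:stab:symmetric-difference} by combining Theorem~\ref{thm:stab:isominwidth:spindleconvex} with Lemma~\ref{lem:groemer-noneuclidean-taylor}, applied to $L=\phi T_{w,r}$. Theorem~\ref{thm:stab:isominwidth:spindleconvex} gives an isometry $\phi$ with $\delta(K,\phi T_{w,r})\leq c_\delta\varepsilon$. Once the hypotheses of the lemma are verified, we get
$$
\Delta(K,\phi T_{w,r})\leq c_G\,\delta(K,\phi T_{w,r})\leq c_G c_\delta\,\varepsilon ,
$$
so we may take $c_\Delta=c_Gc_\delta$. The remaining work is to check, uniformly in $K$, that the lemma applies with a constant $c_G$ depending only on $w$ and $r$.

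\emph{Step 1: smallness.} The lemma needs $\delta(K,L)<\tilde\varepsilon$. Since $\delta(K,\phi T_{w,r})\leq c_\delta\varepsilon$, it suffices to shrink $\overline{\varepsilon}$ so that $c_\delta\overline{\varepsilon}<\tilde\varepsilon$. The threshold $\tilde\varepsilon$ may depend on $D$, so Step~2 must come first: we need a bound on $D$ that does not depend on $K$.

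\emph{Step 2: a uniform bound on $D$.} The diameter of $T_{w,r}$ is a fixed quantity depending only on $w,r$ and the curvature. For $K$, the Hausdorff estimate gives $\diam K\leq \diam T_{w,r}+2c_\delta\overline{\varepsilon}$. Alternatively, since $K$ is $r$-ball convex, it lies in some ball of radius $r$, so $\diam K\leq 2r$. Either way $D\leq D_0(w,r)$. Since $c_G$ depends only on $D$, and can be taken monotone in $D$ (or evaluated at $D_0$, as Groemer-type constants grow with the diameter), $c_G$ is uniform.

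\emph{Step 3: the spherical hemisphere condition.} This is the step needing the most care. On $\sph^2$ we assume $r<\pi/2$, so $K$, being an intersection of balls of radius $r$, lies in a closed ball of radius $r<\pi/2$. The same holds for $T_{w,r}$, hence for $\phi T_{w,r}$, in a ball $B(x,r)$. For small $\overline{\varepsilon}$, $K$ lies in the $c_\delta\overline\varepsilon$-neighbourhood of $\phi T_{w,r}$, and therefore in $B(x,r+c_\delta\overline\varepsilon)$. Choosing $\overline{\varepsilon}$ with $r+c_\delta\overline\varepsilon<\pi/2$ places $K\cup\phi T_{w,r}$ in an open hemisphere centred at $x$. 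On $\R^2$ and $\hyp^2$ no such condition is needed; in the Euclidean case the Groemer estimate holds directly.

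With these checks, the chain of inequalities above completes the proof, with $c_\Delta$ depending only on $w$, $r$ and $\M^2$.
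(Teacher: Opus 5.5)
Your proposal is correct and follows the paper's proof: Theorem~\ref{thm:stab:isominwidth:spindleconvex} gives $\phi$ with $\delta(K,\phi T_{w,r})\leq c_\delta\varepsilon$, the bound $\diam K\leq 2r$ from $r$-ball convexity makes the constant uniform, and \eqref{eq:groemer} (in $\R^2$) or Lemma~\ref{lem:groemer-noneuclidean-taylor} (in $\hyp^2$, $\sph^2$) finishes. Your explicit checks of the smallness threshold and of the open-hemisphere condition on $\sph^2$ are details the paper leaves implicit, and the one hypothesis you leave unchecked, $K\cap\phi T_{w,r}\neq\emptyset$ in \eqref{eq:groemer}, is immediate because $\phi$ makes the incenters coincide.
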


We note that the arguments leading to Theorems~\ref{thm:stab:isominwidth:spindleconvex} and \ref{thm:stab:symmetric-difference} also work in the limit cases, that is, when $r\to \frac{\pi}{2}$ on $\sph^2$ and when $r\to\infty$ in $\hyp^2$. Thus, Theorem~1.2 of Freyer and Sagmeister \cite{FS} and Lemma~\ref{lem:groemer-noneuclidean-taylor} yield the following stability statement for the symmetric difference on $\sph^2$, which is the spherical analog of the corresponding statement of Lucardesi and Zucco \cite{LZ24}.
\begin{theorem}\label{thm:spherical-symmetric-differene}
Let $T_w\subset\sph^2$ denote a regular triangle of width $0<w\leq\frac{\pi}{2}$. It holds that there are a positive constant $c_\Delta$ and an isometry $\phi\colon\sph^2\to\sph^2$ such that
$$
\Delta(K,\phi T_w)\leq c_\Delta\varepsilon
$$
for any $\varepsilon\in(0,\overline{\varepsilon})$ and all convex bodies $K\subset\sph^2$ with $\area(K)\leq\area(T_w) + \varepsilon$.
\end{theorem}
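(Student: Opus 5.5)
We deduce Theorem~\ref{thm:spherical-symmetric-differene} from the Hausdorff stability result for the spherical P\'al inequality together with Lemma~\ref{lem:groemer-noneuclidean-taylor}. The first step is to invoke Theorem~1.2 of Freyer and Sagmeister \cite{FS}, which for $0<w\leq\frac{\pi}{2}$ gives an optimal-order Hausdorff estimate. Let $K\subset\sph^2$ be a convex body of minimal width $w$ with $\area(K)\leq\area(T_w)+\varepsilon$ and $\varepsilon<\overline{\varepsilon}$. Then that theorem provides an isometry $\phi$ of $\sph^2$ and a constant $c_\delta$ depending only on $w$ such that
$$
\delta(K,\phi T_w)\leq c_\delta\varepsilon .
$$
Alternatively, one could rerun the argument of Theorem~\ref{thm:stab:isominwidth:spindleconvex} in the limit $r\to\frac{\pi}{2}$, where $T_{w,r}\to T_w$. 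The constants in that argument depend continuously on $r$ up to the endpoint, so the same linear bound should survive.

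The second step is to check the hypotheses of Lemma~\ref{lem:groemer-noneuclidean-taylor} for the pair $K$ and $L=\phi T_w$.
\begin{itemize}
\item[(a)] \emph{Smallness.} Shrinking $\overline{\varepsilon}$ if necessary so that $c_\delta\overline{\varepsilon}<\tilde{\varepsilon}$, we get $\delta(K,L)<\tilde{\varepsilon}$.
\item[(b)] \emph{Hemisphere containment.} For $w\leq\frac{\pi}{2}$ the regular triangle $T_w$ lies in an open hemisphere, strictly inside it, with a positive margin $\eta(w)$ from the boundary great circle. Since $K$ is contained in the $c_\delta\varepsilon$-neighbourhood of $\phi T_w$, choosing $\overline{\varepsilon}$ so small that $c_\delta\overline{\varepsilon}<\eta(w)$ places $K\cup\phi T_w$ in the same open hemisphere.
\item[(c)] \emph{Diameter.} $D=\max\{\diam K,\diam T_w\}\leq\diam T_w+2c_\delta\overline{\varepsilon}$, so $D$ is bounded by a quantity depending only on $w$, and $c_G$ is uniform.
\end{itemize}

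The lemma then yields
$$
\Delta(K,\phi T_w)\leq c_G\,\delta(K,\phi T_w)\leq c_Gc_\delta\,\varepsilon ,
$$
so we may take $c_\Delta=c_Gc_\delta$.

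The only delicate point is (b) in the borderline case $w=\frac{\pi}{2}$. There $T_w$ is the spherical triangle with right angles, an octant, which still sits strictly inside an open hemisphere, for instance the one centred at its centroid. So $\eta(w)>0$ for all $w\leq\frac{\pi}{2}$ and the argument goes through. I would also check that the constant $c_G$ in the lemma depends monotonically, or at least boundedly, on $D$, so that the bound on $D$ in (c) gives a uniform $c_G$.
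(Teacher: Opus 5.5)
Your proposal is correct and follows the paper's route: the paper obtains this theorem by combining the linear-order Hausdorff stability of Theorem~1.2 in \cite{FS} (valid for $w\le\frac{\pi}{2}$) with Lemma~\ref{lem:groemer-noneuclidean-taylor}, and it also notes that the $r$-ball argument carries over to the limit $r\to\frac{\pi}{2}$. Your explicit checks of the lemma's hypotheses fill in details the paper leaves implicit: shrinking $\overline{\varepsilon}$ so that $\delta<\tilde{\varepsilon}$, containment in an open hemisphere, and a uniform bound on $D$. So does your added assumption that $K$ has minimal width $w$, without which the statement as written would fail.
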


We also note that in the limit case $r\to\infty$ in $\hyp^2$, one obtains the horocyclically convex P\'al inequality, thus, Theorem~\ref{thm:isominwidth:spindleconvex}
improves on the orders of magnitude of the previously non-optimal stability result of B\"or\"oczky, Freyer, Sagmeister \cite{BoFS}, and we also obtain a stability statement for the symmetric difference.

\begin{theorem}\label{thm:hyperbolic-limit-cases}
For $w>0$, let $T_w$ denote the regular horocyclic triangle of minimal width $w$. If $K\subset\hyp^2$ is a horocyclically convex body of minimal width $w$ and $\area(K)\leq \area(T_w)+\varepsilon$ for $\varepsilon\in(0,\overline{\varepsilon})$, then there exists an isometry $\phi\colon\hyp^2\to\hyp^2$ such that
$$
\delta(K,\phi T_w)\leq c_\delta\varepsilon
$$
and
$$
\Delta(K,\phi T_w)\leq c_\Delta\varepsilon
$$
for some positive constants $c_\delta$ and $c\Delta$.
\end{theorem}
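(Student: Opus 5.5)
The plan is to obtain Theorem~\ref{thm:hyperbolic-limit-cases} as the limit case $r\to\infty$ of Theorems~\ref{thm:stab:isominwidth:spindleconvex} and \ref{thm:stab:symmetric-difference} in $\hyp^2$. This works only if the constants $c_\delta$, $c_\Delta$ and the threshold $\overline{\varepsilon}$ in those theorems can be chosen independently of $r$ for all large $r$. Alternatively, one can rerun the argument of Theorem~\ref{thm:stab:isominwidth:spindleconvex} verbatim with circles of radius $r$ replaced by horocycles.

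\textbf{Reduction.} A horocyclically convex body $K$ is an intersection of horoballs. Any bounded piece of a horoball is approximated, uniformly on compact sets, by balls of radius $r$ tangent to the horoball at a point on the relevant side. Hence $K$ is the decreasing limit, in the Hausdorff metric, of its $r$-ball convex hulls $K_r$ as $r\to\infty$. We then check three facts.
\begin{enumerate}[(i)]
\item $\area(K_r)\to\area(K)$.
\item The minimal widths $w_r$ of $K_r$ converge to $w$, with $w_r\ge w$ since $K\subseteq K_r$ (the width is monotone under inclusion).
\item $T_{w_r,r}\to T_w$, so $\area(T_{w_r,r})\to\area(T_w)$.
\end{enumerate}
Fix $\varepsilon$ and suppose $\area(K)\le\area(T_w)+\varepsilon$. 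For large $r$ this gives $\area(K_r)\le\area(T_{w_r,r})+\varepsilon+o(1)$. Applying Theorem~\ref{thm:stab:isominwidth:spindleconvex} to $K_r$ yields isometries $\phi_r$ with $\delta(K_r,\phi_rT_{w_r,r})\le c_\delta(\varepsilon+o(1))$. The bodies lie in a bounded region, with diameter controlled by $w$ and $\varepsilon$, so a compactness argument gives a convergent subsequence $\phi_r\to\phi$. Passing to the limit gives $\delta(K,\phi T_w)\le c_\delta\varepsilon$.

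\textbf{Symmetric difference.} The second estimate follows from the first by Lemma~\ref{lem:groemer-noneuclidean-taylor}. The diameters of $K$ and $T_w$ are bounded in terms of $w$ once $\varepsilon<\overline{\varepsilon}$. Shrinking $\overline{\varepsilon}$ so that $c_\delta\overline{\varepsilon}<\tilde\varepsilon$, we get $\Delta(K,\phi T_w)\le c_Gc_\delta\varepsilon$.

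\textbf{Main obstacle: uniformity in $r$.} The crux is showing that $c_\delta$ and $\overline{\varepsilon}$ from Theorem~\ref{thm:stab:isominwidth:spindleconvex} stay bounded as $r\to\infty$. I would go through that proof and check that every constant depends continuously on the curvature $1/r$ of the boundary arcs, including at $1/r=0$. Such constants typically come from derivatives of area with respect to perturbations of the vertices or supporting arcs of near-extremal bodies, and from positivity of second-order terms at $T_{w,r}$. Each of these has a well-defined horocyclic limit, because the relevant hyperbolic trigonometric formulas extend continuously to $r=\infty$ with circles becoming horocycles.

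The key point is that the nondegeneracy at $r=\infty$ is exactly the statement that $T_w$ is the unique minimizer, proved in \cite{BoFS}. So the limiting constants are finite and positive, and continuity gives uniform bounds for all $r\ge r_0$. If this continuity check becomes messy, the fallback is to repeat the proof of Theorem~\ref{thm:stab:isominwidth:spindleconvex} directly for horocyclic arcs. In that version I expect the only step needing genuinely new input to be the local linear area-deficit estimate near $T_w$.
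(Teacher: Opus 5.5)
Your reduction is sound as far as it goes: $K_r\downarrow K$ in the Hausdorff metric, facts (i)--(iii), and the compactness argument for $\phi_r$ all work. For the $\Delta$ part, bounding $\diam K$ through the Hausdorff estimate is also right; the paper's proof of Theorem~\ref{thm:stab:symmetric-difference} uses $\diam K\le 2r$, which is useless as $r\to\infty$. The paper gives no separate proof of this theorem. It only remarks that the arguments of Sections~\ref{sec:stability} and~\ref{sec:symmetric-difference} go through at $r=\infty$ with horocycles replacing $r$-circles, which is your fallback.

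The gap is in the step that carries all the weight, uniformity in $r$. The constants $c_2,c_3,c_4$ and $\overline{\varepsilon}$ have explicit expressions with finite limits as $r\to\infty$. For example, $\varrho_0(r)\to\varrho_0(\infty)>0$, and $\area(T_w)<\area(B(p,\frac{w}{2}))$ keeps $\overline{\varepsilon}$ bounded below. The real issue is $c_1$ from Lemma~\ref{lem:boundingeta}, which comes from the hexagon-area function $f_r(\eta)=\area(Q_{w,r,\varrho_0+\eta})$. A bound $\eta\le c_1(f_r(\eta)-f_r(0))$ holding uniformly for $r\ge r_0$ forces the horocyclic hexagon function to satisfy $f_\infty(\eta)-f_\infty(0)\ge c\,\eta$, in particular $f_\infty'(0)>0$. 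Your claim that this nondegeneracy ``is exactly'' the uniqueness of the minimizer from \cite{BoFS} fails on three counts.
\begin{enumerate}
\item $Q_{w,\infty,\varrho}$ is not a competitor in the P\'al problem, since it need not have minimal width $w$. The hexagon inequality of Lemma~\ref{lem:areaofhexagonsincreases} is an ingredient of the isominwidth inequality, not a consequence of it.
\item A strict inequality carries no rate. A minimum with $f'(0)=0$ would only give $\eta=O(\sqrt{\varepsilon})$.
\item The strict inequalities $f_r(\eta)>f_r(0)$ at finite $r$ survive only as non-strict ones in the limit. So positivity at $1/r=0$ cannot be extracted by continuity.
\end{enumerate}
Relatedly, ``positivity of second-order terms'' is the wrong mechanism. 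A linear rate is possible only because the minimum sits at the endpoint $\eta=0$ of the admissible interval, forced by Theorem~\ref{thm:Blaschke}, where the first derivative can be positive.

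What you must supply is an explicit computation. Using the horocyclic limits of Lemmas~\ref{lemma:triangle-area} and~\ref{lem:area-of-circular-segment}, show that $f_\infty'(0)>0$ and that $f_\infty(\eta)>f_\infty(0)$ for $0<\eta\le\frac{w}{2}-\varrho_0(\infty)$. The paper's remark tacitly relies on this same fact. Once it is in hand, both routes close, and they differ only in bookkeeping.
\begin{itemize}
\item The direct rerun needs horocyclic versions of Lemma~\ref{lem:nonoverlapping} and Theorem~\ref{thm:Blaschke}, but then all constants are explicit.
\item Your approximation route uses those structural inputs only at finite $r$. In exchange, you must verify joint continuity of every constant in $(w,1/r)$ near $(w,0)$, including uniformity of the $O(\eta^2)$ remainder in Lemma~\ref{lem:angles-are-close}.
\end{itemize}
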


Finally, we note that optimality holds in the limit cases as well.

The remainder of the paper is organized as follows. We gather some necessary tools and information in Section~\ref{sec:preliminaries}. Theorem~\ref{thm:stab:isominwidth:spindleconvex} is proved in Section~\ref{sec:stability}.  We prove Lemma~\ref{lem:groemer-noneuclidean-taylor} in Section~\ref{sec:symmetric-difference} that directly yields Theorem~\ref{thm:stab:symmetric-difference}. Finally, the optimality of Theorems~\ref{thm:stab:isominwidth:spindleconvex} and \ref{thm:stab:symmetric-difference} is verified in Section~\ref{sec:optimality}.

\section{Preliminaries}\label{sec:preliminaries}

We use the notation $\M^n$ for an $n$-dimensional space of constant curvature with the geodesic metric $d$. Although many of the concepts used in the paper depend on which constant curvature space is considered, we simplify the notation by suppressing this dependence if it is clear from the context. We only indicate the underlying space if the clarity of exposition requires it. This rule is applied to all the quantities that appear throughout the paper.

For a pair of points $x,y\in\M^n$, the unique geodesic segment with endpoints $x$ and $y$ is denoted as $[x,y]$ (where we also assume that $x$ and $y$ are not antipodal if $\M^n=\sph^n$). A set $K\subset\M^n$ is \emph{convex} if (in the case $\M^n=\sph^n$, we assume that $K$ is contained in an open hemisphere and) $[x,y]\subseteq K$ for all $x,y\in K$. We say that $K\subset\M^n$ is a \emph{convex body} if it is convex, compact, and has a non-empty interior.

The distance of a point $x$ and a compact set $A$ in $\M^n$ is 
\[
d(x, A)=\inf_{a\in A}d(x,a).
\]
The Hausdorff distance of non-empty compact sets $A, B\subset \M^n$ is defined as
\begin{equation}\label{def:Hausdorff}
    \delta(A,B)=\max \{\sup_{a\in A}d(a,B), \sup_{b\in B}d(b,A)\}.
\end{equation}
For properties of Hausdorff-distance, we refer to \cite{Sch14}. For $\lambda>0$ (we assume that $\lambda<\pi/2$ if $\M^n=\sph^n$), the radius $\lambda$ parallel domain of a set $A$ is
\[
A^{(\lambda)}=\{x\in \M^n\colon d(x,A)\leq \lambda\}.
\]
Then for compact sets $A$ and $B$, 
\[
\delta (A,B)=\min\{\lambda\geq 0\colon A\subset B^{(\lambda)}\text{ and } B\subset A^{(\lambda)}\}.
\]

We use $\vol_n(\cdot)$ to denote the volume (Lebesgue measure) of (measurable) sets in $\M^n$. If $n=2$, then $\area(\cdot)$ is often used for the area instead of $\vol_2(\cdot)$. For $A, B\subset \M^n$, the symmetric difference distance is defined as 
\begin{equation}\label{def:symmetric-difference}
    \Delta (A, B)=\vol_n (A\cup B)-\vol_n (A\cap B).
\end{equation}

Groemer proved bounds between the Hausdorff metric and the symmetric difference metric in $\R^n$; see \cite{G00}*{(1)--(4) on p. 108}. For our purposes, the following inequality is important (this is (1) in \cite{G00}): Let $K$ and $L$ be compact convex sets in $\R^n$ with $K\cap L\neq\emptyset$, and let $D=\max\{\diam K, \diam L\}$, where $\diam$ denotes the diameter of a set. Then
\begin{equation}\label{eq:groemer}
   \Delta(K, L)\leq c_G \delta (K, L),  
\end{equation}
with $c_G=(2\kappa_n/(2^{1/n}-1))(D/2)^{n-1}$, where $\kappa_n$ is the volume of a unit ball of $\R^n$.

We are unaware of $n$-dimensional spherical and hyperbolic analogs of this inequality (and, in general, those in \cite{G00}). However, we prove and use the analogs of \eqref{eq:groemer} stated in Lemma~\ref{lem:noneuclidean-groemer} for $\sph^2$ and $\hyp^2$.

The notion of width depends on the space. 
The \emph{width} of a convex body $K$ with respect to a supporting hyperplane $H$, denoted by $w(K,H)$, is the geodesic distance of $H$ and a farthest parallel or ultraparallel supporting hyperplane to $K$ if $\M^n=\R^n$ or $\M^n=\hyp^n$. In $\sph^n$, $w(K,H)$ is defined with lunes. A \emph{lune} is the intersection of two different hemispheres with non-antipodal centers. The \emph{corner} of a lune is the intersection of its two bounding hyperplanes. A lune $L$ \emph{supports} a convex body $K$ if $K\subset L$ and $K$ intersects both hyperplanes bounding $L$ outside its corner. The \emph{breadth} of the lune $L$ is the angle of the two hyperplanes bounding $L$, and $w(K,H)$ is the minimum breadth of all supporting lunes to $K$ such that one of their bounding hyperplanes is $H$. In all cases, the width function is continuous in $K$ with respect to the Hausdorff metric and in $H$ with respect to the topology of the sphere; for a detailed discussion of the hyperbolic case, see B\"or\"oczky, Cs\'epai and Sagmeister \cite{BoCsS}*{Theorem~1.1} and Lassak \cite{La15} for the spherical case. We note that the hyperbolic width functions studied in \cite{BoCsS} are different from the one we are using in this paper, however, this notion of width coincides with the so-called ``extended Leichtweiss width'' introduced in \cite{BoCsS} restricted to supporting hyperplanes (see Lassak \cite{Las23}*{Proposition~2}). For a fixed convex body $K$, the maximum of the width function among all supporting hyperplanes is the diameter of $K$, and the minimum width of $K$, which is denoted by $w(K)$, is monotone with respect to containment (see \cite{La15} and \cite{Las23}).

In this paper, we consider $r$-ball convex bodies. We use the notation $B(x,r)$ for the closed ball of radius $r$ centered at $x$ with respect to the geodesic metric in $\M^n$. In $\sph^n$, $B(x,r)$ is convex if and only if $r<\tfrac{\pi}{2}$. We say that $K\subset\M^n$ is \emph{$r$-ball convex} if it is equal to the intersection of all closed balls of radius $r$ in which it is contained, and if $r<\tfrac{\pi}{2}$ if $\M^n=\sph^n$. Unless $K$ is the empty set or a singleton, the interior of an $r$-ball convex $K$ is non-empty, so $r$-ball convex sets with at least two points are convex bodies. The smallest $r$-ball convex set containing a subset $X$ of some $r$-ball is called the \emph{$r$-ball convex hull} of $X$. If $n=2$, we use the term \emph{$r$-disk} for $B(x,r)$.
The intersections of finitely many $r$-disks in $\M^2$ are called \emph{$r$-disk polygons}. It is not difficult to see that the boundary of an $r$-disk polygon $P$ is the union of a finite number of circular arcs of radius $r$ and, unless $P$ is a disk of radius $r$, it has an equal number of arcs, called \emph{sides}, and non-smooth points, called \emph{vertices}. We introduce the notation $T_{w,r}$ as the $r$-disk triangle with minimal width $w$ where the centers of the three defining $r$-disks are the vertices of a regular triangle in $\M^2$. With the assumption $0<w\leq r$, $T_{w,r}$ is unique up to isometry, and its minimal width is attained by the supporting line at the midpoints of the sides (see Fodor, Robock, Sagmeister \cite{FRS26}).

For a convex body $K\subset\M^n$, let
$$
\varrho(K)=\max\{R>0\colon B(x,R)\subseteq K\text{ for some }x\in K\}.
$$
We call $\varrho(K)$ the \emph{inradius} of $K$ and a ball $B(x,\varrho(K))\subseteq K$ an \emph{inscribed ball} (or an \emph{incircle} if $n=2$) of $K$. Inscribed balls of convex bodies are not unique in general, however, they are if $\M^n=\sph^n$, as well as for horocyclically convex bodies in $\hyp^n$ (see B\"or\"oczky, Freyer, Sagmeister \cite{BoFS}) and ball-convex bodies in any space of constant curvature \cite{FRS26}. 

We will call the domain bounded by a concave circular arc and two convex circular $r$-arcs touching the circle with the concave arc of the domain on its boundary a \emph{cap}. The \emph{apex} of a cap is the intersection point of the two convex $r$-arcs of the cap. A \emph{cap-domain} is the union of a circle and finitely many caps (at least one). We have the following lemma from Fodor, Robock, Sagmeister \cite{FRS26}.

\begin{lemma}\label{lem:nonoverlapping}
Let $K\subset\M^2$ be an $r$-ball convex body of minimal width $w$ where $0<w\leq r$ and if $\M^2=\sph^2$, then $r<\frac{\pi}{2}$. Let $B=B(p,\varrho)$ be the incircle of $K$ with $\varrho<\frac{w}{2}$. Then, there is an $r$-ball convex cap-domain $B\cup C_1\cup C_2\cup C_3$ contained in $K$ such that $C_1,C_2,C_3$ are non-overlapping with apexes $q_1,q_2,q_3$ with $d(p,q_i)=w-\varrho$.
\end{lemma}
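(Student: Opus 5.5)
The plan is to build the three caps from the points where the incircle touches $\partial K$, using the width condition to find a point of $K$ far from each corresponding tangent line, and then to use $r$-ball convexity to fill in each cap. Write $B=B(p,\varrho)$. Since $B$ is an incircle, the contact points $\partial B\cap\partial K$ are not all contained in an open half-circle of $\partial B$. Otherwise $B$ could be moved slightly and enlarged inside $K$; this step uses the uniqueness of the incircle for $r$-ball convex bodies, quoted from \cite{FRS26}.

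\textbf{Step 1: three contact points surrounding $p$.}
First I rule out the case of only two antipodal contact points $x_1,x_2$. The tangent lines $H_1,H_2$ of $B$ at $x_1,x_2$ would then be supporting lines of $K$, so $w(K,H_1)$ would be at most the distance between $H_1$ and $H_2$ (respectively the breadth of the lune they bound in $\sph^2$). This distance is $2\varrho<w$, a contradiction. So I pick three contact points $x_1,x_2,x_3$ such that $p$ lies in the interior of their convex hull. At each $x_i$ the tangent line $H_i$ of $B$ supports $K$. Moreover $K$ lies in an $r$-disk $D_i$ tangent to $B$ at $x_i$, since $K$ is the intersection of $r$-disks and $r\ge w>\varrho$.

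\textbf{Step 2: the apexes $q_i$.}
Since $w(K,H_i)\ge w$, there is a point $y_i\in K$ whose distance from $H_i$ is at least $w$, measured as the breadth of the supporting lune in the spherical case. Since $d(p,H_i)=\varrho$, this gives $d(p,y_i)\ge w-\varrho$. Using convexity of $K$ and continuity, I choose $q_i\in[p,y_i]$ with $d(p,q_i)=w-\varrho$. Let $C_i$ be the region bounded by the arc of $\partial B$ between the two tangency points and the two $r$-arcs through $q_i$ tangent to $B$. Then $B\cup C_i$ is the $r$-ball convex hull of $B\cup\{q_i\}$, and it lies in $K$ because $K$ is $r$-ball convex. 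Hence $B\cup C_1\cup C_2\cup C_3\subseteq K$.

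\textbf{Step 3: the caps do not overlap (main obstacle).}
The cap $C_i$ occupies an angular sector around the ray $pq_i$. Its half-angle $\alpha=\alpha(\varrho,w,r)$ is determined by the triangle formed by $p$, $q_i$ and the center of a tangent $r$-arc, computed by the law of cosines of $\M^2$. So $C_i$ and $C_j$ are non-overlapping exactly when the angle $\angle q_ipq_j$ is at least $2\alpha$.

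To get this angular separation, I use that $y_i$ is far from $H_i$ but lies in $D_j\cap D_k$. This forces $y_i$, and hence the direction of $q_i$, into the sector around the direction $-x_i$ bounded by the directions in which the tangent disks $D_j,D_k$ cut off points at distance $w-\varrho$ from $p$. The three sectors so obtained are separated by neighbourhoods of the contact directions $x_1,x_2,x_3$. The inequality I must check is that the angular half-width of each such neighbourhood exceeds $\alpha$. This amounts to showing that a point at distance $w-\varrho$ from $p$ within angle $\alpha$ of $x_j$ lies outside $D_j$, which I expect reduces to $w-\varrho>\varrho$, i.e.\ $\varrho<w/2$.

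This trigonometric comparison, done uniformly in the three geometries and valid for all $r\ge w$ (with $r<\pi/2$ on $\sph^2$), is where I expect the real work. I would first handle it in $\R^2$ by explicit computation and then transfer it to $\sph^2$ and $\hyp^2$ via the corresponding laws of cosines. If the direct angular bound fails, the fallback is to replace $q_i$ by the point of $K$ at distance $w-\varrho$ from $p$ in the direction $-x_i$ itself, and to justify that this point lies in $K$ by the lune/strip width argument.
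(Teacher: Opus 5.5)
The paper does not prove Lemma~\ref{lem:nonoverlapping}; it is quoted from \cite{FRS26}, so your argument has to stand on its own. Steps~1--2 follow the natural route. Step~3, however, carries the whole content of the lemma, and there the proposal has a genuine gap: the work is aimed at an inequality that holds automatically.

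The comparison you single out is automatic. Write $D_s$ for the $r$-disk containing $B$ and tangent to it at $s\in\partial B$. The law of cosines in the triangle formed by $p$, a point $q$ with $d(p,q)=w-\varrho$, and the centre of $D_s$ gives $q\in D_s$ if and only if $\angle(q,p,s)\ge\alpha$; this is exactly how $\alpha$ is defined. So the neighbourhood of $x_j$ cut out by $D_j$ at radius $w-\varrho$ has half-width exactly $\alpha$, and the hypothesis $\varrho<\frac w2$ plays no role there. Moreover, $q_i\in K\subseteq D_1\cap D_2\cap D_3$ already shows that $C_i$ is attached along an arc of $\partial B$ with no $x_l$ in its interior. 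Hence $C_i$ lies inside one of the three arcs into which $x_1,x_2,x_3$ divide $\partial B$.

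What you have not proved is the sentence ``this forces $y_i$ \dots\ into the sector around the direction $-x_i$''. That is, you need that this arc is the one \emph{opposite} to $x_i$, not one of the two arcs adjacent to $x_i$. Without it, nothing rules out two apexes, say $q_1$ and $q_2$, pointing in the same or nearby directions, in which case $C_1$ and $C_2$ overlap. With it, non-overlapping is immediate and no trigonometric comparison is needed.

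This is the step where $2\varrho<w$ is really used. You must show that every $y\in K$ whose direction from $p$ lies in the arc between $x_i$ and $x_j$ satisfies $d(y,H_i)\le 2\varrho<w$, so that $y_i$ cannot lie there.
\begin{itemize}
\item In $\R^2$, such $y$ lie in the quadrilateral with vertices $p,x_i,v,x_j$, where $v=H_i\cap H_j$. On it the affine function $d(\cdot,H_i)$ is at most $\max\{\varrho,d(x_j,H_i)\}\le 2\varrho$.
\item In $\hyp^2$ and $\sph^2$ the analogous estimate needs a separate argument. In $\hyp^2$ the lines $H_i,H_j$ need not even meet, so the region must be confined using $D_i$ and $D_j$.
\end{itemize}

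The same problem already affects Step~1 in $\hyp^2$. With the paper's definition, $w(K,H_1)$ is the distance to a \emph{farthest} ultraparallel supporting line, i.e.\ $\max_{y\in K}d(y,H_1)$. So the supporting line $H_2$ only gives $w(K,H_1)\ge 2\varrho$, not $\le$. A convex body in the strip between $H_1$ and $H_2$ can reach much farther than $2\varrho$ from $H_1$. The fix is to use $K\subseteq D_2$: the centre of $D_2$ lies on the common perpendicular at distance $r-2\varrho$ beyond $H_1$. Since signed distance to $H_1$ is $1$-Lipschitz, every point of $K$ is then within $2\varrho$ of $H_1$.

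Your fallback is not valid. The width condition yields some far point $y_i$, but the point at distance $w-\varrho$ from $p$ in direction $-x_i$ is in general not in $K$. Already in the Euclidean limit, take a scalene triangle whose shortest altitude falls on the side touching $B$ at $x_i$. The only point at distance $w$ from that side is the opposite vertex, which typically does not lie on the normal through the incentre.

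Finally, that the contact points do not lie in an open semicircle follows from the maximality of $B$ alone; uniqueness of the incircle is not needed.
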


We also have the following formula for the inradius of $r$-ball convex regular triangles of width $w$ \cite{FRS26}.

\begin{lemma}\label{lem:rho0}
Let $T_{w,r}\subset\M^2$ be an $r$-ball convex regular triangle of minimal width $w$ where $0<w\leq r$ and if $\M^2=\sph^2$, we also assume $r<\frac{\pi}{2}$, and let $\varrho_0$ be the inradius of $T_{w,r}$. Then
\begin{equation}
\varrho_0=
\begin{cases}
\frac{r+w-\sqrt{\tfrac{4r^2-(r-w)^2}{3}}}{2}, &\text{if }\M^2=\R^2,\\
\frac{r+w-\arcosh\tfrac{4\cosh r-\cosh(r-w)}{3}}{2}, &\text{if }\M^2=\hyp^2,\\
\frac{r+w-\arccos\tfrac{4\cos r-\cos(r-w)}{3}}{2}, &\text{if }\M^2=\sph^2.
\end{cases}
\end{equation}
\end{lemma}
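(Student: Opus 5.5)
By the symmetry of $T_{w,r}$ I would reduce the formula to a single triangle computation: one law of cosines in $\M^2$ (Euclidean, hyperbolic or spherical) in a triangle with a $2\pi/3$ angle at the common center, whose two sides adjacent to that angle sum to $r+w-2\varrho_0$.

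Let $a_1,a_2,a_3$ be the vertices of the regular triangle whose $r$-disks define $T_{w,r}=\bigcap_i B(a_i,r)$. Let $o$ be its center and $R=d(o,a_i)$.

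\emph{Inradius and side midpoints.} The rotation by $2\pi/3$ about $o$ preserves $T_{w,r}$. Since the incircle of an $r$-ball convex body is unique, the incircle is centered at $o$. The side of $T_{w,r}$ lying on $\partial B(a_i,r)$ is the arc on the far side of $o$ from $a_i$. The boundary point of $B(a_i,r)$ closest to $o$ is its midpoint $m_i$, which lies on the ray from $a_i$ through $o$ beyond $o$. Hence
\[
\varrho_0=d(o,m_i)=r-R .
\]
Here I would check that $o$ lies in the interior, so that $R<r$.

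\emph{Vertices and width.} The vertex $v_i$ of $T_{w,r}$ nearest to $a_i$ is the intersection of $\partial B(a_j,r)$ and $\partial B(a_k,r)$ on the $a_i$-side, where $\{i,j,k\}=\{1,2,3\}$. By the reflection symmetry in the line $oa_i$, the point $v_i$ lies on the ray from $o$ towards $a_i$; put $x=d(o,v_i)$. The minimal width of $T_{w,r}$ is attained by the supporting line at $m_i$, as recalled in the preliminaries from \cite{FRS26}. By the same reflection symmetry, the farthest point of $T_{w,r}$ from this supporting line is $v_i$, and it lies on the common perpendicular through $o$. Therefore
\[
w=d(m_i,v_i)=\varrho_0+x,\qquad\text{so}\qquad x=w-\varrho_0 .
\]
In $\hyp^2$ and $\sph^2$ one must make sure that the width notion (ultraparallel distance, respectively lune breadth) really reduces to this segment length. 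This holds because the line through $m_i$, $o$ and $v_i$ is orthogonal to the supporting line at $m_i$, and in the spherical case the lune with corner on the polar of this perpendicular has breadth $d(m_i,v_i)$.

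\emph{The key equation.} Consider the triangle $o,a_j,v_i$. We have $d(a_j,v_i)=r$, $d(o,a_j)=R=r-\varrho_0$ and $d(o,v_i)=x=w-\varrho_0$. The angle at $o$ between the rays towards $a_j$ and $a_i$ is $2\pi/3$. Set $a=r-\varrho_0$, $b=w-\varrho_0$ and $s=a+b=r+w-2\varrho_0$, and note $a-b=r-w$. The law of cosines, applied in each geometry, gives the following.
\begin{itemize}
\item In $\R^2$: $r^2=a^2+b^2+ab=\tfrac{3s^2+(r-w)^2}{4}$. Hence $s=\sqrt{\tfrac{4r^2-(r-w)^2}{3}}$.
\item In $\hyp^2$: $\cosh r=\cosh a\cosh b+\tfrac12\sinh a\sinh b$. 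Using $\cosh a\cosh b=\tfrac{\cosh(a+b)+\cosh(a-b)}2$ and $\sinh a\sinh b=\tfrac{\cosh(a+b)-\cosh(a-b)}2$, this becomes $\cosh r=\tfrac{3\cosh s+\cosh(r-w)}{4}$. Hence $s=\arcosh\tfrac{4\cosh r-\cosh(r-w)}{3}$.
\item In $\sph^2$: the same manipulation with $\cos$ gives $\cos r=\tfrac{3\cos s+\cos(r-w)}4$. Hence $s=\arccos\tfrac{4\cos r-\cos(r-w)}{3}$.
\end{itemize}
Solving $\varrho_0=(r+w-s)/2$ gives the three stated formulas.

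The step I expect to need the most care is justifying the geometric configuration rather than the algebra. The points to verify are:
\begin{itemize}
\item $o\in\operatorname{int}T_{w,r}$ and $R<r$;
\item the vertex $v_i$ lies on the ray $oa_i$ (and not on the opposite ray), which fixes the angle as $2\pi/3$ rather than $\pi/3$;
\item $a,b\ge0$ and $s\ge0$, so that the nonnegative branch of the square root, $\arcosh$ or $\arccos$ is the right one;
\item in $\sph^2$, all distances stay below $\pi/2$ thanks to $r<\pi/2$, so the spherical law of cosines applies without ambiguity.
\end{itemize}
I would settle these by the continuity and monotonicity of the configuration in $w\in(0,r]$. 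At $w=r$ we have $v_i=a_i$ (the Reuleaux case), and the ray condition persists as $w$ decreases, since the centers $a_i$ move outward from $o$ while $v_i$ stays on the same symmetry axis.
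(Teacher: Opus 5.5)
Your proposal is correct and is essentially the computation behind this lemma. The paper does not reprove it (it quotes it from \cite{FRS26}), but your triangle $o,a_j,v_i$, with sides $r-\varrho_0$, $w-\varrho_0$, $r$ and angle $2\pi/3$ at $o$, is exactly the triangle $[q_1,p,c]$ in the proof of Lemma~\ref{lem:angles-are-close} at $\eta=0$, and your product-to-sum algebra correctly yields the three stated formulas. The one soft spot is the claim that $v_i$ is the point of $T_{w,r}$ farthest from the supporting line $H$ at $m_i$: in $\hyp^2$, where the width is the distance to the \emph{farthest} ultraparallel supporting line, reflection symmetry only shows that the perpendicular at $v_i$ supports $T_{w,r}$, which gives one inequality. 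The reverse inequality follows from $T_{w,r}\subseteq B(o,x)$ with $x=d(o,v_i)$ (on each side arc the distance to $o$ increases from the midpoint to the endpoints), together with $d(z,H)\le d(z,o)+d(o,m_i)$.
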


For $r$-ball convex bodies of minimal width $w$ in $\M^2$, we have the following bound on the inradius (see Fodor, Robock, Sagmeister \cite{FRS26}).

\begin{theorem}\label{thm:Blaschke}
Let $K\subset\M^2$ be an $r$-ball convex body of minimal width $w$ where $0<w\leq r$ and if $\M^2=\sph^2$, we also assume $r<\frac{\pi}{2}$. Let $T_{w,r}\subset\M^2$ denote an $r$-ball convex regular triangle of minimal width $w$. Then
$$
\varrho\left(K\right)\geq\varrho\left(T_{w,r}\right)
$$
with equality if and only if $K$ and $T_{w,r}$ are congruent.
\end{theorem}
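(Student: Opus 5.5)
The plan is to reduce the inequality to an angle-counting argument around the incircle, using the cap-domain of Lemma~\ref{lem:nonoverlapping}.

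Let $B=B(p,\varrho)$ be the incircle of $K$, with $\varrho=\varrho(K)$, and write $\varrho_0=\varrho(T_{w,r})$. First I would dispose of the case $\varrho\geq \frac{w}{2}$ by showing that $\varrho_0<\frac{w}{2}$ always holds. By Lemma~\ref{lem:rho0} this is a direct check. For example, in $\R^2$ it is equivalent to $r<\sqrt{(4r^2-(r-w)^2)/3}$, that is, to $(r-w)^2<r^2$, which is true for $0<w\leq r$. In $\hyp^2$ and $\sph^2$ the same comparison is done with $\cosh$ and $\cos$, using $\cosh(r-w)<\cosh r$, respectively $\cos(r-w)>\cos r$. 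So we may assume $\varrho<\frac{w}{2}$. Then Lemma~\ref{lem:nonoverlapping} gives three non-overlapping caps $C_1,C_2,C_3\subset K$ attached to $B$, with apexes $q_i$ satisfying $d(p,q_i)=w-\varrho$.

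Next I would quantify the angular size of a cap. For a cap of the lemma, the two convex $r$-arcs through the apex $q$ are tangent to $B$. Let $2\alpha$ be the angle at $p$ subtended by the two tangency points. By symmetry, $\alpha$ depends only on $\varrho$, $w$, $r$ and the curvature, so write $\alpha=\alpha(\varrho)$. It is computed in the triangle formed by $p$, $q$ and the centre $z$ of one bounding $r$-disk. This disk contains $B$ and is internally tangent to it, so $d(p,z)=r-\varrho$, $d(z,q)=r$ and $d(p,q)=w-\varrho$. Hence $\alpha$ is the angle at $p$ of a triangle with these three side lengths, given by the Euclidean, hyperbolic or spherical law of cosines.

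Since the caps are non-overlapping and the tangency arcs on $\partial B$ of different caps can meet only at endpoints, we get $3\cdot 2\alpha(\varrho)\leq 2\pi$, that is, $\alpha(\varrho)\leq\frac{\pi}{3}$. The two remaining facts to establish are:
\begin{itemize}
\item $\alpha(\varrho_0)=\frac{\pi}{3}$, because in $T_{w,r}$ the three caps tile the complement of the incircle and their tangency arcs meet.
\item $\alpha$ is strictly decreasing in $\varrho$ on $(0,\frac{w}{2})$.
\end{itemize}
Together these give $\varrho\geq\varrho_0$. The first fact should come out of the law of cosines by plugging in the formula of Lemma~\ref{lem:rho0}; in fact that formula is presumably derived exactly from the condition $\cos\alpha=\frac12$.

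I expect the monotonicity of $\alpha$ to be the main computational obstacle, since it has to be done uniformly in the three geometries. My first attempt would be to write $\cos\alpha$ explicitly, e.g.\ in $\R^2$
\[
\cos\alpha=\frac{(r-\varrho)^2+(w-\varrho)^2-r^2}{2(r-\varrho)(w-\varrho)},
\]
and differentiate in $\varrho$. For $\hyp^2$ and $\sph^2$ I would use the analogous $\cosh$/$\cos$ expressions. If the direct derivative turns out to be messy, a geometric monotonicity argument is the fallback. Increasing $\varrho$ moves the apex closer to $B$ relative to its radius, because $w-\varrho$ decreases while $\varrho$ increases. The tangent $r$-arcs then hug the circle more tightly, so the tangency points move toward the ray $pq$.

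For the equality case, assume $\varrho=\varrho_0$. Then $\alpha=\frac{\pi}{3}$ and the three tangency arcs cover $\partial B$ with consecutive caps sharing tangency points. This forces the apexes to be at mutual angles $\frac{2\pi}{3}$ about $p$, so $B\cup C_1\cup C_2\cup C_3$ is exactly a congruent copy of $T_{w,r}$, and hence $K\supseteq T_{w,r}$. For the reverse inclusion, at each shared tangency point $x$ of $B$ the boundary arcs of two adjacent caps are tangent to $\partial B$. By $r$-ball convexity, $K$ is contained in the $r$-disk supporting $K$ at $x$, and this disk must be the common bounding disk of the two caps. The three such disks intersect exactly in $T_{w,r}$, so $K=T_{w,r}$.
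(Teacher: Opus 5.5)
The paper gives no proof of Theorem~\ref{thm:Blaschke}. It imports the result from \cite{FRS26}, together with Lemmas~\ref{lem:nonoverlapping} and~\ref{lem:rho0}, so your proposal has to be judged on its own.

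Your reduction is sound. The check $\varrho_0<\frac w2$ from Lemma~\ref{lem:rho0} is correct, and non-overlapping caps do force $6\alpha(\varrho)\le 2\pi$. The triangle $[p,q,z]$ you use is the one the paper uses in Lemma~\ref{lem:angles-are-close}.

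\textbf{The angle is misidentified.} Since $B(z,r)\supseteq B$ is tangent to $B$ at the tangency point $s$, the centre $p$ lies in the relative interior of $[s,z]$. So the angle at $p$ in the triangle $[p,q,z]$ is $\theta=\pi-\alpha$, not $\alpha$. Compare $\angle(q_1,p,c)=\frac{2\pi}{3}+\frac12\angle(s_{1,2},p,s_{2,1})$ in the proof of Lemma~\ref{lem:angles-are-close}. Your displayed quotient is therefore $\cos\theta=-\cos\alpha$.

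At $\varrho_0$ this quotient equals $-\frac12$, not $\frac12$. Indeed, with $a=r-\varrho_0$ and $b=w-\varrho_0$ one has $a+b=\sqrt{(4r^2-(r-w)^2)/3}$ and $a-b=r-w$, hence $a^2+ab+b^2=r^2$. The quotient is also decreasing in $\varrho$. So if you differentiated your formula as written, your ``$\alpha$'' would come out increasing, and both bulleted facts would fail.

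After the correction both facts hold, and the monotonicity you left open is a short computation. Write $a=r-\varrho$ and $b=w-\varrho$, so $a'=b'=-1$. In $\R^2$,
\[
\frac{d}{d\varrho}\cos\theta=\frac{(a+b)\big((r-w)^2-r^2\big)}{2a^2b^2}.
\]
In $\hyp^2$ and $\sph^2$ respectively, $\frac{d}{d\varrho}\cos\theta$ equals
\[
-\frac{\cosh r\cosh a-\cosh b}{\sinh^2 a\,\sinh b}-\frac{\cosh r\cosh b-\cosh a}{\sinh a\,\sinh^2 b}
\quad\text{and}\quad
-\frac{\cos b-\cos r\cos a}{\sin^2 a\,\sin b}-\frac{\cos a-\cos r\cos b}{\sin a\,\sin^2 b}.
\]
All three expressions are negative because $0<a,b<r$ (and $r<\frac{\pi}{2}$ on $\sph^2$). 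Hence $\theta$ increases and $\alpha=\pi-\theta$ strictly decreases on $(0,\frac w2)$. Your geometric fallback argument is not needed.

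\textbf{A step in the equality case is unjustified.} The claim ``$K$ is contained in the $r$-disk supporting $K$ at $x$'' presupposes that the shared tangency points $x_1,x_2,x_3$ lie on $\partial K$. A priori they are only points of $\partial B$ and could be interior to $K$. You can close this as follows.

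Let $T'=B\cup C_1\cup C_2\cup C_3$. It is a copy of $T_{w,r}$, because at each $x_i$ the two adjacent cap arcs lie on the unique $r$-circle internally tangent to $\partial B$ at $x_i$. From $B\subseteq T'\subseteq K$ you get
\[
\partial B\cap\partial K\subseteq\partial B\cap\partial T'=\{x_1,x_2,x_3\}.
\]
The contact set of a maximal inscribed disk cannot lie in an open semicircle of $\partial B$; otherwise $B$ could be pushed into the interior of $K$ and enlarged. Any two of the $x_i$ do lie in such a semicircle, so all three belong to $\partial K$.

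Now the supporting $r$-disk of $K$ at $x_i$ contains $B$ and passes through $x_i$. It is therefore the internally tangent $r$-disk at $x_i$, and $K\subseteq T'$ follows.
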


For our main result, we will need the area formula for triangles in $\M^2$ using two sides and the enclosed angle. In $\hyp^2$ and in $\sph^2$ this is less known, so we cite it below. For the proof of the hyperbolic version, visit the PhD thesis of Frenkel \cite{Fre18}, while the proof of the spherical formula can be found in the PhD thesis of Sagmeister \cite{phd}.

\begin{lemma}\label{lemma:triangle-area}
Let $T\subset\M^2$ be a triangle of side lengths $a,b,c$ and opposite angles $\alpha,\beta,\gamma$. Then for the area of $T$ we have the following equations depending on $\M^2$:
$$\cot\left(\frac{\area\left(T\right)}{2}\right)=\frac{\coth\frac{a}{2}\coth\frac{b}{2}-\cos\gamma}{\sin\gamma}\text{ if }\M^2=\hyp^2,$$
$$\cot\left(\frac{\area\left(T\right)}{2}\right)=\frac{\cot\frac{a}{2}\cot\frac{b}{2}+\cos\gamma}{\sin\gamma}\text{ if }\M^2=\sph^2.$$
\end{lemma}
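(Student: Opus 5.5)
Both formulas are classical, and I would derive the spherical one first and then obtain the hyperbolic one by the same computation in the hyperboloid model. By Gauss--Bonnet, $\area(T)=\alpha+\beta+\gamma-\pi$ on $\sph^2$ and $\area(T)=\pi-\alpha-\beta-\gamma$ on $\hyp^2$. So the claim is a trigonometric identity expressing the angular excess (respectively defect) through the two sides $a,b$ and the enclosed angle $\gamma$. Writing $t_a=\tan\frac a2$ and $t_b=\tan\frac b2$, the spherical formula is equivalent to
\[
\tan\frac{\area(T)}{2}=\frac{t_at_b\sin\gamma}{1+t_at_b\cos\gamma}.
\]
This is what I would actually prove.

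\emph{Spherical case.} Place the vertex $C$ at $(0,0,1)$ and the vertex $A$ at $(\sin b,0,\cos b)$. Then $B=(\sin a\cos\gamma,\sin a\sin\gamma,\cos a)$. The key tool is the Van Oosterom--Strackee identity for unit vectors $u,v,w$ spanning a triangle of area $E$:
\[
\tan\frac E2=\frac{|\det(u,v,w)|}{1+u\cdot v+v\cdot w+w\cdot u}.
\]
To prove it, I would apply Gauss--Bonnet to the triangle and compare it with the argument of the quaternion product of the rotations taking $u\to v$, $v\to w$, $w\to u$. Alternatively, one can differentiate in one vertex and check the derivative. For these coordinates $\det(C,A,B)=\sin a\sin b\sin\gamma$, and the denominator equals
\[
1+\cos a+\cos b+\cos a\cos b+\sin a\sin b\cos\gamma=(1+\cos a)(1+\cos b)+\sin a\sin b\cos\gamma .
\]
Dividing the numerator and denominator by $(1+\cos a)(1+\cos b)$ and using $\sin x/(1+\cos x)=\tan\frac x2$ gives the displayed formula. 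Inverting it gives $\cot\frac{\area(T)}2=(\cot\frac a2\cot\frac b2+\cos\gamma)/\sin\gamma$.

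\emph{Hyperbolic case.} I would repeat the computation in the hyperboloid model $\{x:\ x_0^2-x_1^2-x_2^2=1,\ x_0>0\}$ with the Lorentz product. Take $C=(1,0,0)$, $A=(\cosh b,\sinh b,0)$ and $B=(\cosh a,\sinh a\cos\gamma,\sinh a\sin\gamma)$. The Lorentzian version of the triple-product identity reads
\[
\tan\frac{\pi-\alpha-\beta-\gamma}{2}=\frac{|\det(C,A,B)|}{1+\langle C,A\rangle+\langle A,B\rangle+\langle B,C\rangle}.
\]
This identity can be proved directly, or obtained from the spherical one by analytic continuation in the curvature, i.e.\ $a\mapsto ia$, $b\mapsto ib$. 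The sign flip from $\cos\gamma$ to $-\cos\gamma$ arises from the Lorentz product $\langle A,B\rangle=\cosh a\cosh b-\sinh a\sinh b\cos\gamma$. The same factorisation gives
\[
(1+\cosh a)(1+\cosh b)-\sinh a\sinh b\cos\gamma,
\]
and dividing by $\sinh a\sinh b\sin\gamma$ together with $(1+\cosh x)/\sinh x=\coth\frac x2$ yields the stated formula.

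\emph{Main obstacle.} The main work is justifying the triple-product identity itself, especially in the Lorentzian setting. If a clean self-contained derivation is wanted, I would fix $b$ and $\gamma$ and regard both sides as functions of $a$. Both vanish at $a=0$. One then checks that their derivatives agree, using $\partial_a\area(T)=\pm\sin\beta\cdot(\text{factor})$, which follows from the first variation of area together with the law of sines and the law of cosines. Since $\area(T)\in(0,\pi)$ in both geometries, $\cot$ is injective on $(0,\pi/2)$ for the half-area, so no branch ambiguity arises.
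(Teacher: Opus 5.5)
The paper does not prove this lemma. It refers to Frenkel's thesis for the hyperbolic formula and to Sagmeister's thesis for the spherical one. Your route is therefore a self-contained alternative: Gauss--Bonnet plus the Van Oosterom--Strackee triple-product identity, evaluated in normal coordinates in $\R^3$ and in the hyperboloid model. Its computations are correct. You have $\det(C,A,B)=\sin a\sin b\sin\gamma$, respectively $\sinh a\sinh b\sin\gamma$. The denominators factor as $(1+\cos a)(1+\cos b)+\sin a\sin b\cos\gamma$, respectively $(1+\cosh a)(1+\cosh b)-\sinh a\sinh b\cos\gamma$. These yield exactly the two stated formulas. Compared with the citation, this gives a uniform, checkable derivation in both geometries.

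Two points need tightening.

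\emph{The key identity is not a reduction.} Once the triangle is in your normal position, the triple-product identity is literally equivalent to the lemma, so quoting it proves nothing by itself. The quaternion-holonomy argument does prove it on $\sph^2$. On $\hyp^2$, however, ``analytic continuation $a\mapsto ia$'' is only a heuristic. Your derivative check is the actual proof there, and it closes cleanly:
\begin{itemize}
\item Write the claims as $\area(T)=2\arg\bigl(1+\tan\frac a2\tan\frac b2\,e^{i\gamma}\bigr)$ on $\sph^2$ and $\area(T)=2\arg\bigl(1-\tanh\frac a2\tanh\frac b2\,e^{-i\gamma}\bigr)$ on $\hyp^2$.
\item With $c=c(a)$ the third side, the law of cosines simplifies the $a$-derivative of the right-hand side to $\frac{\sin b\sin\gamma}{2\cos^2(c/2)}$, respectively $\frac{\sinh b\sin\gamma}{2\cosh^2(c/2)}$.
\item The first variation of area gives $\partial_a\area(T)=\sin\beta\tan\frac c2$, respectively $\sin\beta\tanh\frac c2$. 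By the law of sines this agrees with the previous expression.
\item Both sides tend to $0$ as $a\to0$.
\end{itemize}

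\emph{The area range on $\sph^2$ is wrong.} You assert that $\area(T)\in(0,\pi)$ in both geometries. This is false on $\sph^2$: a triangle in an open hemisphere can have area arbitrarily close to $2\pi$.
\begin{itemize}
\item The $\cot$ formula survives, because the half-area lies in $(0,\pi)$, where $\cot$ is injective.
\item Your $\tan$ form, however, has a vanishing denominator at $\area(T)=\pi$ and a negative one beyond it. This is the usual ``add $\pi$'' caveat of Van Oosterom--Strackee.
\item So run the ODE comparison on the angle itself, as in the $\arg$ form above, not on $\tan\frac{\area(T)}2$, which blows up.
\end{itemize}
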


We also need the following area formula for circular segments in $\M^2$.

\begin{lemma}\label{lem:area-of-circular-segment}
Let $t\in(0,2r)$. Then, the area of a circular segment $S_0(t)$ of an $r$-disk with chord length $t$ is
\[
\area(S_0(t))=\begin{cases}
    \frac{r^2}{2}(\gamma(t)-\sin\gamma(t)),&\text{if }\M^2=\R^2,\\
    (\cosh r-1)\gamma(t)-2\arccot\frac{\coth^2\frac{r}{2}-\cos\gamma(t)}{\sin\gamma(t)},&\text{if }\M^2=\hyp^2,\\
    (1-\cos r)\gamma(t)-2\arccot\frac{\cot^2\frac{r}{2}+\cos\gamma(t)}{\sin\gamma(t)},&\text{if }\M^2=\sph^2.
\end{cases}
\]
with
\[
\gamma(t)=\begin{cases}
    \arccos\Big(1-\frac{t^2}{2r^2}\Big),&\text{if }\M^2=\R^2,\\
    \arccos\Big(\frac{\cosh^2 r-\cosh^2 t}{\sinh^2 t}\Big),&\text{if }\M^2=\hyp^2,\\
    \arccos\Big(\frac{\cos^2 t-\cos^2 r}{\sin^2 t}\Big),&\text{if }\M^2=\sph^2.
\end{cases}
\]
\end{lemma}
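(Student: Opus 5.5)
The plan is to write the circular segment as a circular sector minus an isosceles triangle and compute both areas with the tools already stated. Let the $r$-disk have center $o$, and let the chord have endpoints $x,y$ with $d(x,y)=t\in(0,2r)$. Let $\gamma=\gamma(t)$ be the angle at $o$ of the geodesic triangle $oxy$. Since $t<2r$, the chord does not pass through $o$, so $\gamma\in(0,\pi)$. The segment $S_0(t)$ cut off by $[x,y]$ is then the minor one, and
\[
\area(S_0(t))=\area(\text{sector } oxy)-\area(\triangle oxy).
\]

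First I determine $\gamma$ from $t$ by the law of cosines in the isosceles triangle with legs $r,r$ and base $t$:
\begin{itemize}
\item In $\R^2$: $t^2=2r^2(1-\cos\gamma)$, so $\gamma=\arccos(1-\tfrac{t^2}{2r^2})$.
\item In $\hyp^2$: $\cosh t=\cosh^2 r-\sinh^2 r\cos\gamma$.
\item In $\sph^2$: $\cos t=\cos^2 r+\sin^2 r\cos\gamma$.
\end{itemize}
Solving gives $\cos\gamma=(\cosh^2 r-\cosh t)/\sinh^2 r$ in $\hyp^2$ and $\cos\gamma=(\cos t-\cos^2 r)/\sin^2 r$ in $\sph^2$. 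The hyperbolic and spherical expressions for $\gamma(t)$ printed in the statement do not match this. As $t\to0$ they do not tend to $\arccos 1=0$; for instance, in $\hyp^2$ the denominator $\sinh^2 t$ vanishes while the numerator tends to $\sinh^2 r\neq0$. So I would verify the printed form against the law of cosines and correct it if needed. The area formulas themselves only use $\gamma$, so this does not affect the rest of the argument.

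Next, the sector area is the fraction $\gamma/2\pi$ of the disk area, because the area element of a geodesic disk is rotationally invariant. The disk areas are $\pi r^2$, $2\pi(\cosh r-1)$ and $2\pi(1-\cos r)$ respectively. This gives sector areas $\tfrac{r^2}{2}\gamma$, $(\cosh r-1)\gamma$ and $(1-\cos r)\gamma$.

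Finally, the triangle area:
\begin{itemize}
\item In $\R^2$ it is $\tfrac12 r^2\sin\gamma$.
\item In $\hyp^2$ and $\sph^2$ I apply Lemma~\ref{lemma:triangle-area} with $a=b=r$ and included angle $\gamma$. This gives $\cot(\area/2)=(\coth^2\tfrac r2-\cos\gamma)/\sin\gamma$, respectively $(\cot^2\tfrac r2+\cos\gamma)/\sin\gamma$.
\end{itemize}
Since $\area(\triangle oxy)/2\in(0,\pi)$, with the usual inverse on $(0,\pi)$ we get $\area(\triangle oxy)=2\arccot(\cdots)$. In fact the hyperbolic triangle area is less than $\pi$, and a spherical triangle inside a hemisphere has area less than $2\pi$. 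Subtracting the triangle area from the sector area yields the three stated formulas.

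The only delicate points are bookkeeping. One must check that $\gamma<\pi$, so that the minor segment is meant. One must check that the $\arccot$ branch is correct, which follows because $\sin\gamma>0$ and the area bounds keep $\area/2$ in $(0,\pi)$. And one must reconcile the expression for $\gamma(t)$, which is where I expect the main care to be needed.
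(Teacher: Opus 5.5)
Your proof is correct and follows the paper's own argument: you write the segment as a sector minus the isosceles triangle with legs $r$, obtain $\gamma$ from the law of cosines, and compute the triangle's area via Lemma~\ref{lemma:triangle-area}. Your suspicion about the printed $\gamma(t)$ is also right. The law of cosines gives $\cos\gamma(t)=\frac{\cosh^2 r-\cosh t}{\sinh^2 r}$ in $\hyp^2$ and $\cos\gamma(t)=\frac{\cos t-\cos^2 r}{\sin^2 r}$ in $\sph^2$, which agrees with the paper's later expansion $\gamma(x)=x/\sinh r+O(x^3)$. So the stated expressions are typos, and the area formulas written in terms of $\gamma$ are unaffected.
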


\begin{proof}
We can calculate $\area(S_0(t))$ as the difference of the area of a circular sector of an $r$-disk with a chord of length $t$ and an isosceles triangle with base length $t$ and leg length $r$. We obtain the central angle of the circular sector $\gamma(t)$ from the law of cosines in the isosceles triangle. The circular sector with central angle $\gamma(t)$ has area
\[
\begin{cases}
\frac{r^2}{2}\gamma(t),&\text{if }\M^2=\R^2,\\
(\cosh r-1)\gamma(t),&\text{if }\M^2=\hyp^2,\\
(1-\cos r)\gamma(t),&\text{if }\M^2=\sph^2.
\end{cases}
\]
The area of the isosceles triangle with base $t$ and legs of length $r$ can be obtained from Lemma~\ref{lemma:triangle-area}.
\end{proof}

The idea to prove Theorem~\ref{thm:isominwidth:spindleconvex} is to compare the areas of cap-domains with three congruent non-overlapping caps. If the inradius of the $r$-ball convex body $\varrho$ is smaller than $\frac{w}{2}$, then any $r$-ball convex body in $\M^2$ contains such a cap-domain that can be rearranged in a rotationally symmetric way. In this symmetric cap-domain, we can find an $r$-disk hexagon $Q_{w,r,\varrho}$ (see Figure~\ref{fig:Qwrrho}), whose area is greater than that of the $r$-disk triangle $T_{w,r}$. We also need this result for the stability of the inequality. For a proof, visit Fodor, Robock, Sagmeister \cite{FRS26}.

\begin{lemma}\label{lem:areaofhexagonsincreases}
Let $B\subset\M^2$ be a circular disk centered at $p$ and of radius $\varrho\in\left[\varrho_0,\frac{w}{2}\right)$ where $\varrho_0$ denotes the inradius of $T_{w,r}$. Let $q_1,q_2,q_3$ be points of $\M^2$ such that $d\left(p,q_i\right)=w-\varrho$ and $\angle\left(q_i,p,q_j\right)=\frac{2\pi}{3}$ for $i\neq j$. We also introduce the points $t_i$ as the intersection of the half-line emanating from $q_i$ through $p_i$ and $\partial B$. Finally, we set $Q_{w,r,\varrho}$ to be the $r$-ball convex hull of $q_1,q_2,q_3,t_1,t_2,t_3$. Then, the area of $Q_{w,r,\varrho}$ is greater than the area of $T_{w,r}$.
\end{lemma}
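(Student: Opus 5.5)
Lemma~\ref{lem:areaofhexagonsincreases} is proved by a monotonicity argument in $\varrho$. Set
$$
f(\varrho)=\area\left(Q_{w,r,\varrho}\right),\qquad \varrho\in\left[\varrho_0,\tfrac{w}{2}\right).
$$
At the endpoint $\varrho=\varrho_0$ we claim that $Q_{w,r,\varrho_0}=T_{w,r}$. By Lemma~\ref{lem:rho0}, $\varrho_0$ is the inradius of $T_{w,r}$, and its incircle touches the three sides at their midpoints. The minimal width $w$ is attained by the supporting line through such a midpoint, and the opposite vertex lies at distance $w$ from that line along the symmetry axis. This gives $d(p,\text{vertex})=w-\varrho_0$.

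Hence the $q_i$ are the vertices of $T_{w,r}$, and the $t_i$ are the tangency (side-midpoint) points. The $r$-ball convex hull of the three vertices together with the three midpoints of the $r$-arcs is exactly $T_{w,r}$, because those midpoints already lie on the $r$-arcs joining the vertices. So $f(\varrho_0)=\area(T_{w,r})$, and it suffices to show that $f$ is strictly increasing on $[\varrho_0,w/2)$.

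To compute $f$, decompose $Q_{w,r,\varrho}$ using its threefold rotational symmetry. We have
$$
Q_{w,r,\varrho}=\text{(geodesic hexagon } q_1t_3q_2t_1q_3t_2\text{)}\;\cup\;\text{(six circular segments of $r$-disks)}.
$$
Here the labelling places $t_i$ opposite $q_i$. Each segment has as chord a geodesic side $[q_i,t_j]$ of length
$$
s(\varrho)=d(q_i,t_j).
$$
This length is given by the law of cosines in $\M^2$ applied to the triangle $p\,q_i\,t_j$, with sides $w-\varrho$ and $\varrho$ and enclosed angle $\pi/3$. By symmetry the hexagon is the union of six congruent triangles $p\,q_i\,t_j$. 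Therefore
$$
f(\varrho)=6\,A(\varrho)+6\,\area\bigl(S_0(s(\varrho))\bigr),
$$
where $A(\varrho)$ is the area of the triangle with sides $w-\varrho$ and $\varrho$ enclosing angle $\pi/3$. We evaluate $A(\varrho)$ by Lemma~\ref{lemma:triangle-area} (or by $\tfrac12 ab\sin\gamma$ in $\R^2$), and the segment area by Lemma~\ref{lem:area-of-circular-segment}. Everything is an explicit function of $\varrho$ in each of the three geometries.

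We then show $f'(\varrho)>0$. A cleaner route than brute-force differentiation is a first-variation argument. As $\varrho$ increases, the points $q_i$ move radially inward at unit speed and the points $t_i$ move radially outward at unit speed. The derivative of the area of the $r$-ball convex hull is a sum of vertex contributions: the speed of each vertex times the length of the projection of the turning of the outer normals at that vertex. The outward motion of each $t_i$ contributes positively and the inward motion of each $q_i$ negatively.

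The net sign reduces to comparing two quantities. The first is the exterior angle (in the sense of normal turning) at $t_i$; the second is the shorter "effective chord" contribution at $q_i$. In $\R^2$ this reduces to the inequality $\sin$ of half the angle at $t_i$ exceeding that at $q_i$, weighted appropriately. This holds because $t_i$ lies closer to $p$ than $q_i$ does (as $\varrho<w-\varrho$), which makes $Q$ flatter at $t_i$ and sharper at $q_i$. In $\hyp^2$ and $\sph^2$ the same comparison carries extra factors $\cosh$ or $\cos$, which we handle using the explicit formulas for $s(\varrho)$ and $\gamma(s)$.

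The main obstacle is this sign verification uniformly in $r\ge w$ and in all three curvatures, particularly near $\varrho\to w/2$ and in the spherical case with $r$ close to $\pi/2$. If the variational comparison proves messy, I would fall back on differentiating the closed-form expression above directly. I would then show positivity by reducing to a one-variable inequality in $x=\varrho$, with $w$ and $r$ as parameters, and checking it via monotonicity of $\arccot$ and the law of cosines.
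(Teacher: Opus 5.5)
Your reduction is the right one, and the paper itself only cites \cite{FRS26} for this lemma. You correctly show $Q_{w,r,\varrho_0}=T_{w,r}$, so the inequality can only be strict for $\varrho\in(\varrho_0,\frac w2)$; this is also how Lemma~\ref{lem:boundingeta} uses it. Your formula $f=6A+6\area(S_0(s))$ is exactly the dissection the paper works with.

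\textbf{The gap.} The step that carries the whole content of the lemma, $f'>0$, is never established. Your justification ("$t_i$ is closer to $p$, so $Q$ is flatter at $t_i$ and sharper at $q_i$") only yields that the interior angle of $Q$ at $t_i$ exceeds the one at $q_i$. That implies the comparison of sines of half-angles only if the larger half-angle is at most $\frac{\pi}{2}$, and you never invoke this. Your expectation of extra $\cosh$ or $\cos$ factors in $\hyp^2$ and $\sph^2$ (and the worry about $r\to\frac\pi2$) suggests the variation was not actually computed. There are no such factors.

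\textbf{How to close it.} Set $a=w-\varrho$ and $b=\varrho$. Let $\alpha,\beta$ be the angles of the triangle $p\,q_i\,t_j$ at $t_j$ and $q_i$, and let $\psi$ be the angle between the chord $[q_i,t_j]$ and the $r$-arc. Put $\mathrm{tn}(x)=x,\tanh x,\tan x$ in $\R^2,\hyp^2,\sph^2$, respectively. Then:
\begin{itemize}
\item the first variation formula gives $\partial s/\partial a=\cos\beta$ and $\partial s/\partial b=\cos\alpha$;
\item a thin-wedge computation gives $\partial A/\partial a=\mathrm{tn}(s/2)\sin\beta$ and $\partial A/\partial b=\mathrm{tn}(s/2)\sin\alpha$;
\item differentiating Lemma~\ref{lem:area-of-circular-segment} gives $\frac{d}{ds}\area(S_0(s))=\mathrm{tn}(s/2)\tan\psi$. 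Equivalently, use Gauss--Bonnet: $\area(S_0)=2\psi-\gamma\cos r$ on $\sph^2$ and $\gamma\cosh r-2\psi$ on $\hyp^2$.
\end{itemize}
Hence, uniformly in the three planes,
\[
f'(\varrho)=\frac{6\,\mathrm{tn}(s/2)}{\cos\psi}\bigl(\sin(\alpha+\psi)-\sin(\beta+\psi)\bigr),
\]
where $2(\alpha+\psi)$ and $2(\beta+\psi)$ are the interior angles of $Q_{w,r,\varrho}$ at $t_j$ and $q_i$.

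Two facts now finish the argument. First, $\beta<\alpha$, since $d(p,q_i)=w-\varrho>\varrho=d(p,t_j)$. Second, $\alpha+\psi\le\frac{\pi}{2}$, since $Q$ is convex; equality holds at $\varrho_0$, where $t_j$ is a side midpoint of $T_{w,r}$. Therefore $f'>0$ with no case analysis in $r$ or in the curvature.

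\textbf{One more thing to check.} The condition $\alpha+\psi\le\frac\pi2$ is precisely the statement that $\partial Q$ consists of the six short arcs, i.e.\ that $t_j$ is a genuine vertex. So your hexagonal decomposition should be verified rather than assumed. In $\R^2$ it reads $r(3\varrho-w)\ge w^2-3w\varrho+3\varrho^2$. This is an equality at $\varrho_0$ (it is the quadratic behind Lemma~\ref{lem:rho0}), and its left-minus-right side has derivative $3r+3w-6\varrho>0$ on $[\varrho_0,\frac w2)$.
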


\begin{figure}[H]
\centering
\begin{tikzpicture}[line cap=round,line join=round,>=triangle 45,x=1cm,y=1cm, scale=4.5]
\clip(-0.072,-0.22) rectangle (1.144,0.9208701508801388);
\draw [shift={(1,0)},line width=.8pt,dotted]  plot[domain=2.0943951023931953:3.141592653589793,variable=\t]({1*1*cos(\t r)+0*1*sin(\t r)},{0*1*cos(\t r)+1*1*sin(\t r)});
\draw [shift={(0,0)},line width=.8pt,dotted]  plot[domain=0:1.0471975511965976,variable=\t]({1*1*cos(\t r)+0*1*sin(\t r)},{0*1*cos(\t r)+1*1*sin(\t r)});
\draw [shift={(0.5,0.8660254037844388)},line width=.8pt,dotted]  plot[domain=4.1887902047863905:5.235987755982988,variable=\t]({1*1*cos(\t r)+0*1*sin(\t r)},{0*1*cos(\t r)+1*1*sin(\t r)});
\draw [shift={(0.9246504784333377,-0.06668223747786052)},line width=2pt]  plot[domain=2.0093721182089483:2.5224251945822695,variable=\t]({1*1*cos(\t r)+0*1*sin(\t r)},{0*1*cos(\t r)+1*1*sin(\t r)});
\draw [shift={(0.5954232724203452,0.834111922683636)},line width=2pt]  plot[domain=4.103767220602143:4.616820296975464,variable=\t]({1*1*cos(\t r)+0*1*sin(\t r)},{0*1*cos(\t r)+1*1*sin(\t r)});
\draw [shift={(1.020073750853683,0.09859571857866334)},line width=2pt]  plot[domain=2.7135625614007193:3.2266156377740405,variable=\t]({1*1*cos(\t r)+0*1*sin(\t r)},{0*1*cos(\t r)+1*1*sin(\t r)});
\draw [shift={(0.4045767275796549,0.8341119226836362)},line width=2pt]  plot[domain=4.807957663793914:5.321010740167235,variable=\t]({1*1*cos(\t r)+0*1*sin(\t r)},{0*1*cos(\t r)+1*1*sin(\t r)});
\draw [shift={(-0.02007375085368288,0.09859571857866313)},line width=2pt]  plot[domain=-0.08502298418424736:0.4280300921890742,variable=\t]({1*1*cos(\t r)+0*1*sin(\t r)},{0*1*cos(\t r)+1*1*sin(\t r)});
\draw [shift={(0.07534952156666211,-0.06668223747786037)},line width=2pt]  plot[domain=0.6191674590075236:1.1322205353808448,variable=\t]({1*1*cos(\t r)+0*1*sin(\t r)},{0*1*cos(\t r)+1*1*sin(\t r)});
\begin{scriptsize}
\draw [fill=black] (0,0) circle (.2pt);
\draw[color=black] (-0.015,-0.025) node {$v_2$};
\draw [fill=black] (1,0) circle (.2pt);
\draw[color=black] (1.015,-0.025) node {$v_3$};
\draw [fill=black] (0.5,0.8660254037844388) circle (.2pt);
\draw[color=black] (0.5,0.89) node {$v_1$};
\draw [fill=black] (0.5,0.2886751345948129) circle (.2pt);
\draw[color=black] (0.5,0.31614332483983393) node {$p$};
\draw [fill=black] (0.5,0.838675134594813) circle (.2pt);
\draw[color=black] (0.5,0.81) node {$q_1$};
\draw [fill=black] (0.5,-0.1613248654051871) circle (.2pt);
\draw[color=black] (0.5,-0.189) node {$t_1$};
\draw [fill=black] (0.9763139720814413,0.013675134594812843) circle (.2pt);
\draw[color=black] (0.955,0.038) node {$q_3$};
\draw [fill=black] (0.11028856829700262,0.513675134594813) circle (.2pt);
\draw[color=black] (0.1,0.54) node {$t_3$};
\draw [fill=black] (0.023686027918558724,0.013675134594812873) circle (.2pt);
\draw[color=black] (0.045,0.038) node {$q_2$};
\draw [fill=black] (0.8897114317029974,0.513675134594813) circle (.2pt);
\draw[color=black] (0.9,0.54) node {$t_2$};
\end{scriptsize}
\end{tikzpicture}
\caption{The $r$-disk hexagon $Q_{w,r,\varrho}$ with thick boundary and the $r$-disk triangle $T_{w,r}$ with dotted boundary}
\label{fig:Qwrrho}
\end{figure}

\section{Stability with respect to the Hausdorff metric}\label{sec:stability}

In this section, we verify our main result, Theorem~\ref{thm:stab:isominwidth:spindleconvex}. We set $\eta=\varrho(K)-\varrho_0$ and \[\overline{\varepsilon}=\min\left\{\frac{3}{4},\area\left(B\left(p,\frac{w}{2}\right)\right)-\area(T_{w,r})\right\}.
\]
To simplify the notation, we denote the inradius of $K$ by $\varrho$. First, we prove that $\eta$ can be bounded from above by a constant times $\varepsilon$.

\begin{lemma}\label{lem:boundingeta}
Let $K$ be an $r$-ball convex body of minimal width $0<w\leq r$ and $\area(K)\leq\area(T_{w,r})+\varepsilon$ with $\varepsilon\in(0,\overline{\varepsilon})$. Then
$$
\eta=\varrho(K)-\varrho_0\leq c_1\varepsilon
$$
for some positive constant $c_1$.
\end{lemma}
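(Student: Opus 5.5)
We argue by comparing $K$ with the symmetric hexagon $Q_{w,r,\varrho}$ of Lemma~\ref{lem:areaofhexagonsincreases}, and we show that its area grows at least linearly in $\varrho-\varrho_0$.

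First we rule out $\varrho\ge \frac w2$. In that case $K$ contains a disk of radius $\frac w2$, so $\area(K)\ge \area(B(p,\frac w2))>\area(T_{w,r})+\overline{\varepsilon}$, which contradicts the assumption by the choice of $\overline\varepsilon$. Hence $\varrho<\frac w2$. By Theorem~\ref{thm:Blaschke} we also have $\varrho\ge\varrho_0$, so $\eta\ge 0$ and $\varrho\in[\varrho_0,\frac w2)$.

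Next, Lemma~\ref{lem:nonoverlapping} gives an $r$-ball convex cap-domain $B\cup C_1\cup C_2\cup C_3\subseteq K$ whose caps are non-overlapping and have apexes at distance $w-\varrho$ from $p$. As in the proof of Theorem~\ref{thm:isominwidth:spindleconvex} in \cite{FRS26}, the area of such a cap-domain is at least the area of the rotationally symmetric one. That symmetric cap-domain contains $Q_{w,r,\varrho}$. We take this comparison from \cite{FRS26} as established; it is the only ingredient we import beyond the stated lemmas. Thus
\[
\area(T_{w,r})+\varepsilon\ \ge\ \area(K)\ \ge\ \area(Q_{w,r,\varrho}).
\]
It remains to show $f(\varrho):=\area(Q_{w,r,\varrho})-\area(T_{w,r})\ge c\,(\varrho-\varrho_0)$ on $[\varrho_0,\frac w2)$ for some $c>0$. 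Once this holds, $\eta\le \varepsilon/c$, and we set $c_1=1/c$.

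The main obstacle is this linear lower bound for $f$. At $\varrho=\varrho_0$, the hexagon $Q_{w,r,\varrho_0}$ is contained in $T_{w,r}$, because the $t_i$ lie on the incircle of $T_{w,r}$ and the $q_i$ on its sides or vertices. By the monotonicity/area comparison in Lemma~\ref{lem:areaofhexagonsincreases} extended to the closed endpoint, we get $f(\varrho_0)\ge 0$; in fact equality holds in the degenerate configuration. We then use the explicit formulas to write $f$ as a smooth function of $\varrho$. The hexagon decomposes into six congruent geodesic triangles with apex $p$, each with sides $w-\varrho$ and $\varrho$ and enclosed angle $\pi/3$, together with six circular $r$-segments whose chords are $d(q_i,t_j)$. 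The triangle areas come from Lemma~\ref{lemma:triangle-area}, and the segment areas come from Lemma~\ref{lem:area-of-circular-segment} with $t=d(q_i,t_j)$, obtained from the law of cosines.

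We then check $f'(\varrho)>0$ on the compact interval $[\varrho_0,\frac w2]$; the formulas are analytic there. Strict positivity of $f$ on $(\varrho_0,\frac w2]$ is given by Lemma~\ref{lem:areaofhexagonsincreases}. Combined with $f'(\varrho_0)>0$, this yields $\inf_{\varrho\in(\varrho_0,w/2]} f(\varrho)/(\varrho-\varrho_0)>0$ by continuity and compactness. Near $\varrho_0$ the quotient tends to $f'(\varrho_0)$, and away from $\varrho_0$ it is bounded below by $\min f/(w/2)$. Verifying $f'(\varrho_0)>0$ is the delicate computation. If it proves messy in all three geometries, the fallback is a geometric first-variation argument: increasing $\varrho$ pushes the $t_i$ outward by $d\varrho$ along arcs of positive length, which gains area of order $d\varrho$. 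The loss from moving the $q_i$ inward by $d\varrho$ affects only small corner regions near the vertices, and we would need to show this loss is strictly dominated by the gain.
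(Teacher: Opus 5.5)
Your route is the paper's: bound $\area(K)$ from below by $\area(Q_{w,r,\varrho})$, write $f(\varrho)=\area(Q_{w,r,\varrho})-\area(T_{w,r})$ through the six triangles and six $r$-segments, and extract linear growth from the strict minimum at $\varrho_0$. The paper does this last step by citing \cite{FS}*{Lemma 3.5}. The gap is that you stop exactly where the content lies. A $C^1$ function with a unique minimum at the endpoint need not grow linearly: if $f$ behaved like $(\varrho-\varrho_0)^2$, you would only get $\eta\le c\sqrt{\varepsilon}$. So everything rests on $f'(\varrho_0)>0$, which you call ``the delicate computation'' and leave undone. The first-variation fallback does not replace it. The loss from pulling the $q_i$ inward is not confined to small corner regions, since all six triangles and all six segments change at first order. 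The sign of the total is decided by an actual inequality between these rates.

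That computation is short, at least in $\R^2$. With $a=w-\varrho$, $b=\varrho$ and chord $t$, one has $t^2=a^2+b^2-ab$, hence $\frac{d}{d\varrho}t^2=-3(a-b)$. Also $\frac{d}{dt}\area(S_0(t))=\frac{t^2}{2\sqrt{4r^2-t^2}}$, so
\[
f'(\varrho)=\frac{3(w-2\varrho)}{2}\Bigl(\sqrt3-\frac{3t}{\sqrt{4r^2-t^2}}\Bigr).
\]
For $\varrho<\frac w2$ this is positive if and only if $t<r$.

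At $\varrho_0$ one has $q_i=v_i$ and $t_i$ equal to the side midpoints, so $Q_{w,r,\varrho_0}=T_{w,r}$ exactly. This is why $f(\varrho_0)=0$; no ``degenerate configuration'' or extension of Lemma~\ref{lem:areaofhexagonsincreases} is involved. Moreover, $t(\varrho_0)$ is the chord of half a side arc. The width is $w=\frac{\sqrt3}{2}s+r-\sqrt{r^2-s^2/4}$, which is increasing in the side chord $s$ and equals $r$ at $s=r$. Hence $w\le r$ gives $s\le r$, so the half arc subtends at most $\frac{\pi}{6}$ and $t(\varrho_0)\le 2r\sin\frac{\pi}{12}<r$. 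Since $t$ decreases in $\varrho$, $f'>0$ on $[\varrho_0,\frac w2)$.

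The same first-order computation has to be carried out in $\hyp^2$ and $\sph^2$ from Lemma~\ref{lemma:triangle-area} and Lemma~\ref{lem:area-of-circular-segment}. Note also that $f'(\frac w2)=0$ in every geometry, because the data are symmetric under $a\leftrightarrow b$. So your claim that $f'>0$ on the closed interval is false at the right endpoint. Lemma~\ref{lem:areaofhexagonsincreases} also does not cover $\varrho=\frac w2$. Near that endpoint, use monotonicity of $f$, or simply $\area(K)\ge\area(B(p,\varrho))$ together with the choice of $\overline{\varepsilon}$.
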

\begin{proof}
We have $\eta\in\left[0,\frac{w}{2}-\varrho_0\right]$ with $0<\frac{w}{2}-\varrho_0$ from Lemma~\ref{lem:rho0} and Theorem~\ref{thm:Blaschke}. Consider the $r$-ball convex hexagon $Q_{w,r,\varrho}$ described in Lemma~\ref{lem:areaofhexagonsincreases}. The area of $Q_{w,r,\varrho}$ can be expressed as a function of $\eta$; let $f(\eta)=\area(Q_{w,r,\varrho})$. To find $f(\eta)$, we can dissect $Q_{w,r,\varrho}$ into six congruent copies of triangles with sides $\varrho$ and $w-\varrho$ and an enclosed angle $\tfrac{\pi}{3}$, and six congruent circular segments of an $r$-disk, where the chord length is the opposite side of the angle $\tfrac{\pi}{3}$ in these triangles and can be derived from the law of cosines. Hence, as a consequence of Lemma~\ref{lemma:triangle-area} and Lemma~\ref{lem:area-of-circular-segment}, $f\colon\left[0,\frac{w}{2}-\varrho_0\right]\to\R$ is a continuously differentiable function that attains its unique minimum at 0 due to Lemma~\ref{lem:areaofhexagonsincreases}. Then, by \cite{FS}*{Lemma 3.5}, there is a positive constant $c_1$ such that
$$
\eta\leq c_1(f(\eta)-f(0))=c_1(\area(Q_{w,r,\varrho})-\area(T_{w,r}))\leq c_1(\area(K)-\area(T_{w,r}))=c_1\varepsilon.
$$
\end{proof}

Since $\varepsilon<\overline{\varepsilon}$, $K$ contains a cap-domain $C_{w,r,\varrho}$ that can be described as the $r$-ball convex hull of the incircle $B=B(p,\varrho)$ of $K$ and the points $q_1,q_2,q_3$ where these three points determine three congruent non-overlapping regions in positive orientation, bounded by circular arcs. We choose $\phi$ to be an isometry such that $\phi(p)$ is the center of $B$ and $q_1\in[\phi(p),\phi(v_1)]$, where $v_1,v_2,v_3$ are the vertices of $T_{w,r}$. We claim the following.

\begin{lemma}\label{lem:symmetriccapdomain-triangle}
Let $Q_{w,r,\varrho}$ be an $r$-ball convex hexagon with vertices $\widetilde{q}_1,\widetilde{t}_3,\widetilde{q}_2,\widetilde{t}_1,\widetilde{q}_3,\widetilde{t}_2$ such that $d(p,\widetilde{q}_i)=w-\varrho$, $d(p,\widetilde{t}_i)=\varrho$ for $i\in\{1,2,3\}$ and $\widetilde{q}_1=q_1$. Let $\widetilde{C}$ be the $r$-ball convex hull of $B$ and the points $\widetilde{q}_1,\widetilde{q}_2,\widetilde{q}_3$. Then
$$
\delta(Q_{w,r,\varrho},\phi(T_{w,r}))=\delta(\widetilde{C},\phi(T_{w,r}))=\eta.
$$
\end{lemma}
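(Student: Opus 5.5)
Throughout, write $T=\phi(T_{w,r})$ with vertices $v_i$ (we identify $v_i$ with $\phi(v_i)$). The configuration is rotationally symmetric of order three about $p$, and the plan is to reduce both Hausdorff distances to one-dimensional comparisons along the symmetry rays through $p$.

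\textbf{Setup.} First fix the positions of the points. Both $T$ and the hexagon are invariant under rotation by $\tfrac{2\pi}{3}$ about $p$. Since $\widetilde q_1=q_1\in[p,v_1]$, each $\widetilde q_i$ lies on the ray $[p,v_i]$. Each $\widetilde t_i$ lies on the opposite ray, which passes through the midpoint $m_i$ of the side of $T$ opposite $v_i$.

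\textbf{Distances along the symmetry rays.} Using the facts from \cite{FRS26} that the incircle of $T$ is centered at $p$ with radius $\varrho_0$, we have $d(p,m_i)=\varrho_0$. Since the minimal width $w$ of $T$ is attained at the side midpoints, $d(p,v_i)=w-\varrho_0$. Hence
\[
d(\widetilde q_i,v_i)=(w-\varrho_0)-(w-\varrho)=\eta
\qquad\text{and}\qquad
d(\widetilde t_i,m_i)=\varrho-\varrho_0=\eta .
\]
So $\widetilde q_i$ lies inside $T$ at distance $\eta$ from $v_i$, while $\widetilde t_i$ lies outside $T$ at distance $\eta$ beyond $m_i$. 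This gives the lower bound: $d(v_i,\widetilde C)\ge\eta$ and $d(v_i,Q)\ge\eta$, because $\widetilde C$ and $Q=Q_{w,r,\varrho}$ are contained in the sector-like region whose farthest point in the direction of $v_i$ is $\widetilde q_i$. One still has to check that the nearest point of $Q$ or $\widetilde C$ to $v_i$ is $\widetilde q_i$. This holds because $v_i$ lies on the bisecting geodesic, and the two boundary $r$-arcs through $\widetilde q_i$ make an angle at $\widetilde q_i$ no larger than the angle of $T$ at $v_i$. Symmetrically, $d(\widetilde t_i,T)=\eta$, since the nearest point of $T$ to $\widetilde t_i$ is $m_i$: the side arc of $T$ is perpendicular to the ray $[p,m_i]$ at $m_i$.

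\textbf{Upper bound.} We show $Q\subseteq T^{(\eta)}$ and $T\subseteq Q^{(\eta)}$, and likewise for $\widetilde C$.
\begin{itemize}
\item $Q\subseteq T^{(\eta)}$: we use $r$-ball convexity. $T^{(\eta)}$ is the parallel body of an $r$-disk triangle, bounded by $(r+\eta)$-arcs and circular arcs of radius $\eta$ about the $v_i$, and it contains the six points $\widetilde q_i,\widetilde t_i$. Since $r+\eta\ge r$, an outer parallel domain of an $r$-ball convex set is still $r$-ball convex. Hence the $r$-ball convex hull of these points, namely $Q$, lies in $T^{(\eta)}$.
\item $\widetilde C\subseteq T^{(\eta)}$: additionally $B=B(p,\varrho)=B(p,\varrho_0)^{(\eta)}\subseteq T^{(\eta)}$, so the same argument applies.
\item $T\subseteq Q^{(\eta)}$: write $T$ as the union of the three regions near the vertices, each bounded by two side-halves. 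In each region, every point is within $\eta$ of the two boundary arcs of $Q$ through $\widetilde q_i$. This follows by translating each side arc of $T$ along its perpendicular direction by $\eta$: the side through $m_i$ moves to an $r$-arc through $\widetilde t_i$ that lies in $Q$ near the symmetry axis. Since $Q\subseteq\widetilde C$, this also gives $T\subseteq\widetilde C^{(\eta)}$.
\item Finally, $\widetilde t_i\in Q\subseteq\widetilde C$ and $d(\widetilde t_i,T)=\eta$ show that the value $\eta$ is attained.
\end{itemize}

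\textbf{Main obstacle.} The hard step is $T\subseteq Q^{(\eta)}$, that is, controlling points of $T$ near the sides away from the symmetry axes. Here the arcs of $Q$ are not parallel to the sides of $T$, and the argument must work uniformly in $\R^2$, $\hyp^2$ and $\sph^2$. My first attempt is to show that the farthest point of $T$ from $Q$ is a vertex $v_i$. The distance function $x\mapsto d(x,Q)$ is convex along geodesics outside $Q$ in $\R^2$ and $\hyp^2$, and suitably so in $\sph^2$ for small sets. Restricted to a side arc of $T$, it therefore attains its maximum at an endpoint $v_i$, where its value is $\eta$. If convexity along $r$-arcs fails, I would instead compare with explicit parallel arcs through the law of cosines.
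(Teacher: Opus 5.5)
Your overall plan is the same as the paper's. You use a point at distance exactly $\eta$ for the lower bound, and the containments $Q_{w,r,\varrho}\subseteq\widetilde C\subseteq T^{(\eta)}$ and $T\subseteq Q_{w,r,\varrho}^{(\eta)}$ for the upper bound; the paper simply asserts these as ``easy to see''. Your lower bound via $\widetilde t_i$ is correct, because the tangent line at $m_i$ supports $T$. It is cleaner than the paper's appeal to $d(q_1,\phi(v_1))$. The gap is in the upper bound, in two places.

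\textbf{First gap.} Your proof of $Q_{w,r,\varrho}\subseteq T^{(\eta)}$ and $\widetilde C\subseteq T^{(\eta)}$ rests on the claim that an outer parallel domain of an $r$-ball convex set is again $r$-ball convex. That claim is false. As you note yourself, $\partial T^{(\eta)}$ contains arcs of radius $r+\eta$, which are flatter than $r$-circles. Two nearby points on such an arc therefore span an $r$-spindle that sticks out of $T^{(\eta)}$. Ball convexity passes from radius $r$ to larger radii, not to smaller ones. So the step ``$T^{(\eta)}$ contains the six points, hence their $r$-ball convex hull'' has no justification.

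\textbf{Second gap.} The containment $T\subseteq Q_{w,r,\varrho}^{(\eta)}$, which you rightly call the main obstacle, is not proved either. An outward translate of the side through $m_i$ touches $Q_{w,r,\varrho}$ at the corner $\widetilde t_i$ and lies outside $Q_{w,r,\varrho}$ near it. Convexity of $d(\cdot,Q_{w,r,\varrho})$ along geodesics says nothing about its behaviour along the circular sides of $T$.

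\textbf{Minor point.} Your claim that the angle of $Q_{w,r,\varrho}$ at $\widetilde q_i$ is at most the angle of $T$ at $v_i$ is also false. For $r=w$ and $\eta=0.05w$ one gets roughly $138^\circ$ versus $120^\circ$. You do not need it: $\widetilde C\subseteq B(p,w-\varrho)$ already gives $d(v_i,\widetilde C)\ge\eta$.

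\textbf{How to repair both containments.} Compare centres of $r$-disks. Let $c_j$ be the centre of the $r$-disk carrying the side of $T$ through $m_j$. It lies on the ray from $p$ through $v_j$, with $d(p,c_j)=r-\varrho_0$.

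For $\widetilde C\subseteq T^{(\eta)}$, take $c_j^\eta\in[p,c_j]$ with $d(c_j^\eta,c_j)=\eta$.
\begin{itemize}
\item The disk $B(c_j^\eta,r)$ contains $B(p,\varrho)$, being internally tangent to it at $\widetilde t_j$.
\item It also contains every $\widetilde q_i$. For $i\neq j$, compare by the law of cosines, with angle $2\pi/3$ at $p$, against the triangle $p,c_j,v_i$. That triangle has third side $r$, and its two sides at $p$ are each longer by $\eta$.
\item Hence $\widetilde C\subseteq B(c_j^\eta,r)\subseteq B(c_j,r+\eta)$. Also $\widetilde C\subseteq B(p,w-\varrho)$.
\item Let $y\in\widetilde C\setminus T$. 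Its nearest point in $T$ cannot be a vertex $v_i$. Otherwise the angle at $v_i$ in the triangle $y,v_i,p$ would be at least $\pi/2$, forcing $d(p,y)>w-\varrho_0>w-\varrho$.
\item So the nearest point lies inside a side, and $d(y,T)=d(y,c_j)-r\le\eta$.
\end{itemize}

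For $T\subseteq Q_{w,r,\varrho}^{(\eta)}$, let $T'$ be the $r$-ball convex hull of $\widetilde q_1,\widetilde q_2,\widetilde q_3$. It lies in $Q_{w,r,\varrho}$.
\begin{itemize}
\item The side centres $c'_j$ of $T'$ lie on the same rays as the $c_j$.
\item The law of cosines together with $r\ge w$ (i.e.\ $d(p,c_j)\ge d(p,v_j)$) gives $0<d(c_j,c'_j)\le\eta$.
\item The same projection argument then yields $T\subseteq T'^{(\eta)}$. If the nearest point of $y\in T$ lies inside a side of $T'$, then $d(y,T')=d(y,c'_j)-r\le d(c_j,c'_j)\le\eta$, since $y\in B(c_j,r)$.
\item The part of $T$ in the normal cone of $T'$ at $\widetilde q_i$ stays within $\eta$ of $\widetilde q_i$. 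Its points farthest from $\widetilde q_i$ are among $v_i$ and the two points where the edges of that cone leave $T$. These are at distance $\eta$ and at most $d(c_k,c'_k)\le\eta$ from $\widetilde q_i$, respectively.
\end{itemize}
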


\begin{proof}
By the choice of $\phi$, 
$$\delta(Q_{w,r,\varrho},\phi(T_{w,r}))\geq d(q_1,\phi(v_1))=\eta,$$ and 
$$\delta(\widetilde{C},\phi(T_{w,r}))\geq d(q_1,\phi(v_1))=\eta.$$ Also, it is easy to see that $T_{w,r}\subseteq Q_{w,r,\varrho}^{(\eta)}\subseteq\widetilde{C}^{(\eta)}$ and $Q_{w,r,\varrho}\subseteq\widetilde{C}\subseteq T_{w,r}^{(\eta)}$; hence, $\delta(Q_{w,r,\varrho},\phi(T_{w,r}))\leq\eta$ and $\delta(\widetilde{C},\phi(T_{w,r}))\leq\eta$.
\end{proof}

Next, we show that $C$ is close to $\widetilde{C}$. To see this, we notice the following.

\begin{lemma}\label{lem:angles-are-close}
Let $t_1,t_2,t_3$ be points of $\partial K\cap\partial B$ such that $q_1,t_3,q_2,t_1,q_3,t_2$ are in positive cyclic order. For $\{i,j,k\}=\{1,2,3\}$, let $C_i$ be the cap with peak $q_i$, $C_i$ is attached to $B$ at two points $s_{i,j}$ and $s_{i,k}$ such that $t_k$ is on the arc of $\partial B\cap \partial C$ between $s_{i,k}$ and $s_{j,k}$. Then, there is a positive constant $c_2$ such that
\begin{equation}\label{eq:anglebound}
\angle(s_{i,j},p,s_{k,j})\leq c_2\eta.
\end{equation}
\end{lemma}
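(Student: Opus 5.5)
The plan is to reduce \eqref{eq:anglebound} to a one-variable computation: the angular extent of a single cap, seen from $p$, is a smooth function of $\varrho$ alone. Because the three caps are non-overlapping, the three gaps between consecutive caps then add up to $2\pi$ minus three times this extent.

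\textbf{Step 1: the opening angle of one cap.} Fix a cap $C_i$ with apex $q_i$, where $d(p,q_i)=w-\varrho$. Each of its two convex $r$-arcs touches $B$ at a point $s$. Since $B$ lies in the $r$-disk bounded by that arc, the centre $o$ of the arc lies on the ray from $p$ through $s$ with $d(p,o)=r-\varrho$, and $d(o,q_i)=r$. In the triangle $p,o,q_i$ the three sides are therefore $r-\varrho$, $w-\varrho$ and $r$. The law of cosines of $\M^2$ gives the angle $\beta(\varrho)=\angle(s,p,q_i)$ explicitly. By symmetry the cap subtends the angle $\alpha(\varrho)=2\beta(\varrho)$ at $p$. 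This depends only on $\varrho$ (with $w,r$ fixed), not on $i$ or on the position of $q_i$.

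\textbf{Step 2: the total gap.} The caps are non-overlapping, so the three arcs of $\partial B$ covered by them are disjoint. The three angles $\angle(s_{i,j},p,s_{k,j})$ are exactly the complementary arcs, each containing one $t_j$. Hence they are nonnegative and
\[
\sum_{j}\angle(s_{i,j},p,s_{k,j})=2\pi-3\alpha(\varrho).
\]
For $\varrho=\varrho_0$ the symmetric cap-domain is exactly $T_{w,r}$, and its caps touch at the midpoints of the sides. So $3\alpha(\varrho_0)=2\pi$, which is consistent with the formula of Lemma~\ref{lem:rho0}, and one can check it directly from the law of cosines there.

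\textbf{Step 3: Lipschitz estimate.} Each single gap is at most the sum, so it is at most $3(\alpha(\varrho_0)-\alpha(\varrho))$. The map $\varrho\mapsto\alpha(\varrho)$ is continuously differentiable on $[\varrho_0,\tfrac w2)$, because the relevant triangle stays nondegenerate there: its sides satisfy strict triangle inequalities since $\varrho<\tfrac w2\le\tfrac r2$. The mean value theorem then gives $\alpha(\varrho_0)-\alpha(\varrho)\le M\eta$ on a compact subinterval $[\varrho_0,\varrho_0+\eta^*]$, where $M$ is the maximum of $|\alpha'|$ there. For $\eta>\eta^*$ we use the trivial bound: each gap is at most $2\pi\le(2\pi/\eta^*)\eta$. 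This proves the claim with $c_2=\max\{3M,2\pi/\eta^*\}$. Alternatively, Lemma~\ref{lem:boundingeta} with $\varepsilon<\overline{\varepsilon}$ already keeps $\eta$ bounded away from $\tfrac w2-\varrho_0$ if $\overline\varepsilon$ is shrunk.

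\textbf{Main obstacle.} The delicate point is the uniform control of $\alpha'$, i.e. making sure the law-of-cosines expression does not degenerate. This means checking, in each of the three geometries (using $r<\tfrac\pi2$ on $\sph^2$), that the cosine of $\beta$ stays strictly inside $(-1,1)$ on the chosen compact range of $\varrho$. I would do this from the explicit formula, e.g.
\[
\cos\beta=\frac{(r-\varrho)^2+(w-\varrho)^2-r^2}{2(r-\varrho)(w-\varrho)}
\]
in $\R^2$, with the analogous expressions in $\hyp^2$ and $\sph^2$.
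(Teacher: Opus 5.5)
Your proof is correct and follows the paper's argument: the law of cosines in the triangle formed by $p$, $q_i$ and the centre of a bounding $r$-arc, with sides $w-\varrho$, $r-\varrho$, $r$, followed by linearization at $\varrho_0$. Your bound ``each gap $\le$ total gap $=2\pi-3\alpha(\varrho)$'' is what justifies the paper's ``we may assume threefold rotational symmetry''. Your mean-value/compactness step replaces the paper's local Taylor expansion and gives a uniform bound on all of $[\varrho_0,\frac w2)$.

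One slip needs fixing. By internal tangency, $p$ lies between $s$ and the centre $o$; the paper states that $p$ is in the relative interior of $[s_{1,2},c]$. So $o$ is on the ray from $p$ \emph{opposite} to $s$, and the angle at $p$ in your triangle is $\pi-\beta$, which equals $\frac{2\pi}{3}$ at $\varrho_0$. Your displayed fraction is therefore $-\cos\beta$, not $\cos\beta$; read literally it would give $3\alpha(\varrho_0)=4\pi$. The correction does not affect the Lipschitz estimate.
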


\begin{proof}
We observe that we can assume that the cap-domain $B\cup C_1\cup C_2\cup C_3$ has a threefold rotational symmetry. Let $c$ denote the center of the circle of radius $r$ containing the arc between $q_1$ and $s_{1,2}$ at the boundary of $C_1$. Since the circle of radius $r$ centered at $c$ is tangent to $B$ in $s_{1,2}$, $p$ is in the relative interior of $[s_{1,2},c]$. The triangle $[q_1,p,c]$ has sides $d(q_1,p)=w-\varrho_0-\eta$, $d(p,c)=r-\varrho_0-\eta$ and $d(c,q_1)=r$, while $\angle(q_1,p,c)=\frac{2\pi}{3}+\frac{1}{2}\angle(s_{1,2},p,s_{2,1})$. Now we can express $\angle(s_{1,2},p,s_{2,1})$ from the law of cosines.
\[
\cos\left(\frac{2\pi}{3}+\frac{1}{2}\angle(s_{1,2},p,s_{2,1})\right)=
\begin{cases}
\frac{(r-\varrho_0-\eta)^2+(w-\varrho_0-\eta)^2-r^2}{2(r-\varrho_0-\eta)(w-\varrho_0-\eta)}, & \text{if }\M^2=\R^2,\\
\frac{\cosh(r-\varrho_0-\eta)\cosh(w-\varrho_0-\eta)-\cosh r}{\sinh(r-\varrho_0-\eta)\sinh(w-\varrho_0-\eta)}, & \text{if }\M^2=\hyp^2,\\
\frac{\cos r-\cos(r-\varrho_0-\eta)\cos(w-\varrho_0-\eta)}{\sin(r-\varrho_0-\eta)\sin(w-\varrho_0-\eta)}, & \text{if }\M^2=\sph^2.
\end{cases}
\]
After rearranging the equation and using Taylor expansion, we obtain
\[
\angle(s_{1,2},p,s_{2,1})=
\begin{cases}
\frac{6\sqrt{3r^2+2rw-w^2}}{2rw-w^2}\eta+O(\eta^2), & \text{if }\M^2=\R^2,\\
\sqrt{\frac{3}{2}}\frac{\sqrt{16\cosh(2r)+\cosh(2(r-w)-8\cosh(2r-w)-8\cosh w-1}}{\cosh r-\cosh(r-w)}\eta+O(\eta^2), & \text{if }\M^2=\hyp^2,\\
\frac{3\sqrt{9-(4\cos r-\cos(r-w))^2}}{4\sin\left(r-\frac{w}{2}\right)\sin\frac{w}{2}}\eta+O(\eta^2), & \text{if }\M^2=\sph^2.
\end{cases}
\]
\end{proof}

\begin{lemma}\label{lem:distance-of-capdomains}
There is a positive constant $c_3$ such that $\delta(C,\widetilde{C})\leq c_3\eta$.
\end{lemma}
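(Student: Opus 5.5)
The plan is to reduce $\delta(C,\widetilde{C})$ to angular displacements of the apexes about $p$, and to bound these by Lemma~\ref{lem:angles-are-close}. Write $C=B\cup C_1\cup C_2\cup C_3$ and $\widetilde{C}=B\cup\widetilde{C}_1\cup\widetilde{C}_2\cup\widetilde{C}_3$. Here $C_i$ and $\widetilde{C}_i$ are the caps with apexes $q_i$ and $\widetilde{q}_i$, all at distance $w-\varrho$ from $p$. A cap with apex at distance $w-\varrho$ from $p$ is determined up to rotation about $p$ by its apex: it is bounded by the two $r$-arcs through the apex tangent to $B$. Hence $C_i$ is the image of $\widetilde{C}_i$ under the rotation $R_i$ about $p$ by the oriented angle $\theta_i=\angle(\widetilde{q}_i,p,q_i)$.

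\emph{Step 1 (rotation estimate).} Every point $x$ of $B\cup\widetilde{C}_i$ satisfies $d(x,p)\le w-\varrho$. For such $x$, the distance $d(x,R_i x)$ is at most the chord of a circle of radius $w-\varrho$ subtending angle $|\theta_i|$. In each of $\R^2$, $\hyp^2$, $\sph^2$ this chord is at most $c|\theta_i|$, where $c=w$, $\sinh w$ or $\sin w$ (note $w<\pi/2$ on $\sph^2$). Since $B$ is invariant under $R_i$, we get $\delta(B\cup C_i,B\cup\widetilde{C}_i)\le c|\theta_i|$. The Hausdorff distance of finite unions is at most the maximum of the pairwise distances, so
$$\delta(C,\widetilde{C})\le \max_i \delta(B\cup C_i,B\cup\widetilde{C}_i)\le c\max_i|\theta_i|.$$

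\emph{Step 2 (angular bound).} By Lemma~\ref{lem:angles-are-close} and its proof, each cap $C_i$ is seen from $p$ under the angle $\angle(s_{i,j},p,s_{i,k})$. This angle equals $\beta=\frac{2\pi}{3}-g$, where $g=\angle(s_{1,2},p,s_{2,1})$ is the gap angle of the symmetric configuration with the same inradius. The value of $\beta$ depends only on $w,r,\varrho$, not on the position of the cap. The caps are non-overlapping, so their angular sectors at $p$ are disjoint. Therefore the three gaps between consecutive sectors are nonnegative and sum to $2\pi-3\beta=3g\le 3c_2\eta$.

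It follows that the angle $\angle(q_i,p,q_j)$ between consecutive apexes lies in $[\beta,\beta+3g]$, so it differs from $\frac{2\pi}{3}$ by at most $3g$. Since $\widetilde{q}_1=q_1$ and the $\widetilde{q}_i$ are spaced exactly $\frac{2\pi}{3}$ apart in the same cyclic order, summing at most two consecutive deviations gives $|\theta_i|\le 6g\le 6c_2\eta$.

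Combining the two steps yields $\delta(C,\widetilde{C})\le 6cc_2\eta$, so $c_3=6cc_2$ works.

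The main obstacle is Step 2. It requires justifying that the angular extent of a cap depends only on $\varrho$, via the tangency computation in Lemma~\ref{lem:angles-are-close}, and that non-overlap of the caps really forces disjointness of these sectors. Disjointness of the sectors holds because each cap lies in the sector spanned by its two tangency points $s_{i,j},s_{i,k}$, as its boundary arcs are convex and touch $B$ there.
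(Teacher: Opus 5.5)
Your proof is correct and follows the same route as the paper. Both reduce $\delta(C,\widetilde{C})$ to the apex displacements $d(q_i,\widetilde{q}_i)$, bound the angular offsets $\angle(q_i,p,\widetilde{q}_i)$ by a multiple of $\eta$ via Lemma~\ref{lem:angles-are-close}, and convert angle to distance with an elementary chord estimate in each plane. Your rotation argument and your gap-sum argument (gaps in $[0,3g]$ summing to $3g$) also supply two justifications the paper leaves implicit: why $\delta(C,\widetilde{C})\le\max_i d(q_i,\widetilde{q}_i)$, and why the symmetric-gap computation controls the non-symmetric configuration.
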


\begin{proof}
We notice that $\delta(C,\widetilde{C})\leq\max\{d(q_2,\widetilde{q}_2),d(q_3,\widetilde{q}_3)\}$. We may assume that $d(q_2,\widetilde{q}_2)\geq d(q_3,\widetilde{q}_3)$. We denote the angle $\angle(q_2,p,\widetilde{q}_2)$ by $\varphi$. We can express $d(q_2,\widetilde{q}_2)$ using the law of sines in the right triangle $[p,m,q_2]$, where $m$ is the midpoint of $[q_2,\widetilde{q}_2]$. We know that $d(p,q_2)=w-\varrho_0-\eta$ and $\angle(q_2,p,m)=\frac{\varphi}{2}\leq\frac{c_2}{2}\eta$ from Lemma~\ref{lem:angles-are-close}. We divide the rest of the proof into three cases depending on the plane.

\begin{case}
$\M^2=\R^2$.
\end{case}

In this case,
\[
d(q_2,\widetilde{q}_2)=2(w-\varrho_0-\eta)\sin\frac{\varphi}{2}\leq(w-\varrho_0)\varphi\leq(w-\varrho_0)c_2\eta.
\]
\begin{case}
    $\M^2=\hyp^2$.
\end{case}

On the hyperbolic plane, we have
\[
d(q_2,\widetilde{q}_2)\leq2\sinh\frac{d}{2}=\sinh(w-\varrho_0-\eta)\sin\frac{\varphi}{2}\leq\sinh(w-\varrho_0)\frac{\varphi}{2}\leq\frac{\sinh(w-\varrho_0)c_2}{2}\eta.
\]

\begin{case}
    $\M^2=\sph^2$.
\end{case}

Finally, on the sphere,
\[
\sin\frac{d}{2}=\sin(w-\varrho_0-\eta)\sin\frac{\varphi}{2}\leq\sin\frac{\varphi}{2},
\]
and therefore $d\leq\varphi\leq c_2\eta$.
\end{proof}

The final ingredient of the proof of Theorem~\ref{thm:stab:isominwidth:spindleconvex} is the bound for the distance between $K$ and the cap-domain $C$.

\begin{lemma}\label{lem:distance-of-K-and-C}
Let $K$ be an $r$-ball convex body of minimal width $0<w\leq r$ and $\area(K)\leq\area(T_{w,r})+\varepsilon$ with $\varepsilon\in(0,\overline{\varepsilon})$. Let $C=B\cup C_1\cup C_2\cup C_3$ be a cap-domain with non-overlapping congruent caps $C_i$ peaks at $q_i$, where $d(q_i,p)=w-\varrho_0-\eta$. Then
$$
\delta(K,C)<c_4\varepsilon
$$
for some positive constant $c_4$.
\end{lemma}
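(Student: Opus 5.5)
Since $C\subseteq K$, we have $\delta(K,C)=\max_{x\in K}d(x,C)$. The plan is to show that any point $x\in K$ far from $C$ forces $K$ to carry a definite amount of extra area beyond $\area(C)$. Combined with $\area(C)\geq\area(Q_{w,r,\varrho})\geq\area(T_{w,r})$ (Lemma~\ref{lem:areaofhexagonsincreases}, using $Q_{w,r,\varrho}\subseteq C$ up to the rearrangement preserving area), this gives
\[
\area(K)-\area(C)\leq\area(K)-\area(T_{w,r})\leq\varepsilon.
\]
So it suffices to prove $\area(K\setminus C)\geq c\, d(x,C)$ for every $x\in K$, with $c>0$ depending only on $w,r$ and the space.

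Fix $x\in K$ with $h=d(x,C)>0$, and let $y\in\partial C$ be a nearest point of $C$ to $x$. Since $K$ is $r$-ball convex, the $r$-ball convex hull $\widehat C$ of $C\cup\{x\}$ lies in $K$, and it contains at least the convex hull of $x$ and $C$. The point $y$ lies on one of three kinds of boundary pieces of $C$:
\begin{enumerate}[(i)]
\item the free arcs of $\partial B$ between the caps;
\item the convex $r$-arcs of some cap $C_i$;
\item a neighbourhood of an apex $q_i$.
\end{enumerate}

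The key step is to find, in each case, a triangle with base a chord of $C$ through (or near) $y$ of length bounded below by a constant $\ell>0$, and apex $x$ at height comparable to $h$. Its area is then at least $c\,\ell h$, using the triangle area formulas of Lemma~\ref{lemma:triangle-area} and a linearization for small $h$; for large $h$ a crude uniform bound suffices, since $h$ is bounded by the diameter.

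In case (i), note that $x\notin B$ and that $B$ is the incircle. I would take the chord of $B$ perpendicular to $[p,x]$ at distance $\varrho/2$ from $p$, which has length comparable to $\varrho\geq\varrho_0$. In case (ii), the cap arcs have length bounded below uniformly, since $\eta\leq c_1\varepsilon$ is small and the caps are close to those of $\widetilde C$ by Lemma~\ref{lem:distance-of-capdomains}. So the supporting geodesic at $y$ meets $C$ in a region containing a chord of definite length.

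Case (iii) is the main obstacle. Near an apex the nearest-point region is an angular sector, and the base chord near $q_i$ shrinks. Here I would instead use the triangle $[x,s_{i,j},s_{i,k}]$, or rather the triangle with apex $x$ and base the segment from $q_i$ to a point of $C$ at fixed distance along the bisector. Because the interior angle of $C$ at $q_i$ is bounded away from $\pi$ (it is close to the corresponding angle of $T_{w,r}$), the distance from $x$ to the line of that segment is at least $c'h$.

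An alternative for (iii) that avoids area entirely is a width argument. A point $x$ beyond an apex would still be compatible with width $w$, so area is needed; the triangle argument stands.

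Collecting the three cases gives $\varepsilon\geq\area(K\setminus C)\geq c\,h$, hence $\delta(K,C)\leq c_4\varepsilon$ with $c_4=1/c$. The choice $\overline{\varepsilon}$ small handles the regime where $h$ is not small.
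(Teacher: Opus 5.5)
There is a genuine gap in the key step. A triangle whose base is a chord $[a,b]$ of $C$ overlaps $C$ in the region between $[a,b]$ and $\partial C$, so its area gives no lower bound on $\area(K\setminus C)$. In case (i), for instance, the chord at distance $\varrho/2$ from $p$ puts the triangle almost entirely inside $B$, and $x$ sits at height about $h+\varrho/2$ above that chord, not at height comparable to $h$.

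This cannot be repaired as long as you only use $K\supseteq\mathrm{conv}(C\cup\{x\})$. Suppose $y$ is an interior point of one of the arcs of $\partial C$; these arcs are curved at least as much as $r$-circles. Then the tangent geodesics from $x$ touch $\partial C$ at distance of order $\sqrt{h}$ from $y$. Hence $\area(\mathrm{conv}(C\cup\{x\})\setminus C)$ is only of order $h^{3/2}$ for small $h$, and the same order occurs when $x$ lies beyond an apex. Ordinary convexity therefore yields at best $\delta(K,C)=O(\varepsilon^{2/3})$, not a linear bound.

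Case (iii) has a further defect. When $x$ lies on the bisector beyond $q_i$, $x$ is on the line of your proposed base, so the triangle degenerates and the claimed lower bound $c'h$ on its distance to that line is false. This is the extremal configuration, and it is exactly the one the paper treats.

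The missing idea is to use $r$-ball convexity quantitatively, as the paper does. Take a farthest point $z$ of $K$ from $p$ with $q_1\in[p,z]$. Then $K$ contains the $r$-ball convex hull of $z$ and $s_{1,2}$. So the region bounded by $[q_1,z]$, the convex $r$-arc from $z$ to $s_{1,2}$, and the concave cap arc from $s_{1,2}$ to $q_1$ lies in $K\setminus C$, and its area is at most $\area(K)-\area(C)\le\varepsilon$.

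That area equals $\area([z,s_{1,2},q_1])$ plus the $r$-circular segment over $[z,s_{1,2}]$ minus the $r$-circular segment over $[q_1,s_{1,2}]$. Since $d(z,s_{1,2})>d(q_1,s_{1,2})$, it exceeds the triangle area. The triangle area is linear in $d(q_1,z)$, because $d(q_1,s_{1,2})>\varrho_0$ and the angle at $q_1$ stays away from $0$ and $\pi$. In other words, the concave boundary arc of $C$ is compensated by a convex arc of the same radius, which geodesic triangles over chords cannot achieve.

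Treating every $x\in K$ by nearest-point cases, rather than only a farthest point from $p$, is a sensible plan. However, each case needs an $r$-lens comparison of this type instead of triangles over chords, for instance lenses from $x$ to an apex or a tangency point. The free arcs can be handled this way too, since they have angular length $O(\eta)$ by Lemma~\ref{lem:angles-are-close}.
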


\begin{proof}
Let $K_1,K_2,K_3$ be the regions containing $C_1,C_2,C_3$, respectively, bounded by circular arcs of $\partial B$ and parts of $\partial K$. Let $z$ be a farthest point of $K$ from $p$. We may assume that $z\in K_1$. Moreover, by the ball convexity of $K$, we may assume $q_1\in[p,z]$. Then, it is enough to show that $d(z,q_1)\leq c_4\varepsilon$ for some positive constant $c_4$. If $q_1=z$, then there is nothing to prove, so we assume otherwise. Since $w-\varrho_0-\eta>\varrho_0+\eta$, $d(q_1,s_{1,2})>d(p,s_{1,2})>\varrho_0$ and $\angle(s_{1,3},q_1,s_{1,2})<\angle(s_{1,3},p,s_{1,2})<\frac{2\pi}{3}$. By our assumption $q_1\in[p,z]$, $\angle(s_{1,3},q_1,s_{1,2})+\angle(s_{1,3},q_1,z)=\angle(s_{1,3},q_1,s_{1,2})+\angle(s_{1,2},q_1,z)>\pi$ and $\angle(z,q_1,s_{1,2})=\angle(z,q_1,s_{1,3})>\frac{2\pi}{3}$. On the other hand, $\angle(q_1,z,s_{1,2})<\angle(s_{1,3},z,s_{1,2})\leq\angle(s_{1,3},q_1,s_{1,2})<\frac{2\pi}{3}$. This implies
\begin{equation}\label{eq:sides-of-triangle-zs12q1}
d(s_{1,2},q_1)<d(z,s_{1,2}).
\end{equation}
Since $K$ is $r$-ball convex, the $r$-segment determined by $z$ and $s_{1,2}$ (the $r$-ball convex hull of $z$ and $s_{1,2}$) is contained in $K$. In other words, the region $T$ bounded by $[q_1,z]$, a convex $r$-circular arc connecting $z$ and $s_{1,2}$, and a concave $r$-circular arc connecting $s_{1,2}$ and $q_1$ is contained in $K_1$. From \eqref{eq:sides-of-triangle-zs12q1} and our assumption,
\begin{equation}
\area([z,s_{1,2},q_1])<\area(T)<\varepsilon.
\end{equation}
The rest of the proof is divided into cases according to $\M^2$.

\setcounter{case}{0}
\begin{case}
    $\M^2=\R^2$.
\end{case}

In this case,
\[
\varepsilon>\area([z,s_{1,2},q_1])=\frac{d(q_1,s_{1,2})\cdot d(q_1,z)\sin(\angle(s_{1,2},q_1,z))}{2}>\frac{\sqrt{3}}{4}\varrho_0 d(q_1,z),
\]
and hence
\[
d(q_1,z)<\frac{4\sqrt{3}}{3\varrho_0}\varepsilon.
\]

\begin{case}
    $\M^2=\hyp^2$.
\end{case}

From the area formula of the hyperbolic triangle in Lemma \ref{lemma:triangle-area},
\[
\varepsilon>2\arccot\left(\frac{\coth\frac{d(s_{1,2},q_1)}{2}\coth\frac{d(z,q_1)}{2}-\cos\angle(s_{1,2},q_1,z)}{\sin\angle(s_{1,2},q_1,z)}\right)>2\arccot\left(\frac{\coth\frac{\varrho_0}{2}\coth\frac{d(z,q_1)}{2}+1}{\frac{\sqrt{3}}{2}}\right),
\]
which is equivalent to
\[
\tanh\frac{\varrho_0}{2}\left(\frac{\sqrt{3}}{2}\cot\frac{\varepsilon}{2}-1\right)<\coth\frac{d(z,q_1)}{2}.
\]
We can bound the left-hand side from below, where we also use $\varepsilon\in\left(0,\frac{3}{4}\right)$.
\[
\tanh\frac{\varrho_0}{2}\left(\frac{\sqrt{3}}{2}\cot\frac{\varepsilon}{2}-1\right)>\tanh\frac{\varrho_0}{2}\coth(2\varepsilon)>\coth\left(2\coth\frac{\varrho_0}{2}\varepsilon\right),
\]
and thus
\[
d(q_1,z)<4\coth\frac{\varrho_0}{2}\varepsilon.
\]

\begin{case}
    $\M^2=\sph^2$.
\end{case}

From the area formula in Lemma \ref{lemma:triangle-area}, we have
\[
\varepsilon>2\arccot\left(\frac{\cot\frac{d(s_{1,2},q_1)}{2}\cot\frac{d(z,q_1)}{2}+\cos\angle(s_{1,2},q_1,z)}{\sin\angle(s_{1,2},q_1,z)}\right)>2\arccot\left(\frac{\cot\frac{\varrho_0}{2}\cot\frac{d(z,q_1)}{2}-\frac{1}{2}}{\frac{\sqrt{3}}{2}}\right),
\]
or equivalently
\[
\tan\frac{\varrho_0}{2}\left(\frac{\sqrt{3}}{2}\cot\frac{\varepsilon}{2}+\frac{1}{2}\right)<\cot\frac{d(z,q_1)}{2}.
\]
We can bound the left-hand side from below, using $\varepsilon\in(0,1)$.
\[
\tan\frac{\varrho_0}{2}\left(\frac{\sqrt{3}}{2}\cot\frac{\varepsilon}{2}+\frac{1}{2}\right)>\tan\frac{\varrho_0}{2}\cot\varepsilon>\cot\left(\cot\frac{\varrho_0}{2}\varepsilon\right),
\]
and thus
\[
d(q_1,z)<2\cot\frac{\varrho_0}{2}\varepsilon.
\]
\end{proof}

Now we have all auxiliary lemmas to finish the proof.

\begin{proof}[Proof of Theorem~\ref{thm:stab:isominwidth:spindleconvex}]
By the choice of $\overline{\varepsilon}$, $\area(T_{w,r})<\area(K)\leq\area(T_{w,r})+\varepsilon<\area(B(p,\frac{w}{2}))$. Then, by Lemma~\ref{lem:nonoverlapping}, there is a cap-domain $C$ contained in $K$ where $C$ is the $r$-ball convex hull of the incircle $B$ of $K$ and three points $q_1,q_2,q_3$ that add three congruent, pairwise non-overlapping caps $C_1,C_2,C_3$ to $B$. Using Lemma~\ref{lem:boundingeta}, Lemma~\ref{lem:distance-of-capdomains}, Lemma~\ref{lem:distance-of-K-and-C} and the triangle inequality, we obtain
\[
\delta(K,\phi T_{w,r})\leq \delta(K,C)+\delta(C,\widetilde{C})+\delta(\widetilde{C},\phi T_{w,r})\leq c_4\varepsilon+c_3\eta+\eta\leq(c_4+c_1c_3+c_1)\varepsilon.
\]
\end{proof}

\section{Stability with respect to the symmetric difference metric}\label{sec:symmetric-difference}

In this section, we prove Theorem~\ref{thm:stab:symmetric-difference}. We note that for $\M^2=\R^2$, the limit case as $r\to\infty$ of the statement of the theorem was shown by Lucardesi and Zucco \cite{LZ24}. The general statement in $\R^2$ follows directly from Theorem~\ref{thm:stab:isominwidth:spindleconvex} and Groemer's inequality~\eqref{eq:groemer}. For the hyperbolic and spherical case, we use a similar argument to Groemer's \cite{G00}. First, we need the following non-Euclidean planar Steiner formulas (see, e.g., Santal\'o \cite{S04}). For more details on the spherical Steiner formula, see Schneider and Weil \cite{SW08}*{Section~6.5}.

\begin{lemma}\label{lem:steiner}
Let $K$ be a convex body in $\hyp^2$ or in $\sph^2$. For a positive number $\lambda$, the area of the parallel body $K^{(\lambda)}$ of $K$ with radius $\lambda$ (where in $\sph^2$, we also assume that $0\leq \lambda<\pi/2$) is
\begin{equation}
\area (K^{(\lambda)})=\begin{cases}
    \cosh\lambda \area (K)+\sinh\lambda \per (K)+2\pi(\cosh\lambda-1), &\text{if }\M^2=\hyp^2,\\
    \cos\lambda \area (K)+\sin\lambda \per (K)+2\pi(1-\cos\lambda), &\text{if }\M^2=\sph^2.
\end{cases}
\end{equation}
\end{lemma}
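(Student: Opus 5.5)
The statement is the classical non-Euclidean Steiner formula, and I plan to prove it first for convex polygons and then pass to general convex bodies by approximation. The model case is a single point. In $\hyp^2$ a disk of radius $\lambda$ has area $2\pi(\cosh\lambda-1)$, and in $\sph^2$ it has area $2\pi(1-\cos\lambda)$. This matches the formula with $\area(K)=\per(K)=0$ and fixes the constant term.

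\emph{Step 1 (polygons).} Let $P$ be a convex polygon with side lengths $\ell_1,\dots,\ell_m$ and exterior angles $\theta_1,\dots,\theta_m$. I decompose $P^{(\lambda)}\setminus P$ into two kinds of pieces. Over each side $i$ there is a ``band'': the set of points whose nearest point of $P$ lies in the relative interior of that side. Using Fermi coordinates $(s,t)$ along the side's geodesic, the area element is $\cosh t\,ds\,dt$ in $\hyp^2$ and $\cos t\,ds\,dt$ in $\sph^2$. Integrating over $s\in[0,\ell_i]$ and $t\in[0,\lambda]$ gives band area $\ell_i\sinh\lambda$, respectively $\ell_i\sin\lambda$. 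At each vertex there is a circular sector of radius $\lambda$ and angle $\theta_j$. Its area is $\theta_j(\cosh\lambda-1)$, respectively $\theta_j(1-\cos\lambda)$.

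\emph{Step 2 (Gauss--Bonnet for the angles).} The exterior angles do not sum to $2\pi$. Gauss--Bonnet gives $\sum\theta_j=2\pi+\area(P)$ in $\hyp^2$ and $\sum\theta_j=2\pi-\area(P)$ in $\sph^2$. Adding $\area(P)$, the band areas and the sector areas:
\begin{itemize}
\item in $\hyp^2$ the total is $\area(P)+\sinh\lambda\,\per(P)+(2\pi+\area(P))(\cosh\lambda-1)$, which equals the claimed expression;
\item in $\sph^2$ the analogous sum gives $\cos\lambda\,\area(P)+\sin\lambda\,\per(P)+2\pi(1-\cos\lambda)$.
\end{itemize}

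\emph{Step 3 (approximation).} Approximate a convex body $K$ in the Hausdorff metric by inscribed convex polygons $P_k\to K$. Area and perimeter are continuous on convex bodies. Also $P_k^{(\lambda)}\to K^{(\lambda)}$ in the Hausdorff metric, and the parallel bodies are again convex. In $\sph^2$ this uses $\lambda<\pi/2$ together with the fact that $K^{(\lambda)}$ stays in an open hemisphere for $K$ small enough; the lemma implicitly assumes this. Passing to the limit gives the formula for $K$.

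The main obstacle is Step 1: showing that the bands and sectors tile $P^{(\lambda)}\setminus P$ without overlap. This rests on the nearest-point projection onto a convex set being well defined and partitioning the complement by normal cones. In $\sph^2$ it additionally needs $\lambda<\pi/2$, so that the normal geodesics from a side do not refocus within distance $\lambda$. Since the paper cites Santal\'o and Schneider--Weil, I would treat this step as standard and mainly verify the Gauss--Bonnet bookkeeping.
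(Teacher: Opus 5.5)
Your Steps~1 and~2 are correct: the Fermi-coordinate bands, the vertex sectors, the normal-cone partition and the Gauss--Bonnet bookkeeping $\sum_j\theta_j=2\pi\pm\area(P)$ all check out. Step~3, however, rests on a false claim in $\sph^2$: parallel bodies of convex bodies are \emph{not} convex there, so continuity of area on convex bodies cannot give $\area(P_k^{(\lambda)})\to\area(K^{(\lambda)})$.

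Consider the part of $\partial P^{(\lambda)}$ lying over the relative interior of a side of $P$. It is an arc of the curve at distance $\lambda$ from that side's great circle. This curve is a small circle of radius $\frac{\pi}{2}-\lambda$ about the outer pole, and $P^{(\lambda)}$ lies on the side of it away from that pole. Its geodesic curvature with respect to the inner normal is therefore $-\tan\lambda<0$. The great-circle arc joining two nearby points of this boundary arc passes through points at distance $>\lambda$ from $P$. Hence $P^{(\lambda)}$ is never convex for $\lambda>0$. More generally, for smooth $\partial K$ of curvature $k$, the parallel curve has curvature $\frac{k-\tan\lambda}{1+k\tan\lambda}$, which is negative wherever $k<\tan\lambda$.

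Your extra condition that $K^{(\lambda)}$ lie in an open hemisphere does not cure this. It is also not part of the lemma, which holds for every convex body in $\sph^2$ and every $\lambda\in[0,\frac{\pi}{2})$. In $\hyp^2$ parallel bodies of convex sets are convex, so your Step~3 is fine there.

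The repair needs only the polygon case. Take inscribed polygons $P_k\subseteq K$ with $\delta_k=\delta(P_k,K)\to0$. The triangle inequality gives $P_k^{(\lambda)}\subseteq K^{(\lambda)}\subseteq P_k^{(\lambda+\delta_k)}$. Apply your polygon formula at the radii $\lambda$ and $\lambda+\delta_k<\frac{\pi}{2}$ (for $k$ large). Letting $k\to\infty$ and using $\area(P_k)\to\area(K)$ and $\per(P_k)\to\per(K)$, both bounds converge to $\cos\lambda\,\area(K)+\sin\lambda\,\per(K)+2\pi(1-\cos\lambda)$. This needs neither convexity of $K^{(\lambda)}$ nor any hemisphere condition on it. With this change your argument is a complete, self-contained proof; the paper gives none and only cites Santal\'o and Schneider--Weil.
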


We also use the following.

\begin{lemma}\label{lem:maximal-perimeter}
Let $K\subset\M^2$ be a convex body of diameter at most $D$. Then
\begin{equation}
\per(K)\leq\begin{cases}
    \per \left (B\big(z,\frac{D}{2}\big)\right ), &\text{if }\M^2=\R^2 \text{ or }\M^2=\hyp^2,\\
    \per (T), &\text{if }\M^2=\sph^2,
\end{cases}
\end{equation}
with equality if and only if $K$ is congruent to $B\big(z,\frac{D}{2}\big)$ if $\M^2=\R^2$ or $\M^2=\hyp^2$, and $K$ is congruent to $T$ if $\M^2=\sph^2$, where $T$ denotes a regular spherical triangle of diameter (height) $D$ if $D\geq\frac{\pi}{2}$ and a Reuleaux triangle of diameter $D$ if $D<\frac{\pi}{2}$.
\end{lemma}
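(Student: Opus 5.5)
This is a maximal-perimeter (isodiametric-type) inequality for perimeter. I would prove it by reducing to bodies of constant width and then applying Barbier-type results together with the Blaschke--Lebesgue theorem.

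\emph{Step 1: reduce to constant width.} Every convex body $K$ of diameter at most $D$ is contained in a convex body $W$ of constant width $D$; in $\hyp^2$ and $\sph^2$ this is the completion to a complete body. Perimeter is monotone under inclusion of convex bodies in all three planes. One way to see this is via Cauchy--Crofton: in $\R^2$, $\hyp^2$ and $\sph^2$ the perimeter is a constant multiple of the measure of lines (great circles) meeting the body. Hence $\per(K)\le\per(W)$, and equality forces $K=W$, because a strictly larger convex body is met by a set of lines of positive measure that miss the smaller one. So it suffices to maximize $\per$ over bodies of constant width $D$.

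\emph{Step 2: the case $\R^2$.} Barbier's theorem gives $\per(W)=\pi D$ for every body $W$ of constant width $D$. This equals $\per(B(z,D/2))$, so the inequality holds. For the equality case, equality in Step~1 forces $K=W$ with $W$ of constant width. The ``if and only if'' statement must then be read through the Euclidean convention: every body of constant width $D$ attains the bound. I would state this as the Euclidean case of the lemma, with the ball among the equality cases, and not push the uniqueness claim beyond Barbier.

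\emph{Step 3: the curved planes.} In $\hyp^2$ and $\sph^2$, Barbier fails, and perimeter and area are coupled. For a body $W$ of constant width $D$, the Gauss--Bonnet theorem combined with a Steiner-type relation gives an identity of the form
\[
\per(W)=
\begin{cases}
a(D)-b(D)\area(W), & \text{on }\sph^2,\\
a(D)+b(D)\area(W), & \text{on }\hyp^2,
\end{cases}
\]
with $b(D)>0$. To derive it, take the inner parallel body at distance $D/2$, or apply Steiner's formula from Lemma~\ref{lem:steiner} to $W^{(\lambda)}$. The key fact is that $W$ and its ``dual'' at distance $D$ cover each other in a controlled way, namely $W^{(D)}$ minus the polar-type construction. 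On $\sph^2$ the perimeter is therefore maximized exactly when the area is minimized. Over bodies of constant width this minimizer is the Reuleaux triangle if $D<\pi/2$ and the polar of a Reuleaux triangle if $D\ge\pi/2$ (\cite{Bez21}, \cite{FS}); the latter is the regular triangle of height $D$, which gives $T$. On $\hyp^2$ the sign is reversed, so one instead needs the area maximizer among bodies of constant width. That is the disc, by the isoperimetric-type (Blaschke) maximality of the ball among constant width sets.

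\emph{Main obstacle.} The hardest step is Step 3: deriving the Barbier-type identity precisely and with the correct sign in each curved geometry. I would try the support-function and Crofton approach first. A body of constant width $D$ intersects lines at chord-width $D$, and integrating the Gauss--Bonnet formula over $W$ and its outer normal offset yields the linear relation. If that route fails in $\hyp^2$, I would fall back on Cauchy--Crofton directly. There, perimeter is proportional to the measure of lines hitting $W$, and this measure can be compared to that of the ball $B(z,D/2)$ via symmetrization, which increases it while preserving the diameter bound.
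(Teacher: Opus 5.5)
\paragraph{The gap: Step~1 fails on $\sph^2$ for $D>\frac{\pi}{2}$.} There, a convex body of diameter at most $D$ need not lie in any convex body of constant width $D$, and the extremal body of the statement is itself a counterexample. Let $T$ be the regular triangle of side $a\in(\frac{\pi}{2},\frac{2\pi}{3})$.
\begin{itemize}
\item Its height $D$ satisfies $\cos D=\cos a/\cos\frac{a}{2}<\cos a$, so $D>a$ and $D=\diam T$, as in the statement.
\item $T$ is not of constant width. The lune bounded by the great circles through two sides supports $T$ and has breadth equal to the angle $\alpha$ of $T$. Moreover $\cos\alpha/\cos D=1/(2\cos\frac{a}{2})\in(0,1)$ with both cosines negative, so $w(T)\le\alpha<D$.
\item $T$ beats every body of constant width $D$. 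If $W$ has constant width $D$, then $W^\circ$ has constant width $\pi-D<\frac{\pi}{2}$. Hence $\per(W)=2\pi-\area(W^\circ)\le 2\pi-\area(R)$, where $R$ is the Reuleaux triangle of width $\pi-D$ \cite{Bez21}. For $a=100^\circ$ this bound is about $5.169$, while $\per(T)=\frac{5\pi}{3}\approx 5.236$.
\end{itemize}
By monotonicity of perimeter, $T$ lies in no body of constant width $D$.

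Carried out correctly, your Steps~1--3 would therefore output $R^\circ$ as the maximizer, which is false. You reach the stated answer only through a second error: identifying the polar of a Reuleaux triangle with the regular triangle of height $D$. In fact $R^\circ$ is the convex hull of three disks of radius $D-\frac{\pi}{2}$ centred at the vertices of a regular triangle of side $\pi-D$. It has three geodesic sides, three circular arcs and constant width $D$, and it equals $T$ only when $D=\frac{\pi}{2}$.

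\paragraph{The missing idea and the rest of your plan.} You need to handle the diameter constraint on $\sph^2$ without passing to constant width. The paper does this by polarity: $\per(K)=2\pi-\area(K^\circ)$ and $w(K^\circ)=\pi-\diam K$ \cite{Bez00}. So maximizing the perimeter under $\diam K\le D$ becomes the spherical P\'al problem for $K^\circ$ at minimal width $\pi-D$. Its minimizer is the regular triangle if $\pi-D\le\frac{\pi}{2}$ \cite{Bez00} and the polar of a Reuleaux triangle otherwise \cite{FS}; polarizing back gives $T$. Since P\'al's problem ranges over all bodies of given minimal width, no completion is needed.

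The rest of your plan is sound.
\begin{itemize}
\item In $\hyp^2$ it is essentially the paper's argument: completion, which has constant width by \cite{BoCsS}; Santal\'o's Barbier formula \cite{S45}; and the isodiametric inequality. Cite Santal\'o rather than rely on your sketch of the identity, which is not a derivation.
\item On $\sph^2$ with $D<\frac{\pi}{2}$, where complete bodies do have constant width, your route is a valid alternative to the paper's polarity argument and yields the Reuleaux triangle. It uses completion, the spherical Barbier identity $\per(W)=(2\pi-\area(W))\tan\frac{D}{2}$, and Bezdek's Blaschke--Lebesgue theorem.
\item Your remark about $\R^2$ is correct. Every body of constant width $D$ has perimeter $\pi D$, so the uniqueness claim fails there, and the paper's proof does not treat $\R^2$.
\end{itemize}
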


\begin{proof}
By the monotonicity of the perimeter, it is clear that if a convex body $K$ has a maximal perimeter among convex bodies of diameter at most $D$, then $\diam K=D$.

\setcounter{case}{0}
\begin{case}
$\M^2=\hyp^2$.
\end{case}

Let $K'$ be a completion of $K$ (i.e., a convex set $K'\supset K$ such that $\diam K'=\diam K$ and $\diam K'\cup\{y\}>\diam K'$ for any point $y$). Then $\per (K')\geq\per (K)$ by the monotonicity of the perimeter. 
It is known that complete bodies and bodies of constant width are the same in hyperbolic space (see B\"or\"oczky, Cs\'epai and Sagmeister \cite{BoCsS}), therefore $K'$ has constant width $D$. The hyperbolic version of Barbier's theorem (see, for example, Santal\'o \cite{S45}*{(7.3) on p. 411)} states that
\begin{equation}\label{eq:barbier-hyperbolic}
    \per (K')=(2\pi+\area (K'))\tanh \left (\frac D2\right ).
\end{equation}
We note that Santal\'o \cite{S45} uses a different width function for the hyperbolic version of Barbier's theorem, but for complete bodies both width functions have constant value that is equal to the diameter of the body (see B\"or\"oczky, Cs\'epai and Sagmeister \cite{BoCsS}). Thus, by \eqref{eq:barbier-hyperbolic} the perimeter of $K'$ is maximal if and only if the area of $K$ is maximal. 
According to the hyperbolic isodiametric inequality (see Schmidt \cites{Sch48,Sch49}), the  area of a convex set of diameter $D$ in $\hyp^2$ is maximal precisely when the set is a ball of radius $D/2$. Combining this with \eqref{eq:barbier-hyperbolic}, we get the statement of the lemma. 

\begin{case}
$\M^2=\sph^2$.
\end{case}

It is known (see Bezdek and Blekherman \cite{Bez00}) that the polar $K^\circ$ of a spherical convex body $K$ with diameter $D$ has minimal width $\pi-D$. Furthermore, $\per (K)=2\pi-\area (K^\circ)$. Therefore, $K$ has maximal perimeter with a fixed diameter $D$ if and only if $K^\circ$ has minimal area with a fixed minimal width $\pi-D$. Hence, $K^\circ$ is a regular triangle of minimal width $\pi-D$ if $\pi-D\leq\frac{\pi}{2}$, and $K^\circ$ is the polar of a Reuleaux triangle otherwise (see Freyer and Sagmeister \cite{FS}). This concludes the proof.
\end{proof}

We have the following inequality between the Hausdorff metric and the symmetric difference metric.

\begin{lemma}\label{lem:noneuclidean-groemer}
Let $K$ and $L$ be convex bodies in $\M^2$. Let $D=\max\{\diam K,\diam L\}$. Then
\begin{equation}
\Delta_{\hyp^2}(K,L)<
    4\pi\Big(\cosh\frac{D}{2}(\cosh\delta(K,L)-1)+\sinh\frac{D}{2}\sinh\delta(K,L)\Big)
\end{equation}
and
\begin{equation}
\Delta_{\sph^2}(K,L)<\begin{cases}
    4\pi(1-\cos\delta(K,L))+12\sin\delta(K,L)\sin D \arcsin\frac{1}{2\cos\frac{D}{2}}, &\text{if }D<\frac{\pi}{2},\\
    4\pi(1-\cos\delta(K,L))+12\sin\delta(K,L)\arcsin\frac{-\cos D+\sqrt{\cos^2 D+8}}{4}, &\text{if }\frac{\pi}{2}\leq D<\pi.
\end{cases}
\end{equation}
\end{lemma}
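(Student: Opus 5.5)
Write $\lambda=\delta(K,L)$. The plan is to follow Groemer's argument. From the characterization of the Hausdorff distance via parallel domains, $K\subseteq L^{(\lambda)}$ and $L\subseteq K^{(\lambda)}$. Hence $K\cup L\subseteq K^{(\lambda)}\cap L^{(\lambda)}$, so $(K\cup L)\setminus K\subseteq K^{(\lambda)}\setminus K$ and $(K\cup L)\setminus L\subseteq L^{(\lambda)}\setminus L$. Since $\Delta(K,L)=\area(K\setminus L)+\area(L\setminus K)$, we obtain
\[
\Delta(K,L)\leq \big(\area(K^{(\lambda)})-\area(K)\big)+\big(\area(L^{(\lambda)})-\area(L)\big).
\]
Each bracket is then evaluated with the Steiner formula of Lemma~\ref{lem:steiner}. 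In $\hyp^2$,
\[
\area(K^{(\lambda)})-\area(K)=(\cosh\lambda-1)(\area(K)+2\pi)+\sinh\lambda\per(K),
\]
and in $\sph^2$,
\[
\area(K^{(\lambda)})-\area(K)=(1-\cos\lambda)(2\pi-\area(K))+\sin\lambda\per(K).
\]

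In the spherical case, drop $-\area(K)\le 0$; summing over $K$ and $L$ gives the term $4\pi(1-\cos\lambda)$. The perimeters are then bounded via Lemma~\ref{lem:maximal-perimeter}, which gives $\per(K),\per(L)\leq\per(T)$, with $T$ the Reuleaux triangle of diameter $D$ when $D<\frac{\pi}{2}$ and the regular triangle of height $D$ otherwise. It remains to compute $\per(T)$ explicitly and check that $2\per(T)$ equals the displayed coefficient of $\sin\lambda$.
\begin{itemize}
\item For the Reuleaux triangle: it has three arcs of radius $D$, each of length $\sin D$ times the central angle. The angle of the equilateral spherical triangle of side $D$ is $2\arcsin\frac{1}{2\cos(D/2)}$, from $\sin(\alpha/2)=\sin(D/2)/\sin D$. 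So $\per=6\sin D\arcsin\frac{1}{2\cos\frac D2}$.
\item For the regular triangle of height $D$: solve for the side length from the height using spherical trigonometry. I expect this to yield the expression $\arcsin\frac{-\cos D+\sqrt{\cos^2D+8}}{4}$ for half a side, so that $\per=6\arcsin(\cdots)$.
\end{itemize}
Strict inequality should come from the non-strict Steiner step together with the fact that $\area(K)>0$ is dropped.

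In the hyperbolic case, bound $\per(K)\le \per(B(z,D/2))=2\pi\sinh\frac D2$ by Lemma~\ref{lem:maximal-perimeter}. For the area, use the hyperbolic isodiametric inequality (already invoked in that proof): $\area(K)\le\area(B(z,D/2))=2\pi(\cosh\frac D2-1)$, so $\area(K)+2\pi\le 2\pi\cosh\frac D2$. Summing the two bodies gives
\[
\Delta(K,L)\le 4\pi\Big(\cosh\tfrac D2(\cosh\lambda-1)+\sinh\tfrac D2\sinh\lambda\Big).
\]
Strictness follows since equality in Lemma~\ref{lem:maximal-perimeter} forces both bodies to be balls of diameter $D$; but then the set inclusions used above are not tight, as the differences $K^{(\lambda)}\setminus K$ overlap with $L^{(\lambda)}\setminus L$ unless $\lambda=0$, where both sides vanish.

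The main obstacle is the routine but fiddly spherical trigonometry giving $\per(T)$ in closed form for the regular triangle of height $D$, and matching it exactly to the stated constant. If the stated expression resists matching, I would verify it numerically at $D=\frac{\pi}{2}$, where both formulas should agree, before redoing the height-to-side relation $\sin(\text{height})=\sin(\text{side})\sin(\text{angle})$.
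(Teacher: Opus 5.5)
Your argument follows the paper's proof step for step: the bound $\area(L\setminus K)\le\area(K^{(\lambda)})-\area(K)$ and its symmetric counterpart, the Steiner formulas, the isodiametric inequality with Lemma~\ref{lem:maximal-perimeter} in $\hyp^2$, and dropping the $\area(K)$ term in $\sph^2$. Your Reuleaux perimeter is also right.

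The gap is the step you flagged for $\frac{\pi}{2}\le D<\pi$, and it is not merely fiddly: the identity you plan to verify is false. Let $a$ be the side of the regular triangle $T$ of height $D$. Spherical Pythagoras in half of $T$ gives $\cos a=\cos D\cos\frac a2$, i.e.\ $2\cos^2\frac a2-\cos D\cos\frac a2-1=0$, so
\[
\cos\frac a2=\frac{\cos D+\sqrt{\cos^2 D+8}}{4},\qquad \per(T)=6\arccos\frac{\cos D+\sqrt{\cos^2 D+8}}{4}.
\]
This coincides with $6\arcsin\frac{-\cos D+\sqrt{\cos^2 D+8}}{4}$ only when $\cos D=0$. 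So your planned check at $D=\frac{\pi}{2}$ cannot detect the problem. By contrast, as $D\to\pi$ the true perimeter tends to $2\pi$ while the displayed expression tends to $3\pi$. Thus ``$2\per(T)$ equals the displayed coefficient'' fails on all of $(\frac{\pi}{2},\pi)$. The paper's proof asserts the same equality.

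The lemma survives because only an upper bound is needed. Set $u=\cos D$, $s=\sqrt{u^2+8}$, $c=\frac{u+s}{4}$ and $e=\frac{s-u}{4}$. Then $c^2+e^2=1+\frac{u^2}{4}\ge 1$, hence $\sin\frac a2=\sqrt{1-c^2}\le e<1$. It follows that $\frac a2\le\arcsin e$ and $\per(K)\le\per(T)\le 6\arcsin e$. Replacing your equality check by this inequality closes the spherical case, with a non-sharp constant for $D>\frac{\pi}{2}$.

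There are three smaller points.

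First, for $K=L$ both sides vanish, so the strict inequality can only hold when $\delta(K,L)>0$. Your remark ``unless $\lambda=0$'' concedes this; the statement, like the paper's proof, should be read with $K\neq L$.

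Second, for $\lambda>0$ in $\hyp^2$ your reason for strictness is misplaced. Overlap of $K^{(\lambda)}\setminus K$ with $L^{(\lambda)}\setminus L$ is irrelevant, because the two estimates bound the disjoint sets $L\setminus K$ and $K\setminus L$ separately. The correct reason is this: if $K$ and $L$ are disks of radius $\frac{D}{2}$ with centers at distance $\lambda$, then the point of $\partial K^{(\lambda)}$ farthest from the center of $L$ has positive distance from $K\cup L$. Hence $\area(K\cup L)<\area(K^{(\lambda)})$.

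Third, the spherical Steiner formula is only stated for $\lambda<\frac{\pi}{2}$. When $\delta(K,L)\ge\frac{\pi}{2}$ the spherical bound is trivial, since its right-hand side is at least $4\pi>\area(K)+\area(L)\ge\Delta(K,L)$.
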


\begin{proof}
We note that $\Delta_{\M^2}(K,L)=\area (K\cup L)-\area (K\cap L)=(\area (K)-\area (K\cap L))+(\area (L)-\area (K\cap L))$. We bound $\area (L)-\area (K\cap L)$ from above.
\begin{equation}\label{eq:symdif}
\area (L)-\area (K\cap L)=\area (K\cup L)-\area (K)\leq\area (K^{(\delta(K,L))})-\area (K).
\end{equation}
The rest of the proof is divided into two cases depending on $\M^2$.
\setcounter{case}{0}
\begin{case}
$\M^2=\hyp^2$.
\end{case}
According to the hyperbolic Steiner formula~\ref{lem:steiner},
\begin{equation}
\area (K^{(\delta(K,L))})-\area (K)=(\cosh\delta(K,L)-1)(\area (K)+2\pi)+\sinh\delta(K,L)\per (K).
\end{equation}
Since $\area (K)\leq\area \left (B\big(0,\frac{D}{2}\big)\right )=2\pi\big(\cosh\frac{D}{2}-1\big)$ from the hyperbolic isodiametric inequality and $\per (K)\leq\per \left (B\big(0,\frac{D}{2}\big)\right )=2\pi\sinh\frac{D}{2}$ from Lemma~\ref{lem:maximal-perimeter}, we obtain
\begin{equation}
\area (L)-\area (K\cap L)\leq 2\pi\Big(\cosh\frac{D}{2}(\cosh\delta(K,L)-1)+\sinh\frac{D}{2}\sinh\delta(K,L)\Big)
\end{equation}
from \eqref{eq:symdif}. We can give the same bound for $\area (K)-\area (K\cap L)$, and hence
\begin{equation}
\Delta(K,L)\leq 4\pi\Big(\cosh\frac{D}{2}(\cosh\delta(K,L)-1)+\sinh\frac{D}{2}\sinh\delta(K,L)\Big).
\end{equation}
Equality could hold only if $K$ and $L$ are both circles of radius $\frac{D}{2}$, but then $\area(K\cup L)$ and $\area\left( K^{(\delta(K,L))}\right)$ can be equal only if $K=L$.
\begin{case}
$\M^2=\sph^2$.
\end{case}
From the spherical Steiner formula~\ref{lem:steiner}, we get
\begin{equation}
\area \left (K^{(\delta(K,L))}\right )-\area (K)=(1-\cos\delta(K,L))(2\pi-\area (K))+\sin\delta(K,L)\per (K).
\end{equation}

Again, we use Lemma~\ref{lem:maximal-perimeter} to bound $\per (K)$ from above. Let $T$ be a regular triangle of diameter (height) $D$ if $D\geq\frac{\pi}{2}$, and a Reuleaux triangle of diameter $D$ if $D<\frac{\pi}{2}$. Then
\begin{equation}
\per (K)\leq\per (T)=\begin{cases}
    6\sin D \arcsin\frac{1}{2\cos\frac{D}{2}}, &\text{if }D<\frac{\pi}{2},\\
    6\arcsin\frac{-\cos D+\sqrt{\cos^2 D+8}}{4}, &\text{if }\frac{\pi}{2}\leq D<\pi.
\end{cases}
\end{equation}
Thus,
\begin{equation}
\area (L)-\area (K\cap L)<2\pi(1-\cos\delta(K,L))+6\sin\delta(K,L)\sin D \arcsin\frac{1}{2\cos\frac{D}{2}}
\end{equation}
if $D<\frac{\pi}{2}$, and
\begin{equation}
\area (L)-\area (K\cap L)<2\pi(1-\cos\delta(K,L))+6\sin\delta(K,L)\arcsin\frac{-\cos D+\sqrt{\cos^2 D+8}}{4}
\end{equation}
if $\frac{\pi}{2}\leq D<\pi$. Using the same estimates for $\area (K)-\area (K\cap L)$, we obtain the bounds
\begin{equation}
\Delta(K,L)<\begin{cases}
    4\pi(1-\cos\delta(K,L))+12\sin\delta(K,L)\sin D \arcsin\frac{1}{2\cos\frac{D}{2}}, &\text{if }D<\frac{\pi}{2},\\
    4\pi(1-\cos\delta(K,L))+12\sin\delta(K,L)\arcsin\frac{-\cos D+\sqrt{\cos^2 D+8}} {4}, &\text{if }\frac{\pi}{2}\leq D<\pi.
\end{cases}
\end{equation}
\end{proof}

Using Taylor expansions in the estimates of Lemma~\ref{lem:noneuclidean-groemer} for the trigonometric and hyperbolic functions in terms of $\delta(K,L)$, we obtain the statements of Lemma~\ref{lem:groemer-noneuclidean-taylor}.


Now we are ready to prove Theorem~\ref{thm:stab:symmetric-difference}.
\begin{proof}[Proof of Theorem~\ref{thm:stab:symmetric-difference}]
Let $K$ be an $r$-ball convex body in $\M^2$ with $\area (K)=\area (T_{w,r})+\varepsilon$ with $\varepsilon\in (0,\overline{\varepsilon})$. Note that the $r$-ball convexity of $K$ implies that $\diam K\leq 2r$. If $\M^2=\R^2$, then the theorem is a direct consequence of Theorem~\ref{thm:stab:isominwidth:spindleconvex} and \eqref{eq:groemer}. For $\M^2=\hyp^2$ and $\M^2=\sph^2$, the result follows from Theorem~\ref{thm:stab:isominwidth:spindleconvex} and Lemma~\ref{lem:groemer-noneuclidean-taylor}.
\end{proof}

\section{Optimality of Theorems~\ref{thm:stab:isominwidth:spindleconvex} and \ref{thm:stab:symmetric-difference}}\label{sec:optimality}

Finally, we show that the order of $\varepsilon$ is optimal in Theorem~\ref{thm:stab:isominwidth:spindleconvex}. Consider the $r$-disk triangle $T_{w,r}$ with vertices $v_1,v_2,v_3$, and let $v(x)$ be a point such that $v_3$ is contained in the shorter $r$-arc between $v_2$ and $v(x)$ and $d(v_3,v(x))=x$; see Figure~\ref{fig:extended-triangle}. Let $T_\varepsilon$ be the $r$-disk triangle with vertices $v_1,v_2,v(x)$. We set $x$ to be the unique value such that $\area(T_\varepsilon)=\area(T_{w,r})+\varepsilon$. We assume that $\varepsilon$ is sufficiently small so that $x<1$.

\begin{figure}[H]
\centering
\begin{tikzpicture}[line cap=round,line join=round,>=triangle 45,x=1cm,y=1cm,scale=2.5]
\clip(-1.2660254037844387,-0.71) rectangle (1.4961130857528036,1.15);
\coordinate (v1) at (0,1);
\coordinate (v2) at (-0.8660254037844387,-0.5);
\coordinate (v3) at (0.8660254037844384,-0.5);
\coordinate (vx) at (1.0961130857528036,-0.3641897618388592);
\draw [shift={(0,1.230018015984799)},line width=0.8pt]  plot[domain=4.248271477124485:5.176506483644894,variable=\t]({1*1.9346737026258407*cos(\t r)+0*1.9346737026258407*sin(\t r)},{0*1.9346737026258407*cos(\t r)+1*1.9346737026258407*sin(\t r)});
\draw [shift={(0,1.230018015984799)},line width=0.8pt]  plot[domain=5.176506483644894:5.314716891766352,variable=\t]({1*1.9346737026258412*cos(\t r)+0*1.9346737026258412*sin(\t r)},{0*1.9346737026258412*cos(\t r)+1*1.9346737026258412*sin(\t r)});
\draw [shift={(-1.0652268489553698,-0.6150090079923993)},line width=0.8pt]  plot[domain=0.059481272338094374:0.987716278858503,variable=\t]({1*1.9346737026258405*cos(\t r)+0*1.9346737026258405*sin(\t r)},{0*1.9346737026258405*cos(\t r)+1*1.9346737026258405*sin(\t r)});
\draw [shift={(1.0652268489553693,-0.6150090079923999)},line width=0.8pt]  plot[domain=2.15387637473129:3.0821113812516985,variable=\t]({1*1.934673702625841*cos(\t r)+0*1.934673702625841*sin(\t r)},{0*1.934673702625841*cos(\t r)+1*1.934673702625841*sin(\t r)});
\draw [shift={(-0.7970378284382158,-0.762865007784338)},line width=0.8pt]  plot[domain=0.2075554848346729:1.1461754836509392,variable=\t]({1*1.934673702625841*cos(\t r)+0*1.934673702625841*sin(\t r)},{0*1.934673702625841*cos(\t r)+1*1.934673702625841*sin(\t r)});
\begin{scriptsize}
\draw [fill=black] (0,1) circle (1pt);
\draw [fill=black] (-0.8660254037844387,-0.5) circle (1pt);
\draw [fill=black] (0.8660254037844384,-0.5) circle (1pt);
\draw [fill=black] (1.0961130857528036,-0.3641897618388592) circle (1pt);
\node[above] at (v1) {$v_1$};
\node[below left] at (v2) {$v_2$};
\node[below right] at (v3) {$v_3$};
\node[right] at (vx) {$v(x)$};
\end{scriptsize}
\end{tikzpicture}
\caption{The $r$-disk triangle $T_\varepsilon$}
\label{fig:extended-triangle}
\end{figure}

First, we notice that $T_\varepsilon$ has minimal width $w$. Indeed, the minimal width of $T_\varepsilon$ is at least $w$, since $T_\varepsilon\supset T_{w,r}$. On the other hand, $w(T_\varepsilon,\ell)=w$ where $\ell$ is the supporting line at the midpoint $m_1$ of the $r$-arc between $v_2$ and $v_3$ (see \cite{FRS26}*{Lemma 2.2}).

Clearly, Figure~\ref{fig:extended-triangle} shows the optimality of Theorem~\ref{thm:stab:symmetric-difference}.

We also observe that the diameter of $T_\varepsilon$ is
\[
\diam T_\varepsilon=d(v_2,v(x))<d(v_2,v_3)+d(v_3,v(x))=\diam T_{w,r}+x.
\]
We also bound the diameter of $T_\varepsilon$ from below.

\begin{lemma}\label{lem:diameterbound}
For a sufficiently small $\varepsilon$,
\[
\diam T_\varepsilon\geq\diam T_{w,r}+\frac{x}{2}.
\]
\end{lemma}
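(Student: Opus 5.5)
We need to show $d(v_2,v(x))\ge d(v_2,v_3)+x/2$ for small $x$. Note that $\diam T_\varepsilon\ge d(v_2,v(x))$ trivially, so it suffices to bound $d(v_2,v(x))$ from below. The plan is to treat $g(x)=d(v_2,v(x))$ as a smooth function of the arc length parameter $x$ along the $r$-circle through $v_2$, $v_3$ and $v(x)$. We then show $g'(0)>\frac12$, from which a first-order Taylor expansion gives $g(x)\ge g(0)+\frac{x}{2}$ for all sufficiently small $x$, i.e.\ for sufficiently small $\varepsilon$. Here $x\to0$ as $\varepsilon\to0$, since the area of $T_\varepsilon$ is continuous and strictly increasing in $x$. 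Finally $g(0)=d(v_2,v_3)$, and this equals $\diam T_{w,r}$: the diameter of the $r$-disk triangle is attained between two vertices, by symmetry and since $w\le r$ (this also follows from the description in \cite{FRS26}).

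The derivative $g'(0)$ is computed by the first variation of distance. $g'(0)=\cos\theta$, where $\theta$ is the angle at $v_3$ between the unit tangent vector of the $r$-arc (pointing away from $v_2$) and the direction of the geodesic $[v_2,v_3]$ extended beyond $v_3$. The chord $[v_2,v_3]$ of an $r$-circle makes equal angles with the arc at both endpoints. So $\theta$ is the angle $\beta$ between the chord $[v_2,v_3]$ and the tangent of the circle at $v_3$, which is half the central angle in the Euclidean case. In general, $\beta$ is determined by the isosceles triangle with apex the center $c$ of the circle, legs $r$ and base $s=d(v_2,v_3)$: the tangent is perpendicular to $[c,v_3]$, so $\beta=\frac{\pi}{2}-\angle(c,v_3,v_2)$. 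In $\R^2$ this gives $\sin\beta=\frac{s}{2r}$. In $\hyp^2$ it gives $\sin\beta=\frac{\tanh(s/2)}{\tanh r}$ via right-triangle trigonometry on the half-isosceles triangle, and in $\sph^2$ it gives $\sin\beta=\frac{\tan(s/2)}{\tan r}$.

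The main obstacle is verifying $\cos\beta>\frac12$, i.e.\ $\beta<\frac{\pi}{3}$, which amounts to $\sin\beta<\frac{\sqrt3}{2}$. For this we bound $s$. In $T_{w,r}$, the centers $o_1,o_2,o_3$ of the three defining $r$-disks form a regular triangle, and $v_2,v_3$ both lie on the circle of radius $r$ about $o_1$. Moreover $v_2,v_3$ lie on the opposite side of the center from the arc they span, since that arc is a side of a convex $r$-disk triangle and its central angle is less than $\pi$. More directly, $s=d(v_2,v_3)\le\diam$ of an $r$-ball convex body, which is at most $r$ in this configuration. This holds because each vertex $v_2$ is the center-opposite point with $d(v_2,o_2)$... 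To avoid fiddling, I would instead use that $v_2$ and $v_3$ both lie in the $r$-disk about $o_1$ with central angle at most $\frac{\pi}{3}$. That angle equals the angle subtended by the side, which is smaller than the angle $\angle(o_2,o_1,o_3)\le\frac{\pi}{3}$ in $\R^2,\hyp^2$; the spherical case needs a small separate check using $r<\frac{\pi}{2}$. An isosceles triangle with legs $r$ and apex angle below $\frac{\pi}{3}$ in $\R^2$ or $\hyp^2$ has base angles above $\frac{\pi}{3}$, hence $\beta=\frac{\pi}{2}-\text{base angle}<\frac{\pi}{6}$. In $\sph^2$, since $r<\frac{\pi}{2}$, the angle excess is small, and I would verify $\beta<\frac{\pi}{3}$ from $\sin\beta=\tan(s/2)/\tan r$ with $s<r$. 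Then $g'(0)=\cos\beta>\frac12$, and the expansion $g(x)=g(0)+g'(0)x+O(x^2)$ yields $\diam T_\varepsilon\ge g(x)\ge\diam T_{w,r}+\frac{x}{2}$ for small $\varepsilon$.
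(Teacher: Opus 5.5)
The paper states this lemma without proof, so there is nothing to compare against. Your reduction is sound as far as it goes. You use $\diam T_\varepsilon\ge d(v_2,v(x))$, and the first variation gives $g'(0)=\cos\beta$ with $\sin\beta=\frac{s}{2r}$, $\frac{\tanh(s/2)}{\tanh r}$, $\frac{\tan(s/2)}{\tan r}$ in $\R^2,\hyp^2,\sph^2$. In $\R^2$ the bound $\angle(v_2,o_1,v_3)\le\frac{\pi}{3}$ (with equality for $r=w$) gives $\beta\le\frac{\pi}{6}$.

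\textbf{The hyperbolic step fails.} An isosceles triangle in $\hyp^2$ with legs $r$ and apex angle below $\frac{\pi}{3}$ need \emph{not} have base angles above $\frac{\pi}{3}$, because the angle sum is less than $\pi$. The base angle $\alpha=\frac{\pi}{2}-\beta$ satisfies $\cos\alpha=\tanh(s/2)/\tanh r$, which tends to $1$ as $s$ and $r$ grow.

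This cannot be patched. For $r=w$, $T_{w,r}$ is the hyperbolic Reuleaux triangle, with $o_1=v_1$ and $s=r$. Then $\sin\beta=\frac{\cosh r}{1+\cosh r}$, so $\cos\beta<\frac12$ as soon as $\cosh r>3+2\sqrt3$; for $r=w=3$ one gets $\cos\beta\approx 0.415$. Carrying out the first variation over all diametral pairs of the Reuleaux triangle (a vertex together with a point of the opposite arc) gives $\diam T_\varepsilon=\diam T_{w,r}+x\cos\beta+o(x)$. So in that case the inequality with $\frac{x}{2}$ is false for small $x$, and no argument can give the constant $\frac12$ uniformly in $\hyp^2$.

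\textbf{What your argument does prove.} In all three planes it gives $\diam T_\varepsilon\ge\diam T_{w,r}+c\,x$ for small $x$, with for instance $c=\frac12\cos\beta$. This constant is positive because $s\le r<2r$, and it is all the optimality corollary needs, with $\frac{x}{4}$ replaced by $\frac{c}{2}x$.

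\textbf{Closing the spherical case.} You leave this case as ``I would verify'', and the bound $s\le r$ you start and abandon is exactly what closes it. In the configuration behind Lemma~\ref{lem:rho0} one has $d(p,v_i)=w-\varrho_0$ and $d(p,o_1)=r-\varrho_0<\frac{\pi}{2}$, with $\angle(v_2,p,o_1)=\frac{2\pi}{3}$. The law of cosines then gives $\cos s=F(w-\varrho_0)$ and $\cos r=F(r-\varrho_0)$, where $F(t)=\cos(w-\varrho_0)\cos t-\frac12\sin(w-\varrho_0)\sin t$. Since $F$ is decreasing on $(0,\frac{\pi}{2})$, this yields $s\le r$. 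Hence $\sin\beta\le\frac{\tan(r/2)}{\tan r}=\frac{1-\tan^2(r/2)}{2}<\frac12$, that is, $\cos\beta>\frac{\sqrt3}{2}$. The hyperbolic analogue of this computation also gives $s\le r$, but only $\sin\beta\le\frac{\cosh r}{1+\cosh r}$, which is not bounded away from $1$, consistent with the counterexample above.
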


The following statement follows easily from the triangle inequality.

\begin{lemma}
Let $X,Y\subset\M^n$ be non-empty compact sets with $\diam X\leq\diam Y$. Then,
\[
\frac{1}{2}(\diam Y-\diam X)\leq\delta(X,Y).
\]
\end{lemma}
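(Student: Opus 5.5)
The statement to prove is the last lemma: for non-empty compact sets $X,Y\subset\M^n$ with $\diam X\leq\diam Y$, one has $\frac12(\diam Y-\diam X)\leq\delta(X,Y)$. The plan is a direct triangle-inequality argument; nothing from the earlier sections is needed beyond the definition \eqref{def:Hausdorff} of the Hausdorff distance and compactness.

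First, set $\lambda=\delta(X,Y)$. By compactness of $Y$, the diameter is attained, so there are points $y_1,y_2\in Y$ with $d(y_1,y_2)=\diam Y$. By the definition of the Hausdorff distance, $d(y_i,X)\leq\lambda$ for $i=1,2$. Since $X$ is compact, the infimum in $d(y_i,X)$ is attained, so we can pick $x_i\in X$ with $d(y_i,x_i)\leq\lambda$.

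Next, apply the triangle inequality for the geodesic metric $d$ on $\M^n$:
\[
\diam Y=d(y_1,y_2)\leq d(y_1,x_1)+d(x_1,x_2)+d(x_2,y_2)\leq 2\lambda+\diam X .
\]
Rearranging gives $\diam Y-\diam X\leq 2\delta(X,Y)$, which is the claim.

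There is no real obstacle here. The only points to note are that $d$ is a genuine metric on each $\M^n$ (on $\sph^n$ this is the angular metric, and the triangle inequality holds), and that compactness guarantees the relevant maxima and minima are attained. Even without compactness, one could replace the chosen points by $\epsilon$-approximations and let $\epsilon\to0$. The hypothesis $\diam X\leq\diam Y$ only serves to make the left-hand side nonnegative; it is not used in the argument.
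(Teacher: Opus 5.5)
Your proof is correct and is the same argument the paper has in mind: it gives no written proof and only notes that the statement ``follows easily from the triangle inequality.'' Your chain $\diam Y\le d(y_1,x_1)+d(x_1,x_2)+d(x_2,y_2)\le 2\delta(X,Y)+\diam X$, with compactness used to attain the diameter and the point-to-set distances, supplies exactly that argument.
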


\begin{lemma}\label{lem:epsilon-and-x}
There is a positive constant $c_5$ such that $\varepsilon<c_5 x$.
\end{lemma}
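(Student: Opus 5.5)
We want $\varepsilon=\area(T_\varepsilon)-\area(T_{w,r})=O(x)$. We bound the added region $T_\varepsilon\setminus T_{w,r}$ by an explicit set whose area is linear in $x$.

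\emph{Step 1: describe the difference set.} $T_{w,r}$ is bounded by three $r$-arcs: $\arc{v_1v_2}$, $\arc{v_2v_3}$, $\arc{v_3v_1}$. In $T_\varepsilon$, the arc $\arc{v_1v_2}$ is unchanged. The side through $v_2$ and $v_3$ is simply extended along the same $r$-circle to $v(x)$. The side $\arc{v_3v_1}$ is replaced by the $r$-arc $\arc{v(x)v_1}$. Hence $T_\varepsilon\setminus T_{w,r}$ is the region bounded by the short arc $\arc{v_3v(x)}$ of length about $x$, the new arc $\arc{v(x)v_1}$, and the old arc $\arc{v_3v_1}$. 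It is therefore contained in the symmetric difference of the two $r$-segments (lens halves) cut off by the chords $[v_1,v_3]$ and $[v_1,v(x)]$, together with the geodesic triangle $[v_1,v_3,v(x)]$.

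\emph{Step 2: bound each piece.} For the triangle, I apply Lemma~\ref{lemma:triangle-area} (or the Euclidean formula $\frac12 ab\sin\gamma$) with sides $d(v_1,v_3)$ and $d(v_3,v(x))\le x$. This gives area at most $c\,x$, since $d(v_1,v_3)\le 2r$ is bounded and $\cot(A/2)$ grows like $1/x$ as $x\to0$. For the segments, the area of the circular segment $\area(S_0(t))$ from Lemma~\ref{lem:area-of-circular-segment} is a continuously differentiable function of $t$ on a compact subinterval of $(0,2r)$. Indeed, $t$ stays near $d(v_1,v_3)<2r$, using $w\le r$ and, on the sphere, $r<\pi/2$. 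Thus $|\area(S_0(d(v_1,v(x))))-\area(S_0(d(v_1,v_3)))|\le c'\,|d(v_1,v(x))-d(v_1,v_3)|\le c'x$ by the triangle inequality.

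\emph{Step 3: the simpler alternative route.} A cleaner approach avoids the segment bookkeeping. All of $T_\varepsilon\setminus T_{w,r}$ lies within distance $x$ of $T_{w,r}$. This holds because $T_\varepsilon$ is the $r$-ball convex hull of $T_{w,r}\cup\{v(x)\}$, and every point of the new boundary arc $\arc{v(x)v_1}$ is close to the old arc; the continuity of the arc in its endpoint gives a distance of $O(x)$. Hence $T_\varepsilon\subset T_{w,r}^{(c x)}$. Then by the Steiner formula (Lemma~\ref{lem:steiner}, or its Euclidean version), $\varepsilon\le \area(T_{w,r}^{(cx)})-\area(T_{w,r})\le c''x$ for $x<1$. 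The main obstacle in this route is justifying the Hausdorff-type bound $\delta(T_\varepsilon,T_{w,r})=O(x)$ uniformly. I would do this by parametrizing the centre of the $r$-circle through $v_1$ and $v(x)$. This centre depends smoothly (Lipschitz) on $v(x)$ as long as $d(v_1,v(x))<2r$ stays bounded away from $2r$. The arc then moves by $O(x)$. Combining Step 3 (or Steps 1–2) gives $\varepsilon<c_5x$.
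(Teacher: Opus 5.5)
Your Step 3 is a correct proof by a route different from the paper's. Your Steps 1--2, however, do not close as written; the repair is exactly the paper's argument.

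\textbf{Steps 1--2.} There are two problems.

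First, $T_\varepsilon\setminus T_{w,r}$ also contains the small segment $S_0(x)$ that the chord $[v_3,v(x)]$ cuts off from the $r$-disk carrying the side through $v_2$ and $v_3$. It lies on the far side of that chord from $v_1$. So it is covered neither by the triangle $[v_1,v_3,v(x)]$ nor by the segments $A$, $B$ over $[v_1,v_3]$ and $[v_1,v(x)]$.

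Second, and more seriously, containment in $(A\triangle B)\cup[v_1,v_3,v(x)]$ only bounds $\varepsilon$ by $\area(A\triangle B)+\dots$. But $\area(A\triangle B)\ge|\area(A)-\area(B)|$, and the inequality is strict here because neither segment contains the other. So the Lipschitz estimate for $\area(S_0(\cdot))$ in Step 2 does not bound what your containment produces.

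What works is the exact signed decomposition the paper uses. On the side of the geodesic through $v_1,v_3$ not containing $v_2$, the body $T_\varepsilon$ is the union of the triangle, $S_0(x)$ and $B$, and this part contains $A$. On the other side, $T_\varepsilon$ and $T_{w,r}$ coincide. Hence
\[
\varepsilon=\area\bigl([v_1,v_3,v(x)]\bigr)+\area\bigl(S_0(x)\bigr)+\area\bigl(S_0(d(v_1,v(x)))\bigr)-\area\bigl(S_0(d(v_1,v_3))\bigr).
\]
With this identity your Step 2 finishes the proof. The remaining difference from the paper is minor: the paper bounds the triangle together with $S_0(x)$ by the sector $S(x)$ via the Lexell curve. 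You bound the triangle directly from Lemma~\ref{lemma:triangle-area}, which is equally valid, and then $\area(S_0(x))\le\area(S(x))=O(x)$ handles the extra piece.

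\textbf{Step 3.} Let $c_0$ and $c_x$ be the centres of the $r$-circles carrying the old side from $v_3$ to $v_1$ and the new side from $v(x)$ to $v_1$.
\begin{itemize}
\item Since $d(v_1,v_3)<2r$, the two $r$-circles around $v_1$ and $v(x)$ meet transversally. So $c_x$ depends smoothly on $v(x)$ and $d(c_0,c_x)=O(x)$. Continuity alone would only give $o(1)$; it is this Lipschitz bound that matters.
\item This gives $\partial T_\varepsilon\subset T_{w,r}^{(cx)}$.
\item Passing to $T_\varepsilon\subset T_{w,r}^{(cx)}$ needs a sentence, e.g.\ convexity of the parallel domain. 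On $\sph^2$ this is not automatic for convex bodies. It does hold here, because $\partial T_{w,r}^{(cx)}$ consists of circular arcs of radii $r+cx<\frac{\pi}{2}$ and $cx$.
\end{itemize}

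You can even bypass the Steiner formula. Let $D_1,D_2$ be the $r$-disks carrying the two sides through $v_2$. Then $T_{w,r}=B(c_0,r)\cap D_1\cap D_2$ and $T_\varepsilon\subseteq B(c_x,r)\cap D_1\cap D_2$. Hence
\[
T_\varepsilon\setminus T_{w,r}\subseteq B(c_x,r)\setminus B(c_0,r)\subseteq B(c_0,r+d(c_0,c_x))\setminus B(c_0,r),
\]
an annulus of area $O(x)$ in all three planes.

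Compared with the paper, this route avoids the exact decomposition, the Lexell curve and the segment-area formulas, and treats the three geometries uniformly. In exchange, the paper's computation gives explicit first-order coefficients for $c_5$, whereas yours hides them in the Lipschitz constant of $c_x$.
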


\begin{proof}
Fix an $r$-disk $B=B(o,r)\subset\M^2$ and a point $z\in\partial B$. Denote the circular sector of $B$ centered at $o$ and chord $[z,z(t)]$ of length $t\in [0,2r]$ by $S(t)$ where $o,z,z(t)$ have a positive orientation, and denote the circular segment defined by $S(t)$ by $S_0(t)$. We observe that $d(v_1,v_3)<d(v_1,v(x))\leq r$. We call the Lexell curve $\mathcal{L}$ of the triangle $[v_1,v_3,v(x)]$ with respect to the side $[v_3,v(x)]$ the locus of points $z$ with $z$ on the same half-plane bounded by the line through $v_3$ and $v(x)$ such that $\area([v_1,v_3,v(x)])=\area([z,v_3,v(x)])$. It is known that $\mathcal{L}$ is the line through $v_1$ parallel to $[v_3,v(x)]$ if $\M^2=\R^2$, it is a hypercycle (an equidistant curve to a line) with the same ideal points as $[v_3,v(x)]$ for $\M^2=\hyp^2$, and it is a circular arc if $\M^2=\sph^2$. As a consequence, we can always find a point $z_0$ on $\mathcal{L}$ such that $d(z_0,v_3)=d(z_0,v(x))$, and in this case the lengths of the legs of the isosceles triangle $[z_0,v_3,v(x)]$ are between $d(v_1,v_3)$ and $d(v_1,v(x))$. Hence, the area of the region bounded by the $r$ arc between $v_3$ and $v(x)$ and by the segments $[v(x),v_1]$ and $[v_1,v_3]$ can be bounded from above by the area of a circular sector $S(x)$. The area $\varepsilon$ of the region bounded by the $r$-arcs between $v_1$ and $v_3$, between $v_3$ and $v(x)$ and between $v(x)$ and $v_1$ can be expressed as the area of the region bounded by the $r$ arc between $v_3$ and $v(x)$ and by the segments $[v(x),v_1]$ and $[v_1,v_3]$ increased by the area difference between $S_0(d(v_1,v(x))$ and $S_0(d(v_1,v_3)$, thus
\[
\varepsilon<\area(S(x))+\area(S_0(d(v_1,v(x)))-\area(S_0(d(v_1,v_3)).
\]
Let $\gamma(t)$ be the central angle of $S(t)$. From the law of cosines,
\[
\cos\gamma(t)=\begin{cases}
    1-\frac{t^2}{2r^2},&\text{if }\M^2=\R^2,\\
    \frac{\cosh^2 r-\cosh^2 t}{\sinh^2 t},&\text{if }\M^2=\hyp^2,\\
    \frac{\cos^2 t-\cos^2 r}{\sin^2 t},&\text{if }\M^2=\sph^2.
\end{cases}
\]
Hence,
\begin{equation}\label{eq:area-of-sector}
\area(S(x))=\begin{cases}
    \frac{r^2\gamma(x)}{2}=\frac{r}{2}x+O(x^3),&\text{if }\M^2=\R^2,\\
    (\cosh r-1)\gamma(x)=\frac{\cosh r-1}{\sinh r}x+O(x^3),&\text{if }\M^2=\hyp^2,\\
    (1-\cos r)\gamma(x)=\frac{1-\cos r}{\sin r}x+O(x^3),&\text{if }\M^2=\sph^2.
\end{cases}
\end{equation}
We notice that $\area(S_0(t))$ increases in $t$ and that $d(v_1,v(x))<d(v_1,v_3)+x$ from the triangle inequality, hence $\area(S_0(d(v_1,v(x)))-\area(S_0(d(v_1,v_3))\leq\area(S_0(d(v_1,v_3)+x)-\area(S_0(d(v_1,v_3))$. 
Therefore,
\begin{equation}\label{eq:difference-between-circular-segments}
\area(S_0(d(v_1,v_3)+x)-\area(S_0(d(v_1,v_3))=\begin{cases}
    \frac{d(v_1,v_3)^2}{2\sqrt{4r^2-d(v_1,v_3)^2}}x+O(x^2),&\text{if }\M^2=\R^2,\\
    \frac{\cosh r\sinh d(v_1,v_3)\tanh^2\frac{d(v_1,v_3)}{2}}{\sqrt{\sinh^4 r-(\cosh^2 r-\cosh d(v_1,v_3))}}x+O(x^2),&\text{if }\M^2=\hyp^2,\\
    \frac{2\sqrt{2}\cos r\sin^3\frac{d(v_1,v_3)}{2}}{\sqrt{\cos d(v_1,v_3)-\cos(2r)}}x+O(x^2),&\text{if }\M^2=\sph^2
\end{cases}
\end{equation}
from Lemma~\ref{lem:area-of-circular-segment}. The statement of the lemma follows from \eqref{eq:area-of-sector} and \eqref{eq:difference-between-circular-segments}.
\end{proof}

\begin{corollary}
Theorem~\ref{thm:stab:isominwidth:spindleconvex} is optimal.
\end{corollary}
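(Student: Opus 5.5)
The plan is to show that for the family $T_\varepsilon$ constructed above, the Hausdorff distance to every congruent copy of $T_{w,r}$ is bounded below by a constant times $\varepsilon$. Theorem~\ref{thm:stab:isominwidth:spindleconvex} gives an upper bound of order $\varepsilon$, so the order cannot be improved. Each $T_\varepsilon$ is an admissible competitor: it is an $r$-disk triangle, hence $r$-ball convex, and it was shown above to have minimal width exactly $w$. By construction $\area(T_\varepsilon)=\area(T_{w,r})+\varepsilon$.

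Fix an arbitrary isometry $\phi$. Since $\diam \phi T_{w,r}=\diam T_{w,r}\le \diam T_\varepsilon$, the triangle-inequality lemma stated just before Lemma~\ref{lem:epsilon-and-x} gives
\[
\delta(T_\varepsilon,\phi T_{w,r})\geq \tfrac12\bigl(\diam T_\varepsilon-\diam T_{w,r}\bigr).
\]
By Lemma~\ref{lem:diameterbound}, this is at least $\tfrac{x}{4}$ once $\varepsilon$ is sufficiently small. By Lemma~\ref{lem:epsilon-and-x}, $x>\varepsilon/c_5$. Combining these,
\[
\inf_\phi \delta(T_\varepsilon,\phi T_{w,r})\geq \frac{\varepsilon}{4c_5}.
\]
So no bound of the form $\delta\le c\,\varepsilon^{\alpha}$ with $\alpha>1$ can hold for all small $\varepsilon$. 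This is exactly the claimed optimality. The same family also certifies optimality for the symmetric difference, since $\Delta(T_\varepsilon,T_{w,r})=\varepsilon$.

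I also need to check that $x\to0$ as $\varepsilon\to0$, so that the "sufficiently small" hypotheses of Lemmas~\ref{lem:diameterbound} and \ref{lem:epsilon-and-x} apply. This should follow from continuity and strict monotonicity of $x\mapsto\area(T_\varepsilon)$, since $T_\varepsilon$ grows strictly as $v(x)$ moves along the arc.

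The substantive input is Lemma~\ref{lem:diameterbound}, which is used here but not yet proved. If I had to supply it, I would expand $d(v_2,v(x))$ to first order in $x$ using the law of cosines in the appropriate geometry. The derivative at $x=0$ is the cosine of the angle between the chord $[v_2,v_3]$ and the tangent of the $r$-arc at $v_3$, and I expect this angle to be small, giving a derivative above $1/2$. Verifying that this derivative exceeds $\tfrac12$ uniformly in $w\le r$ (and $r<\pi/2$ on $\sph^2$) is the step I expect to be the main obstacle.
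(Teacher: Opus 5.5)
Your argument is the paper's proof: for every isometry $\phi$, $\delta(T_\varepsilon,\phi T_{w,r})\geq\frac12(\diam T_\varepsilon-\diam T_{w,r})\geq\frac x4>\frac{\varepsilon}{4c_5}$, by the diameter--Hausdorff lemma, Lemma~\ref{lem:diameterbound} and Lemma~\ref{lem:epsilon-and-x}. On your last paragraph, the uniform factor $\frac12$ you hope for actually fails in $\hyp^2$: for $w=r$ the first-order coefficient of $d(v_2,v(x))$ is $\sin\alpha$, where $\alpha$ is the angle of the equilateral triangle of side $r$, i.e.\ $\cos\alpha=\frac{\cosh r}{\cosh r+1}$, and $\sin\alpha<\frac12$ as soon as $\cosh r>3+2\sqrt3$; however, this coefficient is always positive, and any bound $\diam T_\varepsilon\geq\diam T_{w,r}+c\,x$ with $c>0$ gives the corollary, changing only the final constant.
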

\begin{proof}
Consider the $r$-disk triangle $T_\varepsilon$ defined as above. Then, for an arbitrary $\phi\colon\M^2\to\M^2$ isometry,
\[
\delta(T\varepsilon,\phi T_{w,r})\geq\frac{1}{2}(\diam T_\varepsilon-\diam T_{w,r})>\frac{x}{4}>\frac{1}{4c_5}\varepsilon
\]
from Lemma~\ref{lem:diameterbound} and Lemma~\ref{lem:epsilon-and-x}.
\end{proof}

\section*{Acknowledgments}
This research was supported by NKFIH projects no.~150151 and no.~150613. Projects no.~150151 and no.~150613 have been implemented with the support provided by the Ministry of Culture and Innovation of Hungary from the National Research, Development and Innovation Fund, financed under the ADVANCED\_24 funding scheme.

The second author received funding from project no. 2024-1.2.8-T\'ET-IPARI-CN-2025-00011,
with the support provided by the Ministry of Innovation and Technology
of Hungary from the National Research, Development and Innovation Fund.

This research was also supported by project TKP2021-NVA-09. Project no. TKP2021-NVA-09 has been implemented with the support provided by the Ministry of Innovation and Technology of Hungary from the National Research, Development and Innovation Fund, financed under the TKP2021-NVA funding scheme.

\bibliography{bibliography}
\bibliographystyle{abbrv}

\end{document}